\newtheorem{theorem}{Theorem}  
\newtheorem{lemma}[theorem]{Lemma}
\newtheorem{proposition}[theorem]{Proposition}
\newtheorem{corollary}[theorem]{Corollary}
\newtheorem{remar}[theorem]{Remark}
\newenvironment{proof}{Proof:\ \ \ }{\QED}
\newenvironment{remark}{\begin{remar}\rm}{\end{remar}}
\newcommand{\QED}{{\unskip\nobreak\hfil\penalty50%
\hskip1em\hbox{}\nobreak\hfil $\Box$%
\parfillskip=0pt \finalhyphendemerits=0 \par\medskip\noindent}}
\newcommand{\bfind}[1]{\index{#1}{\bf #1}}
\newcommand{\n}{\par\noindent}
\newcommand{\sn}{\par\smallskip\noindent}
\newcommand{\mn}{\par\medskip\noindent}
\newcommand{\bn}{\par\bigskip\noindent}
\newcommand{\pars}{\par\smallskip}
\newcommand{\parm}{\par\medskip}
\newcommand{\parb}{\par\bigskip}
\newcommand{\co}{\mbox{\rm co}\,}
\newcommand{\ovl}[1]{\overline{#1}}
\newcommand{\ul}[1]{\underline{#1}}
\newcommand{\iT}{^{[i]}}
\newcommand{\Reg}{\mbox{\rm Reg}\,}
\newcommand{\subsetuneq}{\mathrel{\raisebox{.8ex}{\footnotesize%
$\displaystyle\mathop{\subset}_{\not=}$}}}
\newcommand{\toradresse}{\par\bigskip \small\rm
 Mathematical Sciences Group, 
 University of Saskatchewan, \par
 106 Wiggins Road, 
 Saskatoon, Saskatchewan, Canada S7N 5E6 \par
 email: fvk@math.usask.ca \ \ --- \ \ home page:
 http://math.usask.ca/$\,\tilde{ }\,$fvk/index.html}
\font\tenlv=msbm10 scaled 1200
\font\sevenlv=msbm7 scaled 1200
\font\fivelv=msbm5 scaled 1200
\def\lv #1{{\mathchoice{{\hbox{\tenlv #1}}}{{\hbox{\tenlv #1}}}
{{\hbox{\sevenlv #1}}}{{\hbox{\fivelv #1}}}}}
\newcommand{\N}{\lv N}
\newcommand{\R}{\lv R}
\newcommand{\Z}{\lv Z}
\newcommand{\F}{\lv F}
\begin{document}
\begin{center}
{\large\bf Maps on ultrametric spaces, Hensel's Lemma, and differential
equations over valued fields\footnote{I thank Lou van den
Dries for inviting me to Urbana and for making the manuscript [D]
available to me. This manuscript was the main inspiration for the
results of Section~\ref{sectde}. I also thank Florian Pop for the
key idea in the proof of Theorem~\ref{multhens}, and the referee for his
thorough reading, many useful suggestions, and the proof of
Lemma~\ref{D1=0}. This paper has undergone a major revision during my
stay at the Newton Institute at Cambridge; I gratefully
acknowledge its support.}}\\[.3cm]
{\it Franz-Viktor Kuhlmann}\\[.3cm]
15.~4.~2009
\end{center}
\begin{quote}
\small\rm
{\bf Abstract.}
We give a criterion for maps on ultrametric spaces to be
surjective and to preserve spherical completeness. We show how Hensel's
Lemma and the multi-dimensional Hensel's Lemma follow from our result.
We give an easy proof that the latter holds in every henselian field. We
also prove a basic infinite-dimensional Implicit Function Theorem.
Further, we apply the criterion to deduce various versions of Hensel's
Lemma for polynomials in several additive operators, and to give a
criterion for the existence of integration and solutions of certain
differential equations on spherically complete valued differential
fields, for both valued D-fields in the sense of Scanlon, and
differentially valued fields in the sense of Rosenlicht. We modify the
approach so that it also covers logarithmic-exponential power series
fields. Finally, we give a criterion for a sum of spherically complete
subgroups of a valued abelian group to be spherically complete. This in
turn can be used to determine elementary properties of power series
fields in positive characteristic.
\end{quote}

{\footnotesize\rm
\tableofcontents
}
%
%
\section{Introduction}
Hensel's Lemma (see Theorem~\ref{HL}) is an important tool in the theory
of valued fields. In recent years, it has witnessed several
generalizations. For example, such generalizations are important when
the valued fields are enriched by additional structure like derivations.
But attempts have also been made to formulate Hensel's Lemma in
situations with less structure. For instance, forgetting about
multiplication one may consider valued abelian groups or modules.
Another interesting case is that of a non-commutative multiplication.

In view of these developments, it is logical to ask for the underlying
principle that makes Hensel's Lemma work. This principle should be
formulated using as little algebraic structure as possible so that one
can derive new versions of Hensel's Lemma by adding whatever structure
one is interested in.

It has turned out that the structure suitable for such an
underlying principle is that of ultrametric spaces. In [P2], S.\
Prie{\ss}-Crampe proved an ultrametric Fixed Point Theorem. This theorem
works with contracting maps, and indeed the Newton algorithm used to
prove Hensel's Lemma for the field of $p$-adic numbers readily provides
such a map. But in other situations, contracting maps are not always
instantly available. For example, if one looks for zeros of polynomial
maps on a valued field, it can be more convenient to directly study
the ultrametric properties of these maps. The problem could then be
solved by showing surjectivity of such maps when restricted to suitable
subsets of the field. Our Ultrametric Main Theorem (Theorem~\ref{MT}) is
of this nature.

In the next section, we give a quick introduction to the facts about
ultrametric spaces that are necessary to understand the Ultrametric Main
Theorem. In Section~\ref{sectappl} we will then give a summary of the
various applications that are derived in this paper.

%
%
\subsection{The Ultrametric Main Theorem} \label{sectuv}
Let $(Y,u)$ be an ultrametric space. That is, $u$ is a map from $Y\times
Y$ onto a totally ordered set $\Gamma$ with last element $\infty$,
satisfying that for all $x,y,z\in Y$,\sn
(U1) \ $u(y,z)=\infty$ \ if and only if \ $y=z$,\n
(U2) \ $u(y,z)\geq\min\{u(y,x),u(x,z)\}$ \ \ \
(ultrametric triangle law),\n
(U3) \ $u(y,z)=u(z,y)$ \ \ \ (symmetry).\sn
It follows that\sn
$\bullet \ u(y,z)>\min\{u(y,x),u(x,z)\}\>\Rightarrow\>u(y,x)=u(x,z)$,\n
$\bullet \ u(y,x)\ne u(x,z)\>\Rightarrow\>u(y,z)=
\min\{u(y,x),u(x,z)\}$.\sn
We will use these properties freely.
We set $uY:=\{u(y,z)\mid y,z\in Y,y\ne z\}=\Gamma\setminus\{\infty\}$
and call it the \bfind{value set of} $(Y,u)$.
\pars
We recall some definitions. For $y\in Y$ and $\alpha\in uY\cup
\{\infty\}$, we define the \bfind{closed ball} around $y$ with radius
$\alpha$ as follows:
\[B_{\alpha}(y)\>:=\> \{z\in Y\mid u(y,z)\geq\alpha\}\;.\]
To facilitate notation, we will also use
\[B(x,y)\>:=\>B_{u(x,y)}(x)\;.\]
It follows from the ultrametric triangle law that $B_{u(x,y)}(x)=
B_{u(x,y)}(y)$ and that $B(x,y)$ is the smallest closed ball containing
$x$ and $y$. Similarly, it follows from the ultrametric triangle law that
\begin{equation}                            \label{umball}
B(x,y)\subseteq B(z,t) \ \ \mbox{ if and only if } \ \
x\in B(z,t) \mbox{ and } u(x,y)\geq u(z,t)\;.
\end{equation}
(Note: the bigger $u(x,y)$, the closer $x$ and $y$; this is compatible
with the Krull notation of valuations.)

A \bfind{ball} is the union of any non-empty collection of closed balls
which contain a common element. If $B_1$ and $B_2$ are balls with
non-empty intersection, then $B_1\subseteq B_2$ or $B_2\subseteq B_1\,$.

A set of balls in $(Y,u)$ is called a \bfind{nest of balls} if it is
totally ordered by inclusion; this is the case as
soon as every two balls in the set have a nonempty intersection. The
\bfind{intersection} of the nest is defined to be the intersection of
all of its balls. If it is non-empty, then it is again a ball.

The ultrametric space $(Y,u)$ is called \bfind{spherically complete} if
every nest of balls has a nonempty intersection. It is well known and
easy to prove that this holds if and only if every nest of closed balls
has a nonempty intersection. If $(Y,u)$ is spherically complete and $B$
is a ball in $Y$, then also $(B,u)$ is spherically complete.

\parm
Let $(Y,u)$ and $(Y',u')$ be non-empty ultrametric spaces and $f:\;Y
\rightarrow Y'$ a map. For $y\in Y$, we will write $fy$ instead of
$f(y)$. An element $z'\in Y'$ is called \bfind{attractor for $f$}
if for every $y\in Y$ such that $z'\ne fy$, there is an element $z\in Y$
which satisfies:
\sn
{\bf (AT1)} \ $u'(fz,z')>u'(fy,z')$,\n
{\bf (AT2)} \ $f(B(y,z))\subseteq B(fy,z')$.
\sn
Condition (AT1) says that the approximation $fy$ of $z'$ from within the
image of $f$ can be improved, and condition (AT2) says that this can be
done in a somewhat continuous way.

The following are our main theorems.

\begin{theorem}                             \label{MTattr}
Assume that $z'\in Y'$ is an attractor for $f:\;Y \rightarrow Y'$ and
that $(Y,u)$ is spherically complete. Then $z'\in f(Y)$.
\end{theorem}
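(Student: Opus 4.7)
The plan is to argue by contradiction. Assume $z' \notin f(Y)$, so every $y \in Y$ has $fy \neq z'$, and by the axiom of choice fix a function $g \colon Y \to Y$ such that $g(y)$ witnesses the attractor conditions (AT1) and (AT2) for each $y$. Construct a transfinite sequence $(y_\alpha)$ by taking $y_0 \in Y$ arbitrary, setting $y_{\alpha+1} := g(y_\alpha)$ at successor stages, and at a limit $\lambda$ selecting $y_\lambda$ from $\bigcap_{\beta<\lambda} B(y_\beta, y_{\beta+1})$; spherical completeness of $(Y,u)$ will supply a point once the nest property of these balls is verified.

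The construction is governed by two inductive properties: (i) $\alpha \mapsto u'(fy_\alpha,z')$ is strictly increasing, and (ii) $y_\alpha \in B(y_\beta,y_{\beta+1})$ for every $\beta < \alpha$. At successor steps, (i) is just (AT1); at a limit $\lambda$, combining (ii) with $f(B(y_\beta,y_{\beta+1})) \subseteq B(fy_\beta,z')$ (from (AT2)) and applying (AT1) at the intermediate index $\beta+1 < \lambda$ upgrades the resulting non-strict inequality to a strict one. Property (ii) at a limit is immediate from the choice of $y_\lambda$, while at a successor $\alpha+1$ it reduces to showing that $B(y_\alpha,y_{\alpha+1}) \subseteq B(y_\beta,y_{\beta+1})$ for each $\beta<\alpha$.

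This containment is the heart of the argument, and the main obstacle. Since both balls contain $y_\alpha$, they are comparable; we must exclude the reverse strict containment. If $B(y_\alpha,y_{\alpha+1}) \supsetneq B(y_\beta,y_{\beta+1})$, then $y_\beta \in B(y_\alpha,y_{\alpha+1})$, and (AT2) applied at $y_\alpha$ with the witness $g(y_\alpha) = y_{\alpha+1}$ forces $fy_\beta \in B(fy_\alpha,z')$, hence $u'(fy_\alpha,fy_\beta) \geq u'(fy_\alpha,z')$. But (i) gives $u'(fy_\beta,z') < u'(fy_\alpha,z')$, and the ultrametric triangle law on the triple $fy_\beta, fy_\alpha, z'$ then forces $u'(fy_\alpha,fy_\beta) = u'(fy_\beta,z') < u'(fy_\alpha,z')$, a contradiction. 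So the collection $\{B(y_\beta,y_{\beta+1})\}_\beta$ is genuinely a nest and the transfinite recursion goes through.

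By (i) the $y_\alpha$ are pairwise distinct, so the sequence cannot have length exceeding $|Y|^+$; yet it can always be extended under the assumption $fy_\alpha \neq z'$. The only escape is $fy_\alpha = z'$ at some ordinal $\alpha$, contradicting $z' \notin f(Y)$. Once the key nesting step above is established, the rest is a standard transfinite-exhaustion argument.
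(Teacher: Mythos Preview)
Your proof is correct and follows essentially the same strategy as the paper: build a nest of balls $B(y,z)$ where $z$ witnesses the attractor property for $y$, use spherical completeness to pass through limits, and argue that the process must terminate with a preimage of $z'$. The only difference is packaging---the paper invokes Zorn's Lemma to obtain a maximal nest (with properties analogous to your (i) and (ii)) and derives a contradiction from maximality, whereas you run the equivalent transfinite recursion and terminate by a cardinality bound.
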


The map $f$ will be called \bfind{immediate} if every $z'\in Y'$ is an
attractor for $f$.

\begin{theorem}                             \label{MT}
Assume that $f:\;Y \rightarrow Y'$ is immediate and that
$(Y,u)$ is spherically complete. Then $f$ is surjective and $(Y',u')$ is
spherically complete. Moreover, for every $y\in Y$ and every ball $B'$
in $Y'$ containing $fy$, there is a ball $B$ in $Y$ containing $y$
and such that $f(B)=B'$.
\end{theorem}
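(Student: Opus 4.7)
The plan is to bootstrap from Theorem~\ref{MTattr} by using immediacy of $f$ to promote the pointwise attractor property to ball-level statements. Surjectivity is immediate: since $f$ is immediate, every $z'\in Y'$ is an attractor, and Theorem~\ref{MTattr} supplies a preimage because $(Y,u)$ is spherically complete.

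For the ball correspondence, I would fix $y\in Y$ and a ball $B'\ni fy$ and define
\[B\>:=\>\bigcup\{C\mid C\mbox{ is a ball in }Y,\ y\in C,\ f(C)\subseteq B'\}.\]
The singleton $\{y\}=B_\infty(y)$ witnesses that this family is non-empty; as a union of balls with common element $y$, $B$ is itself a ball containing $y$, and it is spherically complete because $Y$ is. Clearly $f(B)\subseteq B'$. For the reverse inclusion, the strategy is to show that every $z'\in B'$ is an attractor for the restriction $f|_B\colon B\to Y'$ and then apply Theorem~\ref{MTattr}. Given $y^*\in B$ with $fy^*\ne z'$, immediacy of $f$ yields $z\in Y$ satisfying (AT1) and (AT2); the containment $B(fy^*,z')\subseteq B'$, which holds because $fy^*,z'\in B'$, combined with (AT2) gives $f(B(y^*,z))\subseteq B'$, and the nesting dichotomy for balls with non-empty intersection (applied to $B(y^*,z)$ and any $C$ witnessing $y^*\in B$) forces $B(y^*,z)\subseteq B$, so in particular $z\in B$.

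For spherical completeness of $Y'$, I would take a nest $\{B'_i\}_{i\in I}$, WLOG strictly descending with no smallest element, and pass to a cofinal well-ordered sub-nest $(B'_{i_\alpha})_{\alpha<\kappa}$. A transfinite recursion then produces a strictly descending nest of balls $B_\alpha\subseteq Y$ with $f(B_\alpha)=B'_{i_\alpha}$: at a successor $\alpha+1$, pick $y_{\alpha+1}\in B_\alpha$ with $fy_{\alpha+1}\in B'_{i_{\alpha+1}}$ and extract $B_{\alpha+1}\subseteq B_\alpha$ via the ball correspondence; at a limit $\lambda$, set $C_\lambda:=\bigcap_{\beta<\lambda}B_\beta$, which is non-empty by spherical completeness of $Y$, repeat the attractor-closure argument to see that each $z'\in B'_{i_\lambda}$ is an attractor for $f|_{C_\lambda}$, apply Theorem~\ref{MTattr} to obtain a preimage in $C_\lambda$, and again invoke the ball correspondence to get $B_\lambda\subseteq C_\lambda$. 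The intersection $\bigcap_\alpha B_\alpha$ is non-empty, and cofinality ensures that any point therein is sent into $\bigcap_{i\in I}B'_i$.

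I expect the main obstacle to be the limit step of this recursion: to rerun the attractor-closure argument on $C_\lambda$ one needs the maximality observation that, for each $\beta$ and each $y\in B_\beta$, the ball $B_\beta$ also equals the maximal ball through $y$ whose $f$-image lies in $B'_{i_\beta}$. This independence of base point, which follows from two applications of the ball-nesting dichotomy, is what propagates immediacy through the entire transfinite construction.
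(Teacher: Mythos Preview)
Your argument is correct. The route differs from the paper's in two places.

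For the ball correspondence, the paper first sharpens Theorem~\ref{MTattr} to a lemma saying: for every attractor $z'$ and every $y\in Y$ there is $z_0$ with $fz_0=z'$ \emph{and} $f(B(y,z_0))\subseteq B(fy,z')$. The ball $B$ is then assembled from below as $\bigcup_{z'\in B'} B(y,z_0)$. You instead build $B$ from above as the maximal ball through $y$ with $f$-image in $B'$ and show $f|_B$ is again immediate, so Theorem~\ref{MTattr} applied to $B$ gives $B'\subseteq f(B)$. Your nesting check that $B(y^*,z)\subseteq B$ is fine: either $B(y^*,z)\subseteq C$ for some witness $C$, or $C\subsetneq B(y^*,z)$, in which case $y\in C\subseteq B(y^*,z)$ and $B(y^*,z)$ is itself a witness.

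For spherical completeness, the paper uses Zorn's Lemma to produce a maximal nest $\{B_i\}_{i\in I}$ with $I\subseteq J$ and $f(B_i)=B'_i$, then argues coinitiality by contradiction, invoking only (BB). Your transfinite recursion along a cofinal well-ordered subnest is equivalent in spirit, but your limit step is heavier than necessary: rather than redoing the attractor-closure argument on $C_\lambda$, you could simply apply (BB) once to the ball $\bigcap_{\beta<\lambda} B'_{i_\beta}\supseteq B'_{i_\lambda}$ at a base point in $C_\lambda$, extract a preimage of some point of $B'_{i_\lambda}$ from that, and apply (BB) again. The ``maximality observation'' you flag is genuine but routine, exactly the two-way nesting argument you sketch.

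What the paper's approach buys is modularity: once the strengthened attractor lemma is in hand, (BB) and spherical completeness follow with no further appeal to immediacy. What yours buys is that Theorem~\ref{MTattr} is used purely as a black box, at the cost of re-verifying immediacy on subballs along the way.
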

This theorem is a generalization of a result proved in [KU1] for
additive maps on spherically complete abelian groups (see
Section~\ref{sectvg} for the definition). Theorem~\ref{MT} also works in
the case where the map $f$ is not additive (or even when there is no
addition at all). It is related to ultrametric fixed point theorems as
proved in [P2], [PR1]. Compared to them, it has the advantage that it
can be applied to situations where a natural contracting map is not at
hand. There is also a variant of our ``Attractor Theorem''
(Theorem~\ref{MTattr}) which works for ultrametric spaces with partially
ordered value sets ([PR2]). For further information and applications of
ultrametric fixed point theorems, see also [SCH] and [PR3].

\pars
If $f$ is just the embedding of an ultrametric subspace $Y$ in an
ultrametric space $Y'$, then (AT2) will automatically hold. Hence, we
will say that $Y$ is an \bfind{immediate subspace of} $Y'$ if it is an
ultrametric subspace of $Y'$ and for all $z'\in Y'$ and $y\in Y$ there
is $z\in Y$ such that $u'(z,z')>u'(y,z')$. Now Theorem~\ref{MT} yields:

\begin{corollary}
Assume that $Y$ is an immediate ultrametric subspace of $Y'$. If
$(Y,u)$ is spherically complete, then $Y=Y'$.
\end{corollary}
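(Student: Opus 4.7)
The plan is to deduce the corollary as a direct application of Theorem~\ref{MT} to the inclusion map $f:Y\hookrightarrow Y'$. To do so, I need to check that $f$ is immediate in the sense defined just before Theorem~\ref{MT}, i.e.\ that every $z'\in Y'$ is an attractor for $f$.

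Fix $z'\in Y'$ and $y\in Y$ with $z'\ne fy=y$. The condition (AT1) asks for some $z\in Y$ with $u'(fz,z')=u'(z,z')>u'(y,z')$, which is exactly the defining condition of $Y$ being an immediate subspace of $Y'$. For (AT2), the paper already remarks that when $f$ is an embedding of an ultrametric subspace, (AT2) is automatic; let me indicate why so the argument is self-contained. With $z$ as chosen in (AT1), the strict inequality $u'(z,z')>u'(y,z')$ forces $u'(y,z)=u'(y,z')$ by the ultrametric triangle law. Hence for any $w\in B(y,z)$ computed inside $Y$, we have $u'(y,w)=u(y,w)\geq u(y,z)=u'(y,z')$, so that $w\in B_{u'(y,z')}(y)=B(fy,z')$; this gives $f(B(y,z))\subseteq B(fy,z')$.

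Having verified (AT1) and (AT2) for every $z'\in Y'$, the map $f$ is immediate. Since $(Y,u)$ is spherically complete by hypothesis, Theorem~\ref{MT} applies and yields that $f$ is surjective. Because $f$ is the inclusion $Y\hookrightarrow Y'$, this means $Y=Y'$.

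The only subtlety is the verification of (AT2), but this is essentially forced by the strict improvement in (AT1) combined with the ultrametric triangle law, so there is no real obstacle. The whole argument is just a matter of unpacking the definitions and invoking Theorem~\ref{MT}.
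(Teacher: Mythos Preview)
Your proof is correct and follows exactly the approach the paper intends: the paper states just before the corollary that (AT2) is automatic for an embedding of a subspace, so the inclusion is immediate, and then simply says ``Now Theorem~\ref{MT} yields'' the corollary. You have merely spelled out the verification of (AT2) that the paper leaves implicit.
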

\sn
It should be noted that an immediate subspace is not necessarily a dense
subspace.

\parm
A subspace $Y$ of $Y'$ is said to have the \bfind{optimal approximation
property} ({\bf in} $Y'$) if for every $z'\in Y'$ there is $z\in Y$ such
that $u'(z,z')= \max\{u'(y,z')\mid y\in Y\}$. The element $z$ need not
be uniquely determined. If the set $\{u'(y,z')\mid y\in Y\}$ has no
maximum, then $z'$ is an attractor for the embedding of $Y$ in $Y'$. On
the other hand, if $z'\in Y$, then the maximum is $u(z',z')=\infty$.
Thus, Theorem~\ref{MTattr} yields:
\begin{corollary}                           \label{scoap}
Assume that $Y$ is an ultrametric subspace of $Y'$. If $(Y,u)$ is
spherically complete, then it has the optimal approximation property.
\end{corollary}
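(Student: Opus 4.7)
The plan is to apply Theorem~\ref{MTattr} to the inclusion map $f:Y\hookrightarrow Y'$, leveraging the observations already made in the paragraph preceding the statement. Fix $z'\in Y'$; we must show that the set $S:=\{u'(y,z')\mid y\in Y\}$ has a maximum. If $z'\in Y$, then $u'(z',z')=\infty\in S$ is trivially the maximum, so we may assume $z'\notin Y$ and, towards a contradiction, that $S$ has no maximum.

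Under this assumption I would verify that $z'$ is an attractor for $f$. Given $y\in Y$ with $fy=y\ne z'$, failure of $S$ to attain its supremum yields $z\in Y$ with $u'(z,z')>u'(y,z')$, which gives (AT1). For (AT2), since $f$ is the inclusion we have $fy=y$ and $f(B(y,z))=B(y,z)$; moreover, the strict inequality $u'(z,z')>u'(y,z')$ combined with the ultrametric triangle law forces $u(y,z)=u'(y,z)=u'(y,z')$. Then (\ref{umball}) yields $B(y,z)\subseteq B(y,z')=B(fy,z')$, which is (AT2).

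Thus $z'$ is an attractor, and spherical completeness of $(Y,u)$ together with Theorem~\ref{MTattr} gives $z'\in f(Y)=Y$, contradicting $z'\notin Y$. Hence $S$ must have a maximum, which is the optimal approximation property. The only real step is the verification of (AT2); condition (AT1) is built into the negation of ``$S$ has a maximum,'' and indeed the paper's remark introducing the notion of immediate subspace already asserts that (AT2) is automatic for embeddings --- the above triangle-law calculation merely makes this explicit.
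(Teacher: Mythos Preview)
Your proof is correct and follows essentially the same route as the paper: if the set $\{u'(y,z')\mid y\in Y\}$ has no maximum, then $z'$ is an attractor for the inclusion, and Theorem~\ref{MTattr} forces $z'\in Y$, which is a contradiction. Your explicit verification of (AT2) via the triangle law (showing $u'(y,z)=u'(y,z')$ once (AT1) holds) is exactly the content of the paper's remark that (AT2) ``will automatically hold'' for an embedding.
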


%
%
\subsection{Applications}            \label{sectappl}
$\bullet$ \ {\bf The Additive Main Theorem}
\sn
In some applications, the map $f$ is a homomorphism of abelian groups
and the ultrametric $u$ is induced by a group (or field) valuation (see
Section~\ref{sectvg} for definitions). With the presence of addition,
balls can be shifted additively to balls that contain $0$. In this way,
the criteria for immediate maps become much easier to formulate and to
check (see Proposition~\ref{ATC}). In Section~\ref{sectav} we will prove
the additive version of our Ultrametric Main Theorem
(Theorem~\ref{MTadd}), which works for homomorphisms.

In Section~\ref{sectbc} we will introduce the notion of {\it
pseudo-companion} for arbitrary maps on valued abelian groups. One can
think of it as a linearization at a certain point ``up to terms of
higher order'', valuation theoretically speaking. This notion will then
play an essential role when we study polynomial maps.

\bn
$\bullet$ \ {\bf Hensel's Lemma revisited}
\sn
Let $(K,v)$ be a valued field with valuation ring ${\cal O}$ and
valuation ideal ${\cal M}$. Further, take a polynomial $f\in {\cal O}
[X]$ and $b\in {\cal O}$ such that $s:=f'(b)\ne 0$. In
Section~\ref{sectpm} we consider $f$ as a map on $K$ and prove that
{\it $f$ induces an immediate injective map from $b+s{\cal M}$ into
$f(b)+s^2{\cal M}$} (Proposition~\ref{findpl}). Here, the
pseudo-companion is simply multiplication by $s$. From Theorem~\ref{MT}
we obtain that {\it if $(K,v)$ is spherically complete (i.e., its
underlying ultrametric is spherically complete), then this map is onto}
(Theorem~\ref{MT2}).

This allows a new look at Hensel's Lemma: while it is always true for
$(K,v)$ spherically complete and $f'(b)\ne 0$ that the above map is
onto, the condition ``$vf(b)\geq 2vf'(b)$'' of Hensel's Lemma guarantees
that $0\in f(b)+s^2{\cal M}$ and consequently, there is $a\in K$ such
that $f(a)=0$ and $v(a-b)>vf'(b)$ (see Section~\ref{sectHLr}). We
generalize this result to systems of $n$ polynomials in $n$ variables
and use it to prove that the multidimensional Hensel's Lemma holds in
every spherically complete valued field (Theorem~\ref{sphcomHL}). By
an easy argument due to F.~Pop, we conclude that the multidimensional
Hensel's Lemma holds in every henselian field (see
Theorem~\ref{multhens}). Further, we prove results on the surjectivity
of functions defined by power series in spherically complete valued
fields (see Section~\ref{sectpsf}).

Our above approach to Hensel's Lemma has also been used in a
non-commutative setting. In [VC] it is applied to skew power series
fields over skew fields.

\bn
$\bullet$ \ {\bf Infinite-dimensional Implicit Function Theorems}
\sn
The $n$-fold product of a spherically complete ultrametric space is
again spherically complete (see Section~\ref{sectpd}). We use this fact
for the proof of the multi-dimensional Hensel's Lemma. If one thinks of
generalizing this to an infinite-dimensional version, one runs into
problems when trying to define a suitable product. But if one restricts
the scope to valued rings with well ordered value sets, then this is
possible. Using the above mentioned notion of {\it pseudo-companion}, we
formulate in Sections~\ref{sectidIFT} and~\ref{sectpsiift} several
infinite-dimensional Implicit Function Theorems, for polynomial and
power series maps. Such theorems are of interest for B.\ Teissier's
approach to local uniformization in arbitrary characteristic (cf.\ [T],
Theorem~5.56).

\bn
$\bullet$ \ {\bf VD-fields}
\sn
A VD-field is a valued field $(K,v)$ with an additive map
$D: K \rightarrow K$ satisfying conditions that are a relaxation of
T.~Scanlon's axioms for valued D-fields (cf.~[S1,2]). Scanlon's notion
comprises both differential and
difference fields. Essential features of VD-fields are that the
value $vDa$ depends on the value $va$ in a sufficiently simple way and
that $D$ induces an additive map on the residue field of $K$ (again
denoted by $D$). The following result, proved in Section~\ref{sectdf},
shows that in this setting, the notion of {\it immediate map}
appears in a very natural way: {\it If $(K,D,v)$ is a VD-field,
then $D$ is immediate if and only if $D$ is surjective on $Kv$}
(Theorem~\ref{Dimmsurj}). Hence we obtain from Theorem~\ref{MT} that
{\it if $(K,D,v)$ is a spherically complete VD-field such that
$D$ is surjective on $Kv$, then $D$ is surjective on $K$} (see
Theorem~\ref{DF1}).

In Section~\ref{sectdf} we will also prove the following version of
Scanlon's D-Hensel's Lemma (cf.~[S1,2]). By $D^i$ we denote the $i$-th
iterate of $D$. The residue field $Kv$ is said to be \bfind{linearly
$D$-closed} if each operator $\,\sum_{i=0}^{n}c_iD^i$ with $c_i\in Kv$
is surjective on $Kv$.
\begin{theorem}                             \label{DHL}
Let $(K,D,v)$ be a spherically complete VD-field whose residue
field is linearly $D$-closed. Take a polynomial $f\in {\cal O}
[X_0,X_1,\ldots,X_n]$ and assume that there is some $b\in {\cal O}$
such that
\[\gamma\>:=\>\min_{0\leq i\leq n} v\frac{\partial f}{\partial X_i}
(b, Db,\ldots, D^nb) \><\> \infty \mbox{ \ \ \ and \ \ \ }
vf(b,Db\ldots,D^nb)>2\gamma\;.\]
Then there is an element $a\in K$ such that $f(a,Da,\ldots,D^na)
=0$ and $v(a-b)>\gamma$.
\end{theorem}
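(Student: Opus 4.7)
The plan is to apply the Attractor Theorem (Theorem~\ref{MTattr}) to the map $F\colon K\to K$ defined by $F(x):=f(x,Dx,\ldots,D^nx)$, restricted to the closed ball $Y:=B_{\gamma'}(b)$ where $\gamma':=vf(b,Db,\ldots,D^nb)-\gamma$. By hypothesis $\gamma'>\gamma$, and as a closed ball in the spherically complete space $(K,v)$, $Y$ is itself spherically complete. I will show that $0\in K$ is an attractor for $F|_Y$; Theorem~\ref{MTattr} then yields $a\in Y$ with $F(a)=0$, and membership in $Y$ forces $v(a-b)\ge\gamma'>\gamma$, which is the desired conclusion.

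The algebraic Taylor expansion, combined with additivity of $D$, yields for every $y\in K$ and $c\in K$
\[
F(y+c)\>=\>F(y)+L_y(c)+R_y(c), \qquad
L_y(c)\>:=\>\sum_{i=0}^n \frac{\partial f}{\partial X_i}(y,Dy,\ldots,D^ny)\cdot D^ic,
\]
where $R_y(c)$ collects all contributions of total degree $\ge 2$ in the $D^ic$. For $y\in Y$ the partial derivatives at $(y,Dy,\ldots,D^ny)$ differ from those at $(b,Db,\ldots,D^nb)$ by terms of value $>\gamma$ (another Taylor argument), so their minimum value is still $\gamma$, attained by some coefficient $s$.

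To verify AT1: given $y\in Y$ with $F(y)\ne 0$, divide the approximate equation $L_y(c)\equiv-F(y)$ by $s$ and reduce modulo the valuation ideal. The reduced operator on $Kv$ has the form $\sum_i\overline{c_i}D^i$ with at least one nonzero $\overline{c_i}$, and the reduced right-hand side lies in $Kv$; since $Kv$ is linearly $D$-closed, a solution exists in $Kv$. Lifting this solution back to $\mathcal{O}$ and rescaling by an element of value $vF(y)-\gamma$ produces $c\in K$ with $v(c)\ge\gamma'$ (hence $z:=y+c\in Y$) and $vF(z)>vF(y)$; this is exactly AT1 at the attractor $0$. Condition AT2 follows from the same expansion: for any $c'$ with $v(c')\ge v(c)$ the VD-field axioms ensure $vD^ic'\ge vD^ic$, so both $L_y(c')$ and $R_y(c')$ have valuation at least that of their counterparts, giving $F(B(y,z))\subseteq B(Fy,0)$.

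The main technical obstacle is the valuation bookkeeping. One must control how $v\frac{\partial f}{\partial X_i}(y,Dy,\ldots)$ drifts from its value at $b$ as $y$ varies in $Y$; estimate $vR_y(c)$ via the VD-field relation between $vc$ and $vD^ic$ to guarantee that the correction $L_y(c)$ genuinely dominates the remainder; and verify that the lift of the residue-field solution carries the required value $\ge\gamma'$, so that each refinement remains inside $Y$. Once these estimates are in hand, AT1 and AT2 hold uniformly on $Y$ and Theorem~\ref{MTattr} delivers the desired $a\in K$ with $F(a)=0$ and $v(a-b)>\gamma$.
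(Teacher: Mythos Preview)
Your overall strategy---apply the Attractor Theorem to $F(x)=f(x,Dx,\ldots,D^nx)$ on a suitable ball, produce the improving element $c$ by solving a reduced linear $D$-equation in $Kv$ and lifting---is exactly the plan the paper follows, only the paper packages the ``lift and rescale'' step through an abstract \emph{weak coefficient map} (Lemma~\ref{exco}, Proposition~\ref{immopwcm}) and the general Theorem~\ref{genHLsao} before specializing to VD-fields in Theorem~\ref{DHLp}.

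There is, however, a genuine gap in your rescaling step. You write ``lifting this solution back to $\mathcal{O}$ and rescaling by an element of value $vF(y)-\gamma$'', but an arbitrary element $m$ of that value will not do. Using (VDF3), $D(mc_0)=mDc_0+c_0Dm+e(Dm)(Dc_0)$; by (VDF1) one only gets $v(D(mc_0)-mDc_0)\ge vm$, whereas you need strict inequality so that $\sum d_iD^i(mc_0)$ agrees with $m\sum d_iD^ic_0$ to order $>vF(y)$. The fix is to invoke (VDF2) to choose $m$ with $vDm>vm$, and then prove inductively that $v(D^i(mc_0)-mD^ic_0)>vm$; this is precisely the content of the paper's Lemma~\ref{lvDm}, and it is the one place where (VDF2) and (VDF3) are actually used. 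Without it the lifted $c$ need not satisfy $v(F(y)+L_y(c))>vF(y)$, so (AT1) fails. The same issue applies to your choice of $s$: to identify the residue operator with $\sum\overline{(d_i/s)}D^i$ you again need $vDs>vs$.

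A minor point on (AT2): the claim ``$vD^ic'\ge vD^ic$'' is not justified and is not what you need. The correct (and simpler) estimate is $vD^ic'\ge vc'\ge vc$ from (VDF1), whence $vd_iD^ic'\ge\gamma+vc=vF(y)$; together with $vR_y(c')\ge 2vc'>vF(y)$ this gives $F(B(y,z))\subseteq B(Fy,0)$ directly.
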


In fact, we will deduce this theorem from a much more general
Hensel's Lemma for polynomials in several additive operators
(Theorem~\ref{genHLsao} in Section~\ref{sectwcm}).

\bn
$\bullet$ \ {\bf Rosenlicht valued differential fields}
\sn
A valuation $v$ on a differential field $(K,D)$ is a {\it differential
valuation} in the sense of M.~Rosenlicht (cf.\ [R1]) if it satisfies an
axiom that is derived from de l'H\^opital's Rule. In this case, there is
in general no simple correspondence between the values $vDa$ and $va$,
and there is also no suitable map induced on the residue field. Yet
again, immediate maps appear naturally. We say that $(K,D)$
\bfind{admits integration} if $D$ is surjective, and that $(K,D,v)$
\bfind{admits asymptotic integration} (cf.\ [R2]) if for every $a'\in
K\setminus \{0\}$, there is some $a\in K$ such that
\[v(a'-Da)\> >\> va'\;.\]
In Section~\ref{sectrvdf}, we will give the (easy) proof of the
following fact: {\it If $v$ is a differential valuation on $(K,D)$, then
$D$ is immediate if and only if $(K,D,v)$ admits asymptotic integration}
(see Proposition~\ref{dv=tc}). Hence we obtain from Theorem~\ref{MT}:
{\it Let $(K,D)$ be a differential field, endowed with a
spherically complete differential valuation $v$. If
$(K,D,v)$ admits asymptotic integration, then $(K,D)$ admits
integration} (Theorem~\ref{D1}).

\pars
In Section~\ref{sectrvdf} we will also prove a theorem about integration
on the union of an increasing chain of spherically complete Rosenlicht
valued differential fields (Theorem~\ref{LE}). It can be used to
show that the derivation on the logarithmic-exponential power series
field $\R((t))^{LE}$ (cf.\ [DMM3]) is surjective.

\pars
When we try to prove a ``differential Hensel's Lemma'' for Rosenlicht's
differential valuations, we experience technical problems because of the
weak correspondence between the values $vDa$ and $va$. In this case, the
results are not as nice and simple as in the case of VD-fields.
The main results are Theorem~\ref{DVHL}, obtained from the more general
Theorem~\ref{genHLdom} proved in Section~\ref{sectdom}, and
Theorem~\ref{DVHLR}, obtained from the more general Theorem~\ref{genHLRos}
proved in Section~\ref{sectRso}. As a simple application we obtain a
result which was proved by Lou van den Dries in~[D] (see
Corollary~\ref{VDD}).

\bn
$\bullet$ \ {\bf Sums of spherically complete valued abelian groups}
\sn
So far, we have been interested in the surjectivity of maps. Here is an
application where we use that the image of the map inherits spherical
completeness. It is used in [KU2] to determine elementary properties of
the power series field $\F_p((t))$ in connection with \bfind{additive
polynomials}. A polynomial $f$ is called additive on an infinite field
$K$ if $f(a+b)=f(a)+f(b)$ for all $a,b\in K\>$ (cf.\ [L], VIII, \S 11).
For example, the polynomials $X^p$ and $X^p-X$ are additive on
$\F_p((t))$ and every other field of characteristic $p$. For every
additive polynomial $f$ on a field $K$, the image $f(K)$ is a subgroup
of the additive group of $K$. If $f_1,\ldots,f_n$ are additive
polynomials with coefficients in $K$, then the sum
$f_1(K)+\ldots+f_n(K)$ is again a subgroup of the additive group of $K$.

If $K$ is a maximally valued field (like $K=\F_p((t))\,$; cf.\
Section~\ref{sectvf}), then the image
$f(K)$ of every polynomial is spherically complete. Hence the question
arises whether the subgroup $f_1(K)+\ldots+f_n(K)$ is again spherically
complete. In Section~\ref{sectgp} we will show that the sum of
spherically complete subgroups of a valued abelian group is spherically
complete (and hence has the optimal approximation property) if the sum
is {\it pseudo-direct} (cf.\ Theorem~\ref{addgr}). The optimal
approximation property of a definable subgroup in a valued abelian group
is an elementary property in the language of groups with a predicate for
the valuation. If the subgroups are definable, then also the assertion
that their sum is pseudo-direct is elementary. Hence, given additive
polynomials $f_1,\ldots,f_n$ with coefficients in $K=\F_p((t))$, the
assertion
\begin{center}
\it if $f_1(K)+\ldots+f_n(K)$ is pseudo-direct, then it has the
optimal approximation property
\end{center}
is elementary in the language of valued fields (enriched by names for
the coefficients of the polynomials $f_i$). By Theorem~\ref{addgr}, it
holds for $K=\F_p((t))$, and for every other spherically complete
valued field $(K,v)$. See [KU2] and [KU3] for further details.

%
%
\section{Ultrametric Spaces}
%
%
%
\subsection{Proof of the Ultrametric Main Theorem}  \label{sectpuv}
For the proof of Theorem~\ref{MTattr}, we show the following more
precise statement:
\begin{lemma}
Assume that $z'\in Y'$ is an attractor for $f:\;Y \rightarrow Y'$ and
that $(Y,u)$ is spherically complete. Then for every $y\in Y$ there is
$z_0\in Y$ such that $fz_0=z'$ and $f(B(y,z_0))\subseteq B(fy,z')$.
\end{lemma}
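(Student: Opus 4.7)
The plan is to construct by transfinite recursion a sequence $(y_\lambda)_{\lambda \le \mu}$ starting from $y_0 = y$, with strictly increasing $u'$-distances $\gamma_\lambda := u'(fy_\lambda, z')$, and to show that this recursion must terminate at some ordinal $\mu$ with $fy_\mu = z'$; then $z_0 := y_\mu$ will be the required element. Termination will come from a cardinality argument, since a strictly monotone map from ordinals into the set $\Gamma$ cannot run indefinitely, while limit stages are handled by invoking spherical completeness on a nest of balls $B_\lambda := B(y_\lambda, y_{\lambda+1})$ that the construction simultaneously maintains. The trivial case $fy = z'$ is handled by taking $z_0 = y$, so assume $fy \ne z'$.

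At a successor step with $fy_\lambda \ne z'$, apply the attractor hypothesis at $y_\lambda$ to obtain $y_{\lambda+1}$ satisfying (AT1) and (AT2) with $y = y_\lambda$, $z = y_{\lambda+1}$. Writing $\delta_\lambda := u(y_\lambda, y_{\lambda+1})$, the strict form of the ultrametric triangle law applied to $fy_\lambda, fy_{\lambda+1}, z'$ forces $u'(fy_\lambda, fy_{\lambda+1}) = \gamma_\lambda$, so $fy_\lambda \notin B(fy_{\lambda+1}, z')$; then (AT2) applied at $y_{\lambda+1}$ forces $y_\lambda \notin B_{\lambda+1}$, whence $\delta_\lambda < \delta_{\lambda+1}$ and, by (\ref{umball}), $B_{\lambda+1} \subseteq B_\lambda$. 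At a limit stage $\lambda$, spherical completeness produces $y_\lambda \in \bigcap_{\nu < \lambda} B_\nu$. For each $\nu < \lambda$, the inclusion $y_\lambda \in B_\nu$ together with (AT2) at $y_\nu$ gives $u'(fy_\nu, fy_\lambda) \ge \gamma_\nu$; the triangle law on $fy_\lambda, fy_\nu, z'$ then gives $\gamma_\lambda \ge \gamma_\nu$, and strict increase of $(\gamma_\nu)$ turns this inequality strict. The same exclusion argument as at successors then extends the nest with $B_\lambda$ once $y_{\lambda+1}$ is chosen.

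Since $\lambda \mapsto \gamma_\lambda$ injects strictly into $\Gamma$, the recursion must terminate at some $\mu$ with $fy_\mu = z'$; set $z_0 := y_\mu$. The nesting yields $y_\mu \in B_0 = B(y, y_1)$, so (\ref{umball}) gives $B(y, z_0) \subseteq B(y, y_1)$, and (AT2) applied at $y_0 = y$ delivers $f(B(y, z_0)) \subseteq f(B(y, y_1)) \subseteq B(fy, z')$, as required.

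The main obstacle is the nesting step $\delta_\lambda < \delta_{\lambda+1}$ at each successor, carried out by the strict triangle manipulation that pushes $fy_\lambda$ out of $B(fy_{\lambda+1}, z')$ and hence $y_\lambda$ out of $B_{\lambda+1}$: without it, the balls $B_\lambda$ need not form a nest, spherical completeness cannot be invoked at limits, and the whole transfinite recursion collapses. Once that inequality is secured, the preservation of strict monotonicity of $(\gamma_\lambda)$ at limits, the cardinality-based termination, and the final identification of $B(y, z_0)$ inside $B(y, y_1)$ are straightforward consequences of the axioms.
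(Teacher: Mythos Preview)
Your argument is correct. The approach is essentially the same as the paper's: both build a strictly decreasing nest of balls $B(y_\lambda,y_{\lambda+1})$ by repeated application of (AT1)--(AT2), use spherical completeness to pass through limits, and extract $z_0$ from the intersection. The only organizational difference is that the paper phrases this via Zorn's Lemma on nests satisfying the analogues of your monotonicity conditions and then argues by contradiction against maximality, whereas you run an explicit transfinite recursion and terminate by cardinality; this is the standard Zorn-versus-recursion trade-off and neither route buys anything the other does not. One small presentational remark: your paragraph on successors is really verifying $B_{\lambda+1}\subseteq B_\lambda$ \emph{after} $y_{\lambda+2}$ has been chosen (since it invokes (AT2) at $y_{\lambda+1}$), so the nesting invariant is confirmed one step behind the construction---this is harmless but worth stating explicitly to make the induction airtight.
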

\begin{proof}
If $z'=fy$ then we set $z_0=y$ and there is nothing to show. So assume
that $z'\ne fy$. Then by assumption on $z'$ there is $z\in Y$ such that
(AT1) and (AT2) hold. Take elements $y_i,z_i\in B(y,z)$, $i\in I$, such
that the balls $B(y_i,z_i)$ form a nest inside of $B(y,z)$, maximal with
the following properties, for all~$i$:
\sn
i) \ $z'=fy_i=fz_i\>$ or $\>u'(z',fz_i)>u'(z',fy_i)$,\n
ii) \ $f(B(y_i,z_i))\subseteq B(fy_i,z')$,\n
iii) \ for all $j\in I$, $u(y_i,z_i)<u(y_j,z_j)$ implies that
$u'(fy_i,z')<u'(fy_j,z')$.
\sn
Non-empty nests with these properties exist. Indeed, the singleton
$\{B(y,z)\}$ is such a nest. Maximal nests with these properties
exist by Zorn's Lemma. Take one such maximal nest. As soon as we find
$z_0\in B(y,z)$ such that $z'=fz_0$ we are done because
$f(B(y,z_0))\subseteq f(B(y,z)) \subseteq B(fy,z')$.

Assume first that this nest has a minimal ball, say, $B(y_0,z_0)$. If
$z'=fz_0$ then we are done. So assume that $z'\ne fz_0$, and set
$\tilde{y}:= z_0\,$. Then by assumption on $z'$, we can find
$\tilde{z}\in Y$ such that
\[u'(f\tilde{z},z')>u'(f\tilde{y},z') \mbox{ \ \ and \ \ }
f(B(\tilde{y},\tilde{z}))\subseteq B(f\tilde{y},z')\;.\]
We have that
\begin{equation}                            \label{uztz0}
u'(f\tilde{y},z')=u'(fz_0,z')>u'(fy_0,z')=u'(f\tilde{y},fy_0)\;,
\end{equation}
where the last equality follows from the ultrametric triangle law.
So we know that $fy_0\notin B(f\tilde{y},z')$ and thus, $y_0\notin
B(\tilde{y},\tilde{z})$. This shows that $u(\tilde{y},\tilde{z})>
u(\tilde{y},y_0)=u(z_0,y_0)$, and since
$\tilde{y}= z_0\in B(z_0,y_0)$, it follows that
$B(\tilde{z},\tilde{y}) \subsetuneq B(z_0,y_0)$. So we
can enlarge our nest of balls by adding $B(\tilde{z},\tilde{y})$, and
conditions i) and ii) hold for the new nest. From iii) we see that
$u'(fy_0,z')$ is maximal among the $u'(fy_i,z')$, $i\in I$; so
(\ref{uztz0}) shows that also iii) holds for the new nest. But this
contradicts the maximality of the chosen nest.

Now assume that the nest contains no smallest ball. Since $(Y,u)$ is
spherically complete by assumption, there is some $z_0\in \bigcap_{i\in
I} B(y_i,z_i)$. Suppose that $fz_0\ne z'$. Then we set $\tilde{y}:=
z_0\,$. For all $i$, we have $\tilde{y}\in B(y_i,z_i)$ and
$f\tilde{y}\in f(B(y_i,z_i)) \subseteq B(fy_i,z')$, showing that
$u'(f\tilde{y},z')\geq u'(fy_i,z')$. We choose $\tilde{z}$ as before. We
have $f(B(\tilde{y},\tilde{z})) \subseteq B(f\tilde{y},z') \subseteq
B(fy_i,z')$ for all $i$. On the other hand, since the nest contains no
smallest ball, the set $\{u(y_i,z_i)\mid i\in I\}$ has no maximal
element. So iii) implies that also the set $\{u'(fy_i,z')\mid i\in I\}$
has no maximal element. Consequently, for all $i\in I$ there is $j\in I$
such that $u'(f\tilde{y},z')\geq u'(fy_j,z')>u'(fy_i,z')\,$.
Consequently, $fy_i\notin B(f\tilde{y},z')$, which yields that
$y_i\notin B(\tilde{y},\tilde{z})$. Therefore, $B(\tilde{y},\tilde{z})
\subsetuneq B(y_i,z_i)$ and $u(\tilde{y},\tilde{z})>u(y_i,z_i)$ for all
$i$. So we can enlarge our nest of balls by adding
$B(\tilde{y},\tilde{z})$, and conditions i), ii) and iii) hold for the
new nest. This again contradicts the maximality of the chosen nest.
Hence, $fz_0=z'$ and we are done.
\end{proof}

\begin{corollary}                           \label{BB}
Assume that $f:\;Y \rightarrow Y'$ is immediate and that
$(Y,u)$ is spherically complete. Then the following holds:
\sn
{\bf (BB)} \
for every $y\in Y$ and every ball $B'$ in $Y'$ around $fy$, there is
a ball $B$ in $Y$ around $y$ such that $f(B)=B'$.
\end{corollary}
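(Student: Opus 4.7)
The plan is to apply the preceding Lemma pointwise to each target $z' \in B'$ and glue the resulting source balls. Concretely, since $f$ is immediate every $z' \in Y'$ is an attractor, so for every $z' \in B'$ the Lemma (together with spherical completeness of $(Y,u)$) supplies a point $z_0 = z_0(z') \in Y$ satisfying
\[
fz_0 \>=\> z' \qquad \mbox{and} \qquad f(B(y,z_0))\>\subseteq\> B(fy,z')\;.
\]
For the degenerate case $z' = fy$ I would simply take $z_0 = y$, which trivially fulfills both requirements.

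The key auxiliary observation is that $B(fy,z') \subseteq B'$ for every $z' \in B'$. For $z' = fy$ this is immediate since $B(fy,fy) = \{fy\}$. For $z' \ne fy$, $B(fy,z')$ is by definition the smallest closed ball containing both $fy$ and $z'$. Writing $B'$ as the union of a family of closed balls with a common element, the ultrametric dichotomy (two closed balls meeting in a point are comparable by inclusion) gives that one can find a single member of this family that contains both $fy$ and $z'$; this closed ball then contains $B(fy,z')$, whence $B(fy,z') \subseteq B'$. Combining this with the Lemma yields $f(B(y,z_0(z'))) \subseteq B'$ for every $z' \in B'$.

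Now I set
\[
B \>:=\> \bigcup_{z' \in B'} B\bigl(y,\,z_0(z')\bigr)\;.
\]
Every $B(y,z_0(z'))$ contains $y$, so $B$ is a union of closed balls sharing the common element $y$; by the paper's definition, $B$ is a ball in $Y$ containing $y$. The previous paragraph gives $f(B) \subseteq B'$. Conversely, for each $z' \in B'$ we have $z_0(z') \in B(y,z_0(z')) \subseteq B$ and $fz_0(z') = z'$, so $B' \subseteq f(B)$. Hence $f(B) = B'$, as required.

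I do not foresee a substantive obstacle: the Lemma does the genuine work of producing preimages with controlled image size, and the rest is bookkeeping. The only mildly delicate point is verifying $B(fy,z') \subseteq B'$ when $B'$ is not literally a closed ball centred at $fy$, but this is immediate from the elementary structure of ultrametric balls recalled in Section~\ref{sectuv}.
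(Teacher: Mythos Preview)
Your proof is correct and follows essentially the same approach as the paper's: decompose $B'$ as $\bigcup_{z'\in B'} B(fy,z')$, apply the preceding Lemma to each $z'$ to obtain a ball $B(y,z_0(z'))$ mapping into $B(fy,z')\subseteq B'$ and hitting $z'$, and take $B$ to be the union of these source balls. The only difference is that you spell out the verification of $B(fy,z')\subseteq B'$ in more detail than the paper, which simply writes $B'=\bigcup_{z'\in B'} B(z',fy)$ and leaves this implicit.
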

\begin{proof}
Assume that $y\in Y$ and that $B'$ is any ball in $Y'$ which
contains $fy$. Then we can write
\[B'=\bigcup_{z'\in B'} B(z',fy)\;.\]
According to the foregoing lemma, for every $z'$ there is
$z_0\in Y$ such that $z'\in f(B(y,z_0))\subseteq
B(fy,z')\subseteq B'$. Take $B$ to be the union over all such
balls $B(y,z_0)$ when $z'$ runs through all elements of
$B'$. Then $B$ is a ball around $y$ satisfying $f(B)=B'$.
\end{proof}

The next lemma proves Theorem~\ref{MT}:

\begin{lemma}
Assume that $f:\;Y \rightarrow Y'$ is a map which satisfies (BB), and
that $(Y,u)$ is spherically complete. Then $f$ is surjective, and
$(Y',u')$ is spherically complete.
\end{lemma}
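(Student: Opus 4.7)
The plan is to establish surjectivity first, then spherical completeness of $(Y',u')$. For surjectivity, given $z'\in Y'$, I pick any $y\in Y$ (there is nothing to show if $z'=fy$) and form the closed ball $B(fy,z')=B_{u'(fy,z')}(fy)$, which contains both $fy$ and $z'$; applying (BB) at $y$ with this ball produces $B\ni y$ in $Y$ with $f(B)=B(fy,z')$, and since $z'\in f(B)$, a preimage exists.

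For spherical completeness of $(Y',u')$ I would show that every nest $(B'_i)_{i\in I}$ of balls in $Y'$ admits a parallel nest in $Y$: a family $(B_j)_{j\in J}$ of balls in $Y$, indexed by a cofinal subset $J\subseteq I$, with $f(B_j)=B'_j$ and $B_j\supseteq B_{j'}$ whenever $B'_j\supseteq B'_{j'}$. Given such a $(B_j)$, spherical completeness of $(Y,u)$ supplies $y^*\in\bigcap_{j\in J} B_j$, and cofinality gives $fy^*\in\bigcap_{j\in J} B'_j\subseteq\bigcap_{i\in I} B'_i$. To produce $(B_j)$, I apply Zorn's lemma to the set of all such partial lifts ordered by extension and obtain a maximal element $(J^*,(B_j)_{j\in J^*})$.

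The main technical obstacle is showing that $J^*$ is cofinal. If not, some $i_0\in I$ satisfies $B'_{i_0}\subsetuneq B'_j$ for every $j\in J^*$, and the task is to enlarge $J^*$ by $i_0$, contradicting maximality. When $J^*$ has a smallest element $j_0$, I just pick $y\in B_{j_0}$ with $fy\in B'_{i_0}\subseteq B'_{j_0}=f(B_{j_0})$ and apply (BB). The delicate case is when $J^*$ has no smallest element, which is the ``limit'' case where spherical completeness of $Y$ enters: I take $y^*\in\bigcap_{j\in J^*} B_j$ and observe that $B^*:=\bigcap_{j\in J^*} B'_j$ is a ball containing $fy^*$, so (BB) at $y^*$ with $B^*$ yields $B\ni y^*$ with $f(B)=B^*$. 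The workhorse ultrametric principle here is that two balls sharing a point whose $f$-images are strictly nested must themselves be strictly nested in the same direction; since $B^*\subsetuneq B'_j$ for every $j\in J^*$ (as $J^*$ has no minimum), it forces $B\subsetuneq B_j$ for every $j$. A final application of (BB) inside $B$, at a preimage of a point of $B'_{i_0}\subseteq B^*$, then produces the desired $B_{i_0}\subseteq B$, contradicting maximality.
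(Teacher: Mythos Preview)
Your argument is correct and follows essentially the same route as the paper's proof: a Zorn's-lemma lift of the given nest in $Y'$ to a nest in $Y$, followed by a coinitiality (your ``cofinality'') argument that uses spherical completeness of $Y$ at the limit step and (BB) to extend the lift, contradicting maximality. The paper treats the ``smallest ball exists'' and ``no smallest ball'' cases in a single pass rather than splitting them, and for surjectivity it simply applies (BB) with $B'=Y'$ rather than with the closed ball $B(fy,z')$, but these are organizational differences only.

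One small point of precision: in the last step you assert $B_{i_0}\subseteq B$, but your ``workhorse principle'' needs strict nesting of the $f$-images, and $B'_{i_0}\subseteq B^*$ need not be strict. What you actually need (and what does follow, by the same principle applied with $B_j$ in place of $B$, since $B'_{i_0}\subsetneq B'_j$) is $B_{i_0}\subseteq B_j$ for every $j\in J^*$; that suffices for the enlarged family to be a lift and to contradict maximality. This is exactly how the paper phrases it.
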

\begin{proof}
Taking $B'=Y'$, we obtain the surjectivity of $f$.

Now we take any nest of balls $\{B'_j\mid j\in J\}$ in $Y'$. We have to
show that this nest has a nonempty intersection. We claim that in $Y$
there exists a nest of balls $B_i$, $i\in I$, maximal with the property
that
\begin{equation}                            \label{IJ}
\mbox{$I\subseteq J$, and for all $i\in I$, $f(B_i)=B'_i\>$.}
\end{equation}
To show this, we first take any $j\in J$ and choose some $y_j\in Y$
such that $fy_j\in B'_j\,$, making use of the surjectivity of $f$. As
$f$ satisfies (BB), we can choose a ball $B_j$ in $Y$ around $y_j$ and
such that $f(B_j)=B'_j\,$. So the nest $\{B_j\}$ has property
(\ref{IJ}). Hence, a maximal nest $\{B_i\mid i\in I\}$ with property
(\ref{IJ}) exists by Zorn's Lemma.

We wish to show that the balls $B'_i\,$, $i\in I$, are coinitial in the
nest $B'_j\,$, $j\in J$, that is, for every ball $B'_j$ there is some
$i\in I$ such that $B'_i\subseteq B'_j\,$. Once we have shown this we
are done: as $Y$ is spherically complete, there is some $y\in
\bigcap_{i\in I} B_i$, and
\[fy\,\in\, \bigcap_{i\in I} f(B_i) \>=\> \bigcap_{i\in I} B'_i
\>=\> \bigcap_{j\in J} B'_j\]
shows that $\bigcap_{j\in J} B'_j$ is non-empty.

Suppose the balls $B'_i\,$, $i\in I$, are not coinitial in the nest
$B'_j\,$, $j\in J$. Then there is some $j\in J$ such that $B'_j
\subsetuneq B'_i$ for all $i\in I$. Since $Y$ is spherically complete,
there is some $y\in \bigcap_{i\in I} B_i\,$. We have that $fy\in
\bigcap_{i\in I} B'_i=:B'$, and also that $B'_j \subseteq B'$. By
assumption, there is a ball $B$ around $y$ such that
$f(B)=B'$. If $B'$ happens to be the smallest ball among the $B'_i\,$,
say, $B'=B'_{i_0}$ with $i_0\in I$, then we just take $B=B_{i_0}\,$. If
$B'\subsetuneq B'_i\,$, then it follows that $B\subsetuneq B_i\,$. Hence
in all cases, $B\subseteq B_i$ for all $i$. Since $B'_j \subseteq B'$,
we can choose $\tilde{y}\in B$ such that $f\tilde{y}\in B'_j\,$. By
assumption, there is a ball $B_j$ around $\tilde{y}$ such that
$f(B_j)=B'_j\,$. Since $\tilde{y}\in B_i$ for all $i\in I$, we know that
$B_i\,$, $i\in I\cup\{j\}$ is a nest of balls. By construction, it has
property (\ref{IJ}). Since $j\notin I$, this contradicts our maximality
assumption on $I$. This proves that the balls $B'_i\,$, $i\in I$, must
be coinitial in the nest $B'_j\,$, $j\in J$.
\end{proof}

%
%
\subsection{Products}                       \label{sectpd}
Let $(Y_i,u_i)$, $i\in I$, be ultrametric spaces whose value sets
$u_i Y_i$ are all contained in a common ordered set, and assume that
$I$ is finite or that $\bigcup_{i\in I} u_i Y_i$ is well ordered. Then
their {\bf direct product}\index{product of ultrametric spaces}
will be the cartesian product $\prod_{i\in I} Y_i$ equipped with the
ultrametric
\[u:\>\prod_{i\in I} Y_i\times\prod_{i\in I}
Y_i \,\rightarrow\, \bigcup_{i\in I} u_i Y_i\cup\{\infty\}\]
defined by
\[u\,((y_i)_{i\in I}\,,\,(z_i)_{i\in I}):=\min_{i\in I}
u_i(y_i,z_i)\;.\]
We leave it to the reader to verify that this map satisfies (U1), (U2)
and (U3). Note that indeed every element of $\bigcup_{i\in I} u_i Y_i$
appears as the distance of two suitably chosen elements of
$\prod_{i\in I} Y_i\,$.

\begin{lemma}
Take $k\in I$ and let $\pi_k:\prod_{i\in I} Y_i\rightarrow Y_k$ denote
the
projection onto the $k$-th component. If $B$ is a ball in $(\prod_{i\in
I} Y_i,u)$, then for every $k\in I$, $\pi_k B$ is a ball in $(Y_i,u_i)$,
and
\begin{equation}                            \label{B=p}
B\>=\>\prod_{i\in I}\pi_i B\;.
\end{equation}
\end{lemma}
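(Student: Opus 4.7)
The plan is to reduce everything to closed balls first. The formula $u((y_i),(z_i)) = \min_i u_i(y_i,z_i)$ says directly that for any $\alpha \in \bigcup_i u_i Y_i \cup\{\infty\}$ and any $y=(y_i)\in\prod_i Y_i$,
\[B_\alpha(y) \>=\> \{(z_i) \mid u_i(y_i,z_i)\geq\alpha \text{ for all }i\} \>=\> \prod_{i\in I} B_\alpha(y_i),\]
where each factor $B_\alpha(y_i)=\{z_i\in Y_i\mid u_i(y_i,z_i)\geq\alpha\}$ is non-empty (it contains $y_i$) and is a closed ball in $Y_i$. In particular $\pi_k B_\alpha(y)=B_\alpha(y_k)$, and (\ref{B=p}) holds for closed balls.

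Next I pass to general balls. By the definition in the paper, $B=\bigcup_{\lambda}B_{\alpha_\lambda}(y^{(\lambda)})$ for some collection of closed balls sharing a common element $y^*$. Applying $\pi_k$ (which commutes with unions) gives $\pi_k B=\bigcup_\lambda B_{\alpha_\lambda}(y^{(\lambda)}_k)$; this is a union of closed balls in $Y_k$ all containing $y^*_k$, hence a ball in $Y_k$. The inclusion $B\subseteq\prod_i \pi_i B$ is immediate, and the real content is the reverse inclusion.

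To prove $\prod_i \pi_i B\subseteq B$, take $(z_i)\in\prod_i \pi_i B$. For each $i\in I$, pick an index $\lambda_i$ with $z_i\in \pi_i B_{\alpha_{\lambda_i}}(y^{(\lambda_i)})=B_{\alpha_{\lambda_i}}(y^{(\lambda_i)}_i)$, so that $u_i(y^{(\lambda_i)}_i,z_i)\geq\alpha_{\lambda_i}$. The main obstacle, and exactly where the hypothesis on $I$ enters, is collapsing the potentially many indices $\lambda_i$ into a single ball of the nest. Since the closed balls $B_{\alpha_\lambda}(y^{(\lambda)})$ share the point $y^*$, they may be rewritten as $B_{\alpha_\lambda}(y^*)$, and the chain is totally ordered by reverse order on the radii. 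I want to extract $\alpha_{\lambda_{i_0}}:=\min\{\alpha_{\lambda_i}\mid i\in I\}$: this minimum exists because either $I$ is finite, or the values live in the well ordered set $\bigcup_i u_iY_i$ (with $\infty$ added on top).

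With this minimum chosen, the ball $B_{\alpha_{\lambda_{i_0}}}(y^*)$ contains every $B_{\alpha_{\lambda_i}}(y^*)$, and in particular contains $y^{(\lambda_i)}$, so $u_i(y^*_i,y^{(\lambda_i)}_i)\geq\alpha_{\lambda_{i_0}}$ for each $i$. Combined with $u_i(y^{(\lambda_i)}_i,z_i)\geq\alpha_{\lambda_i}\geq\alpha_{\lambda_{i_0}}$, the ultrametric triangle inequality yields $u_i(y^*_i,z_i)\geq\alpha_{\lambda_{i_0}}$ for every $i$. Taking the minimum over $i$ gives $u(y^*,(z_i))\geq\alpha_{\lambda_{i_0}}$, so $(z_i)\in B_{\alpha_{\lambda_{i_0}}}(y^*)\subseteq B$, completing the proof of (\ref{B=p}).
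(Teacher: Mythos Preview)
Your proof is correct but takes a different route from the paper's. You first establish the product formula $B_\alpha(y)=\prod_i B_\alpha(y_i)$ for closed balls, then handle a general ball by writing it as a union of closed balls sharing a common point $y^*$ and extracting a minimum radius $\alpha_{\lambda_{i_0}}$ over $i\in I$; the well-ordering (or finiteness) hypothesis is invoked precisely here.

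The paper never decomposes $B$ into closed balls. For the first assertion it fixes $y\in B$ and shows directly that $\pi_k B=\bigcup_{z\in B} B(y_k,\pi_k z)$, a union of closed balls in $Y_k$ through $y_k$. For the inclusion $\prod_i\pi_i B\subseteq B$, given $x=(x_i)$ it picks for each $i$ an element $z^i\in B$ with $\pi_i z^i=x_i$, then chooses the index $j$ at which the minimum $u(y,x)=\min_i u_i(y_i,x_i)$ is attained and observes $u(y,x)=u_j(y_j,\pi_j z^j)\geq u(y,z^j)$; since $y,z^j\in B$ and $B$ is a ball, $x\in B$. Both arguments ultimately rest on the same fact---a minimum over $I$ is attained---but the paper's version avoids the extra closed-ball layer and works with a single element $z^j$ rather than a minimum of radii. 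Your approach has the advantage of making the role of the product structure of closed balls explicit.

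One small imprecision: when you say ``each factor $B_\alpha(y_i)$ is a closed ball in $Y_i$'', this need not be literally true if $\alpha\notin u_iY_i$, since closed balls are defined only for radii in the value set. Each such set is nonetheless a ball (a union of closed balls through $y_i$), so your conclusion that $\pi_k B$ is a ball still holds.
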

\begin{proof}
Since $B\ne\emptyset$, we have that $\pi_k B\ne\emptyset$ and we can
pick an element $y_k\in \pi_k B$ which is the projection of some
$y=(y_i)_{i\in I}\in B$. We claim that
\begin{equation}                            \label{B=u}
\pi_k B\>=\>\bigcup_{z\,\in\, B} B(y_k,\pi_k z)\;,
\end{equation}
where $B(y_k,\pi_k z)$ is understood to designate a ball in $(Y_k,u_k)$.
Since $\pi_k z\in B(y_k,\pi_k z)$, the inclusion ``$\subseteq$'' is
trivial. Now take $z=(z_i)_{i\in I}\in B$ and some $x_k\in B(y_k,\pi_k
z)$. Set $x=(x_i)_{i\in I}$ with $x_i:=y_i$ for $k\ne i\in I$. Then
$u(y,x)= u_k(y_k,x_k)\geq u_k(y_k,\pi_k z)\geq u(y,z)$ and therefore,
$x\in B$ and $x_k\in \pi_k B$. This proves that ``$\supseteq$'',
and hence equality holds in (\ref{B=u}). As a union of balls with common
element $y_k$, $\pi_k B$ is itself a ball.

The inclusion ``$\subseteq$'' in (\ref{B=p}) is trivial. For the
converse, pick an element $x=(x_i)_{i\in I}\in \prod_{i\in I}\pi_i B$.
Then there are elements $z^i\in B$ such that $x_i=\pi_i z^i$ for all
$i\in I$. Pick an arbitrary element $y\in B$. Then for some $j\in I$,
$u(y,x)=\min u_i(y_i,x_i)=\min u_i(y_i,\pi_i z^i)=u_j(y_j,\pi_j z^j)\geq
u(y,z^j)$. Since $y,z^j\in B$, it follows that $x\in B$. This proves
the inclusion ``$\supseteq$'' and hence equality in (\ref{B=p}).
\end{proof}

\begin{proposition}                               \label{prodsphc}
If the ultrametric spaces $(Y_i,u_i)$, $i\in I$, are
spherically complete, then the same holds for their
direct product $(\prod_{i\in I} Y_i\,,\,u)$.
\end{proposition}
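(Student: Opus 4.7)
The plan is to reduce spherical completeness of the product to spherical completeness of each factor by exploiting the preceding lemma, which tells us that every ball $B$ in the product decomposes as $B = \prod_{i\in I} \pi_i B$ with each $\pi_i B$ a ball in $Y_i$.

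First I would take an arbitrary nest of balls $\{B_j\mid j\in J\}$ in the product $\prod_{i\in I} Y_i$ and, for each coordinate $i\in I$, look at the family $\{\pi_i B_j\mid j\in J\}$. Since the projection map preserves inclusions and, by the lemma, each $\pi_i B_j$ is itself a ball in $(Y_i,u_i)$, this family is again a nest of balls. Spherical completeness of each $(Y_i,u_i)$ then produces an element $y_i\in \bigcap_{j\in J}\pi_i B_j$.

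Next I would assemble $y:=(y_i)_{i\in I}$ and verify that $y\in \bigcap_{j\in J} B_j$. This is the point where the lemma really pays off: using $B_j=\prod_{i\in I}\pi_i B_j$, the standard set-theoretic equality
\[
\bigcap_{j\in J}\prod_{i\in I}\pi_i B_j\>=\>\prod_{i\in I}\bigcap_{j\in J}\pi_i B_j
\]
(whose nontrivial inclusion ``$\supseteq$'' is just a coordinate-wise check) shows immediately that $y$ lies in the intersection of the nest, which is therefore nonempty.

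I do not expect any serious obstacle: the hypothesis that $I$ is finite or that $\bigcup_{i\in I} u_i Y_i$ is well ordered is already built into the definition of the product ultrametric (it guarantees the minimum defining $u$ exists) and is not needed a second time in the spherical completeness argument. The only points requiring a brief word are (i) that $\{\pi_i B_j\}_{j\in J}$ is genuinely a nest, which follows from $B_j\supseteq B_{j'}\Rightarrow \pi_i B_j\supseteq \pi_i B_{j'}$, and (ii) the product-intersection identity above; both are routine once the preceding lemma is in hand.
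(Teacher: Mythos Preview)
Your proof is correct and follows essentially the same route as the paper: project the nest to each factor via the preceding lemma, use spherical completeness of the factors to pick coordinates, and reassemble using $B_j=\prod_i\pi_i B_j$. The only cosmetic difference is that the paper verifies $\{\pi_i B_j\}_j$ is a nest by observing that nonempty $B_j\cap B_k$ forces nonempty $\pi_i B_j\cap \pi_i B_k$, whereas you invoke preservation of inclusion directly; both are fine.
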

\begin{proof}
Let ${\bf B} = \{B_j\mid j\in J\}$ be a nest of balls in the direct
product. We have to show that the intersection of {\bf B} is nonempty.
For every $i\in I$ we consider the projections $\pi_i B_j$ which by the
foregoing lemma are balls in $(Y_i,u_i)$. Since {\bf B} is a nest, all
intersections $B_j\cap B_k$ are non-empty and therefore, all
intersections $\pi_i B_j \cap \pi_i B_k$ are non-empty. This proves that
for each $i\in I$, $\{\pi_i B_j\mid j\in J\}$ is a nest of balls in
$(Y_i,u_i)$. By our assumption that the ultrametric spaces $(Y_i,u_i)$
are spherically complete, there exist elements $x_i\in \bigcap_{j\in J}
\pi_i B_j$ for each $i$. By equation (\ref{B=p}) of the foregoing lemma,
$(x_i)_{i\in I}\in B_j$ for every $j\in J$, hence $(x_i)_{i\in I}\in
\bigcap_{j\in J} B_j\,$.
\end{proof}

%
%
\subsection{Embeddings and isomorphisms}
Take ultrametric spaces $(Y,u)$ and $(Y',u')$ and a map
$f:Y\rightarrow Y'$. A map $\varphi: uY\rightarrow u'Y'$
will be called a \bfind{value map for $f$} if it preserves $\leq$ and
satisfies $u'(fy,fz)=\varphi u(y,z)$ for all $y,z\in Y$, $y\ne z$.
From the latter it follows that $f$ is injective since $u'(fy,fz)=
\varphi u(y,z)\in u'Y'$ means that $u'(fy,fz)\ne \infty$, i.e.,
$fy\ne fz$. We call $f$ an \bfind{embedding of ultrametric spaces
(with value map $\varphi$)} if in addition, $\varphi$ preserves $<$
and hence is itself injective. An embedding $f$ is called an
\bfind{isomorphism of ultrametric spaces} if it is onto.
In this case, also $\varphi$ is onto. We set $\varphi\infty=\infty$.

%
%
\section{Immediate maps on valued abelian groups}  \label{sectvg}
A \bfind{valued abelian group} $(G,v)$ is an abelian group $G$ endowed
with a \bfind{valuation} $v$. That is, $a\mapsto va$ is a map from $G$
onto $vG\cup\{\infty\}$, where $vG$ is a totally ordered set and
$\infty$ is an element bigger than all elements of $vG$, and the
following laws hold:\sn
{\bf (V1)} \ $va=\infty\Leftrightarrow a=0\,$,\n
{\bf (V2)} \ $v(a-b)\geq\min\{va,vb\}$ \ \ \ (ultrametric triangle
law).\sn
The \bfind{value set} of $(G,v)$ is $vG$.
For every valued abelian group $(G,v)$, the set $G$ endowed with the map
\[u:G\times G\rightarrow vG\cup\{\infty\}\>,\;\;\; u(a,b):=v(a-b)\]
is an ultrametric space. We note the following translations of
properties of the ultrametric:\sn
$\bullet \ v(a-b)>\min\{va,vb\}\>\Rightarrow\>va=vb$,\n
$\bullet \ va\ne vb\>\Rightarrow\>v(a-b)=\min\{va,vb\}$,\n
$\bullet \ va=v(-a)$.

\pars
A valued abelian group $(G,v)$ is called \bfind{spherically complete}
if the underlying ultrametric space $(G,u)$ is spherically complete.
Standard examples for spherically complete abelian groups are the
Hahn products (see, e.g., [KU4]).

\pars
Observe that in a valued abelian group, any ball around $0$ is a
subgroup. Since balls are unions of closed balls, this has only to be
proved for closed balls. Note that
\[B_\alpha(0)\>=\>\{z\in G\mid u(0,z)\geq\alpha\}\>=\>\{z\in G\mid
vz\geq\alpha\}\]
since $u(0,z)=v(0-z)=v(-z)=vz$. Take $a,b\in B_\alpha(0)$. Then
$va\geq\alpha$ and $vb\geq\alpha$, whence $v(a-b)\geq\alpha$ by (V2),
that is, $a-b\in B_\alpha(0)$. This proves that every $B_\alpha(0)$ and
every other ball $B$ containing $0$ is a subgroup of $G$. Let us note
that since every ball $B$ containing $0$ is a union of closed balls
$B_\alpha(0)$, it follows that
\[y\in B\mbox{ and } vz\geq vy\;\Rightarrow\; z\in B\;.\]

Every ball $\tilde{B}$ in $(G,v)$ can be written in the form $b+B$ where
$b\in \tilde{B}$ and $B=\{a-b\mid a\in\tilde{B}\}$ is a ball around $0$.
Hence the balls in $(G,v)$ are precisely the cosets with respect to the
subgroups that are balls.

%
%
\subsection{Immediate homomorphisms}            \label{sectav}
In this section we will give a handy criterion for group homomorphisms
to be immediate. Throughout, let $(G,v)$ and $(G',v')$ be valued abelian
groups.

\begin{proposition}                         \label{ATC}
Let $f:G\rightarrow G'$ be a map such that $f0=0$. If $f$ is immediate,
then for every $a'\in G'\setminus\{0\}$ there is some $a\in G$ such that
\sn
{\bf (IH1)} \ $v'(a'-fa)>v'a'$,\n
{\bf (IH2)} \ for all $b\in G$, $\;va\leq vb\>$ implies $\>v'fa\leq
v'fb\,$.
\sn
The converse is true if $f$ is a group homomorphism.
\end{proposition}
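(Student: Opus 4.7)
My plan is to translate (AT1) and (AT2) into (IH1) and (IH2) by using $y=0$ (forward direction), and conversely to reduce the attractor condition at an arbitrary pair $(y, z')$ to the origin-based statement by translating through the homomorphism (backward direction).

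For the forward direction, suppose $f$ is immediate and fix $a' \in G' \setminus \{0\}$. Since $f0 = 0 \neq a'$, the attractor property at $z' = a'$ with $y = 0$ yields $z \in G$ satisfying $u'(fz, a') > u'(0, a') = v'a'$ and $f(B(0,z)) \subseteq B(0, a')$. Setting $a := z$, the first condition is exactly (IH1). For (IH2), observe that $va \leq vb$ means $b \in B_{va}(0) = B(0,a)$, hence $fb \in B(0, a')$, i.e., $v'fb \geq v'a'$. A short calculation using (IH1) and the ultrametric law gives $v'fa = v'a'$, so $v'fb \geq v'fa$.

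For the converse, assume $f$ is a homomorphism and (IH1), (IH2) hold. Fix $z' \in G'$ and $y \in G$ with $fy \neq z'$; I need to produce $z$ verifying (AT1) and (AT2). The natural move is to set $a' := z' - fy \neq 0$, apply the hypothesis to obtain $a \in G$, and let $z := y + a$. Using that $f$ is additive, one computes $u'(fz, z') = v'(fy + fa - z') = v'(fa - a')$, which by (IH1) strictly exceeds $v'a' = u'(fy, z')$; this is (AT1). For (AT2), note $B(y, z) = y + B_{va}(0)$, so an element of $B(y,z)$ has the form $y + b$ with $vb \geq va$. Then $f(y+b) - fy = fb$, and (IH2) combined with $v'fa = v'a'$ (from (IH1) as above) gives $v'fb \geq v'a' = u'(fy, z')$, so $f(y+b) \in B(fy, z')$.

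The main obstacle, such as it is, is the bookkeeping in the converse: one must check that the condition ``$va \leq vb$'' in (IH2) lines up correctly with membership in the translated ball $B(y, z) - y$, and that $v'fa$ really equals $v'a'$ (not just $\geq$) so that the inclusion lands in the correct ball $B(fy, z')$ rather than a smaller or larger one. Both of these follow from a single application of the ultrametric triangle law to the strict inequality in (IH1). I expect no difficulty in the forward direction beyond identifying the right instance of the attractor definition to invoke.
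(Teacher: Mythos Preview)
Your proof is correct and follows essentially the same approach as the paper: both directions use the same translations ($y=0$, $z'=a'$ forward; $a'=z'-fy$, $z=y+a$ backward), the same appeal to the ultrametric triangle law to extract $v'fa=v'a'$ from (IH1), and the same identification of $B(y,z)$ with the translate of $B_{va}(0)$. There is nothing to add.
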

\begin{proof}
Suppose first that $f$ is immediate, and take any $a'\in G'$, $a'\ne 0$.
Set $z':=a'$ and $y:=0$. Take $z\in G$ such that conditions (AT1) and
(AT2) hold, and set $a:=z$. Then $v'(a'-fa)=u'(z',fz)>u'(z',fy)=v'(a'
-f0)= v'a'$. Hence, (IH1) holds. Also, we obtain from the ultrametric
triangle law that $v'a'=v'fa$. Further, condition (AT2) shows that
\begin{eqnarray*}
f(\{b\mid vb\geq va\}) & = & f(B(0,a))\>=\>f(B(y,z))\\
& \subseteq & B(fy,z')\>=\>B(0,a')\>=\>\{b'\mid v'b'\geq v'a'=v'fa\}\;.
\end{eqnarray*}
That is, $va\leq vb\Rightarrow v'fa\leq v'fb$, i.e., (IH2) holds.

\pars
For the converse, take any $y\in G$ and $z'\in G'\setminus\{fy\}$. Set
$a':=z'-fy\ne 0$. Choose $a\in G$ such that conditions (IH1) and (IH2)
hold, and set $z:=y+a$. Then $u'(z',fz)=v'(z'-fz)=v'(z'-fy-fa)=v'(a'-fa)
>v'a'=v'(z'-fy)=u'(z',fy)$. So (AT1) holds. Also, we obtain from the
ultrametric triangle law that $v'fa=v'(z'-fy)$. To show that (AT2)
holds, take any $x\in B(y,z)$. Then $v(x-y)\geq v(z-y)=va$. Hence by
(IH2), $v'(fx-fy)=v'f(x-y)\geq v'fa=v'(z'-fy)$, so $fx\in B(fy,z')$.
\end{proof}

By Theorem~\ref{MT}, we obtain:

\begin{theorem}                             \label{MTadd}
Let $f:G\rightarrow G'$ a group homomorphism which satisfies (IH1) and
(IH2). Assume further that $(G,v)$ is spherically complete. Then $f$ is
surjective and $(G',v')$ is spherically complete.
\end{theorem}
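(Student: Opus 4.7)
The plan is to obtain Theorem~\ref{MTadd} as a direct consequence of Proposition~\ref{ATC} combined with the Ultrametric Main Theorem (Theorem~\ref{MT}). The first thing to observe is that the associated ultrametric $u(a,b) := v(a-b)$ on $G$ makes $(G,u)$ spherically complete precisely when $(G,v)$ is, and similarly for $G'$; so the spherical completeness hypothesis carries over verbatim to the ultrametric side. Since $f$ is a group homomorphism we have $f0 = 0$, which places us in the setting of Proposition~\ref{ATC}.

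Next I would invoke the ``converse'' direction of Proposition~\ref{ATC}: the hypothesis that for every $a' \in G'\setminus\{0\}$ there is $a \in G$ satisfying (IH1) and (IH2), combined with the additive structure of $f$, gives that $f$ is immediate as a map of ultrametric spaces in the sense of Section~\ref{sectuv}. With $(G,u)$ spherically complete and $f$ immediate, Theorem~\ref{MT} applies directly and yields both the surjectivity of $f$ and the spherical completeness of $(Y',u')$, which translates back into the spherical completeness of $(G',v')$.

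There is really no obstacle here: all of the content has already been packaged into Proposition~\ref{ATC} and Theorem~\ref{MT}. The only point worth flagging is that the homomorphism hypothesis is genuinely used in the converse of Proposition~\ref{ATC}, because the reduction from an arbitrary pair $(y, z')$ to the normalized pair $(0, a')$ with $a' := z' - fy$ and $z := y+a$ requires the additive structure of both $G$ and $f$; the weaker condition $f0 = 0$ alone would not let us propagate (IH1) and (IH2) into the ultrametric conditions (AT1) and (AT2) at every basepoint $y$.
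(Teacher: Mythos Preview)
Your proposal is correct and follows exactly the paper's own reasoning: the paper simply states ``By Theorem~\ref{MT}, we obtain'' immediately after Proposition~\ref{ATC}, so the intended proof is precisely the combination of the converse direction of Proposition~\ref{ATC} (using that $f$ is a homomorphism) with Theorem~\ref{MT}. Your additional remark about why the homomorphism hypothesis is genuinely needed is accurate and matches the proof of Proposition~\ref{ATC}.
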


\begin{lemma}                               \label{fftilde}
Let $f,\tilde{f}:G\rightarrow G'$ be group homomorphisms. Suppose that
$f$ is immediate and for all $a\in G$,
\begin{equation}                            \label{vfftilde}
v'(\tilde{f}a-fa)\;>\;v'fa \mbox{ \ \ or \ \ } \tilde{f}a=fa=0\;.
\end{equation}
Then also $\tilde{f}$ is immediate.
\end{lemma}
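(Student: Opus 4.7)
The natural strategy is to use Proposition~\ref{ATC}, which reduces immediacy of a homomorphism to the verifiable conditions (IH1) and (IH2). Since $f$ is already immediate, it satisfies these conditions, and the hypothesis (\ref{vfftilde}) should let us transfer them to~$\tilde{f}$.

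The key preliminary observation is that for every $a\in G$ one has $v'\tilde{f}a = v'fa$. Indeed, if $fa=\tilde{f}a=0$ both sides equal $\infty$, and otherwise $fa\ne 0$ and $v'(\tilde{f}a-fa)>v'fa$, so the ultrametric triangle law forces $v'\tilde{f}a=v'fa$. This equality is what makes (IH2) transfer essentially for free: given $a,b\in G$ with $va\le vb$, if $f$ satisfies (IH2) then $v'fa\le v'fb$, and replacing both sides by $v'\tilde{f}a$ and $v'\tilde{f}b$ gives $v'\tilde{f}a\le v'\tilde{f}b$.

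For (IH1), fix $a'\in G'\setminus\{0\}$. By the forward direction of Proposition~\ref{ATC} applied to $f$, choose $a\in G$ such that $v'(a'-fa)>v'a'$ and $v'fa\le v'fb$ whenever $va\le vb$. Note that $fa\ne 0$, since otherwise $v'(a'-fa)=v'a'$, contradicting the strict inequality. Hence the first alternative of (\ref{vfftilde}) applies to this $a$, giving $v'(\tilde{f}a-fa)>v'fa=v'a'$, where the last equality again comes from the ultrametric triangle law applied to $v'(a'-fa)>v'a'$. Writing
\[a'-\tilde{f}a\;=\;(a'-fa)-(\tilde{f}a-fa),\]
both summands have value strictly greater than $v'a'$, so $v'(a'-\tilde{f}a)>v'a'$, which is (IH1) for $\tilde{f}$ at the chosen~$a$. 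Combined with the general (IH2) transfer above, this gives both conditions of Proposition~\ref{ATC} for $\tilde{f}$, whose converse direction (valid because $\tilde f$ is a homomorphism) concludes that $\tilde{f}$ is immediate.

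No step looks genuinely hard; the only subtlety is keeping track of the degenerate case $fa=0$ in (\ref{vfftilde}) and verifying that the choice of $a$ coming from immediacy of $f$ automatically avoids it. Once that is noted, the argument is a clean two-term ultrametric triangle estimate plus reuse of the witness provided by immediacy of~$f$.
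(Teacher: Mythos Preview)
Your proof is correct and follows essentially the same approach as the paper: both reduce to Proposition~\ref{ATC}, observe that (\ref{vfftilde}) forces $v'\tilde{f}a=v'fa$ so that (IH2) transfers immediately, and verify (IH1) for $\tilde{f}$ via the ultrametric triangle inequality applied to $a'-\tilde{f}a=(a'-fa)-(\tilde{f}a-fa)$. You are in fact slightly more careful than the paper in explicitly ruling out the degenerate case $fa=0$ for the witness~$a$.
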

\begin{proof}
If $f$ satisfies (IH1) of Proposition~\ref{ATC}, then $v'(a'-\tilde{f}a)
\geq \min \{v'(a'-fa),v'(\tilde{f}a-fa)\}>v'\tilde{f}a=v'a'$, showing
that also $\tilde{f}$ satisfies (IH1). Since (\ref{vfftilde}) implies
that $v'\tilde{f}a=v'fa$, $\tilde{f}$ will satisfy (IH2) whenever $f$
does. Hence by Proposition~\ref{ATC}, $\tilde{f}$ is immediate
whenever $f$ is.
\end{proof}


For an arbitrary map $f:G\rightarrow G'$ we will say that $a\in G$ is
\bfind{$f$-regular} if it is non-zero and satisfies condition (IH2). We
will denote the set of all $f$-regular elements by $\Reg(f)$. Then the
following holds:

\begin{proposition}                         \label{indreg}
If $f:G\rightarrow G'$ is an immediate group homomorphism, then
\[va\>\mapsto\>v'fa\]
for $a\in \Reg(f)$ induces a well defined and $\leq$-preserving map
from $\{va\mid a\in\Reg(f)\}$ onto $v'G'$.
\end{proposition}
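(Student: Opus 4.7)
The proposition makes three assertions: the assignment $va\mapsto v'fa$ is well defined on $\{va\mid a\in\Reg(f)\}$, it is $\leq$-preserving, and its image is all of $v'G'$. My plan is to read all three directly off the definitions together with Proposition~\ref{ATC}.

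First I would handle well-definedness and order preservation simultaneously. Take $a,b\in\Reg(f)$ with $va\leq vb$. Since $a$ satisfies (IH2) by definition of $f$-regular, we immediately obtain $v'fa\leq v'fb$. Applying this with $va=vb$ from both sides gives $v'fa=v'fb$, which is well-definedness; the same statement with general inequality is $\leq$-preservation. No extra work is needed here beyond observing that (IH2) is built into the definition of $\Reg(f)$.

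Next I would prove surjectivity onto $v'G'$. Fix any $\gamma\in v'G'$ and pick $a'\in G'\setminus\{0\}$ with $v'a'=\gamma$. Since $f$ is an immediate group homomorphism, in particular $f0=0$, so Proposition~\ref{ATC} applies and furnishes some $a\in G$ satisfying (IH1) and (IH2). Condition (IH1) reads $v'(a'-fa)>v'a'$, and the ultrametric triangle law then forces $v'fa=v'a'=\gamma$. Because $\gamma\in v'G'$ we have $\gamma\neq\infty$, hence $fa\neq 0$, hence $a\neq 0$. Together with (IH2) this shows $a\in\Reg(f)$, and by construction $v'fa=\gamma$, establishing surjectivity.

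The only subtlety worth noting is the verification that the element $a$ produced by Proposition~\ref{ATC} is nonzero, so that it really lies in $\Reg(f)$ (which excludes $0$ by definition); this is what the condition $\gamma\in v'G'$ (as opposed to $\gamma=\infty$) secures. Aside from that, the proof is essentially a direct translation of the defining properties of immediate homomorphisms and $f$-regular elements.
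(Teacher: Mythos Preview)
Your proof is correct and follows essentially the same approach as the paper's own proof. In fact, you are slightly more careful than the paper: you explicitly verify that the element $a$ supplied by Proposition~\ref{ATC} is nonzero (via $\gamma\neq\infty$) so that $a\in\Reg(f)$, a point the paper leaves implicit.
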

\begin{proof}
If $a,b\in \Reg(f)$ such that $va=vb$, then by (IH2), $v'fa\leq v'fb$
and $v'fa\geq v'fb$, whence $v'fa=v'fb$. This shows that the map is
well defined. Again because of (IH2), it preserves $\leq$. Now take any
$a'\in v'G'$, $a'\ne 0$. Then by (IH1), there is $a\in G$ such that
$v'(a'-fa)>v'a'$, whence $v'a'=v'fa$ by the ultrametric triangle law.
This proves that the map is onto.
\end{proof}

%
%
\subsection{Basic criteria}            \label{sectbc}
Even if the map $f$ that we consider on a valued abelian group is not a
homomorphism, the presence of addition helps us to give handy and
natural criteria for the map to be immediate. We just have to work a
little harder. In this section, we present basic criteria that will
cover all our applications in the non-additive case.

\begin{proposition}                         \label{BCb}
Take valued abelian groups $(G,v)$ and $(G',v')$, an element $b\in G$, a
ball $B$ around $0$ in $G$, a ball $B'$ around $0$ in $G'$, and a map
$f:b+B\rightarrow fb+B'$. Assume that $\phi:B\rightarrow B'$ is a map
such that for all $a'\in B'\setminus\{0\}$ there is $a\in\Reg(\phi)$
with the following properties:
\begin{equation}                            \label{(BC1)}
v'(a'-\phi a)\>>v'a'\>=\>v'\phi a\;,
\end{equation}
and
\begin{equation}                            \label{(BC2)}
v'(fy-fz\,-\,\phi(y-z))\>>\>v'\phi a
\mbox{ \ for all $\>y,z\in b+B$ such that $\>v(y-z)\geq va\,$.}
\end{equation}
\mn
Then $f$ is immediate.
\pars
If $\phi 0=0$ then (\ref{(BC2)}) needs to be checked only for $y\ne z$.
\end{proposition}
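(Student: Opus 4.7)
The plan is to verify that $f$ is immediate by directly checking the attractor conditions (AT1) and (AT2) for every target point, using $\phi$ as a kind of linearized surrogate of $f$ and the $\phi$-regularity of the element supplied by the hypothesis.

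\textbf{Setup.} Let $Y := b+B$ and $Y' := fb+B'$ as ultrametric subspaces of $G$ and $G'$. Fix $y \in Y$ and $z' \in Y'$ with $z' \neq fy$. Since $B'$ is a ball around $0$ and hence a subgroup of $G'$, the element $a' := z' - fy$ lies in $B'\setminus\{0\}$. Applying the hypothesis to $a'$, pick $a \in \Reg(\phi)$ satisfying (\ref{(BC1)}) and (\ref{(BC2)}). Since $a \in \Reg(\phi)$ we have $a \neq 0$; and since $a \in B$ and $B$ is a subgroup, the element $z := y + a$ lies in $Y$ and differs from $y$.

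\textbf{Checking (AT1).} I would write
\[
z' - fz \;=\; a' - \phi a \;-\; \bigl(fz - fy - \phi(z-y)\bigr),
\]
noting that $z - y = a$. By (\ref{(BC1)}), $v'(a'-\phi a) > v'a' = v'\phi a$. By (\ref{(BC2)}) applied with the pair $(z,y)$ (whose difference has value $va$, trivially $\geq va$), we get $v'(fz - fy - \phi a) > v'\phi a = v'a'$. The ultrametric triangle law then yields $v'(z'-fz) > v'a' = v'(z'-fy)$, which is precisely (AT1).

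\textbf{Checking (AT2).} Take any $x \in B(y,z) \cap Y$, i.e.\ $x \in b+B$ with $v(x-y) \geq v(z-y) = va$. I want $v'(fx - fy) \geq v'a'$. From (\ref{(BC2)}) applied to the pair $(x,y)$, which satisfies $v(x-y) \geq va$, I get
\[
v'\bigl(fx - fy - \phi(x-y)\bigr) \;>\; v'\phi a \;=\; v'a'.
\]
On the other hand, $a$ being $\phi$-regular means it satisfies (IH2) for $\phi$, so $va \leq v(x-y)$ forces $v'\phi a \leq v'\phi(x-y)$, giving $v'\phi(x-y) \geq v'a'$. Combining these via the ultrametric triangle law, $v'(fx - fy) \geq v'a'$, which is exactly $fx \in B(fy,z')$. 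This establishes (AT2), so $z'$ is an attractor for $f$ and hence $f$ is immediate.

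\textbf{The parenthetical remark.} If $\phi 0 = 0$, then whenever $y = z$ both sides of the inequality in (\ref{(BC2)}) collapse: $fy - fz - \phi(y-z) = 0$, so $v'(\cdot) = \infty > v'\phi a$ holds automatically. Hence in that case (\ref{(BC2)}) only needs to be verified for $y \neq z$.

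The argument has no genuine obstacle — it is a bookkeeping calculation in the ultrametric triangle law. The one subtle point to get right is the bookkeeping step that splits $z' - fz$ into $(a' - \phi a) - (fz - fy - \phi a)$, so that (\ref{(BC1)}) controls one summand and (\ref{(BC2)}) controls the other; and, for (AT2), the observation that the $\phi$-regularity of $a$ is exactly what converts the hypothesis $v(x-y) \geq va$ into the value bound $v'\phi(x-y) \geq v'\phi a$ needed to combine with (\ref{(BC2)}).
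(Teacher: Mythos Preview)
Your proof is correct and follows essentially the same approach as the paper's own proof: set $a'$ to be the difference between the target and $fy$, invoke the hypothesis to get a regular $a$, define $z$ by shifting $y$ by $a$, and then verify (AT1) and (AT2) via the ultrametric triangle law using (\ref{(BC1)}), (\ref{(BC2)}), and the regularity of $a$. The only cosmetic difference is a sign convention (the paper takes $a'=fy-z'$ and $z=y-a$, you take $a'=z'-fy$ and $z=y+a$), which is immaterial.
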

\begin{proof}
Take $z'\in fb+B'$ and $y\in b+B$ such that $z'\ne fy$. Applying
our assumption to $a':=fy-z'$ we find that there is some
$a\in\Reg(\phi)$ such that by (\ref{(BC1)}),
\begin{equation}                            \label{v'-phib}
v'(fy-z'\,-\,\phi a)\>>\>v'(fy-z')\>=\>v'\phi a\;,
\end{equation}
and such that (\ref{(BC2)}) holds. Set $z:=y-a\in y-B=y+B=b+B$. Then
$y-z=a$ and hence by (\ref{(BC2)}) and (\ref{v'-phib}),
\[v'(fy-fz-\phi (y-z))\>>\>v'\phi a\>=\>v'(fy-z')\;.\]
Consequently,
\begin{eqnarray*}
v'(z'-fz) & \geq &
\min\{v'(z'-fy\,+\,\phi a)\,,\,v'(fy-fz-\phi a)\} \\
& = & \min\{v'(fy-z'\,-\,\phi a)\,,\,v'(fy-fz-\phi (y-z))\}  \\
 & > & v'(fy-z')\>=\>v'(z'-fy)\;.
\end{eqnarray*}
Hence (AT1) holds. Now take $x\in B(y,z)\subseteq b+B$, i.e., $v(y-x)
\geq v(y-z)=va$. Then $v'\phi(y-x) \geq v'\phi a$ because $a\in
\Reg(\phi)$, and $v'(fy-fx\,-\,\phi (y-x))>v'\phi a$ by (\ref{(BC2)}).
Therefore,
\[v'(fy-fx)\>\geq\>\max\{v'(fy-fx\,-\,\phi (y-x))\,,\,v'\phi (y-x)\}
\>\geq\>v'\phi a\>=\>v'(fy-z')\>,\]
whence $fx\in B(fy,z')$. Hence (AT2) holds.

Assume that $\phi 0=0$. Observe that $\phi a\ne 0$ since $a'\ne 0$ and
$v'a'=v'\phi a$. Hence if $y=z$ then $v'(fy-fz\,-\,\phi(y-z))=v'0=
\infty>v'\phi a$, which shows that (\ref{(BC2)}) need only be checked
for $y\ne z$.
\end{proof}
\n
Note that by the ultrametric triangle law, the equality in (\ref{(BC1)})
is a consequence of the inequality. Further, observe that this
proposition proves the direction ``$\Leftarrow$'' of
Proposition~\ref{ATC}: if we take $B=G$, $B'=G'$ and $\phi=f$, then
(IH1) implies (\ref{(BC1)}) and (IH2) implies that $a\in\Reg(\phi)$,
while (\ref{(BC2)}) is trivially satisfied. Hence if for every $a'\in
G'\setminus\{0\}$ there is $a\in G$ such that (IH1) and (IH2) hold, then
the above proposition shows that $f$ is immediate.

\parm
The following is a special case of the above criterion,
with nicer properties.

\begin{proposition}                               \label{rangepsd}
Take valued abelian groups $(G,v)$ and $(G',v')$,
an element $b\in G$, a ball $B$ in $G$ around $0$, a ball $B'$ in $G'$
around $0$, and a map $f:b+B\rightarrow G'$. Assume that
\sn
{\bf (PC1)} \ $\phi: B\rightarrow B'$ is immediate,
\n
{\bf (PC2)} \ for all $y,z\in b+B$,
\[v'(fy-fz-\phi (y-z))\>>\> v'(fy-fz)\>=\>v'\phi (y-z)
\mbox{ \ \ or \ \ } fy-fz=\phi (y-z)=0\;.\]
\sn
Then $f(b+B)\subseteq fb+B'$, and $f:\; b+B\>\rightarrow\> fb+B'$ is
immediate.
\pars
If in addition $\phi$ is injective, then so is $f$, and if\/ $\phi$ is
an embedding of ultrametric spaces with value map $\,\varphi$, then so
is $f$.
%
\end{proposition}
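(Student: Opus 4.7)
The plan is to deduce everything from Proposition~\ref{BCb} applied with $\phi$ itself playing the role of the pseudo-companion. A quick preliminary observation: (PC2) applied with $y=z$ rules out the strict-inequality alternative (which would demand $v'(-\phi 0)>v'0=\infty$), so necessarily $\phi 0=0$. The containment $f(b+B)\subseteq fb+B'$ then drops out by applying (PC2) to the pair $(y,b)$: either $fy-fb=0$, or $v'(fy-fb)=v'\phi(y-b)$ together with $\phi(y-b)\in B'$ pushes $fy-fb$ into $B'$ via the absorption property that $w\in B'$ and $v'w'\ge v'w$ imply $w'\in B'$.

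For the immediacy of $f:b+B\to fb+B'$, I would verify the hypotheses of Proposition~\ref{BCb} in the obvious way. The subgroups $B$ and $B'$ inherit a valued-group structure, so the forward direction of Proposition~\ref{ATC} (which does not require $\phi$ to be a homomorphism, only $\phi 0=0$) applies to the immediate map $\phi:B\to B'$ and produces, for each $a'\in B'\setminus\{0\}$, an element $a\in B$ satisfying (IH1) and (IH2); necessarily $a\ne 0$ (else (IH1) would read $v'a'>v'a'$), so $a\in\Reg(\phi)$. Condition (BC1) is then (IH1) plus the ultrametric triangle law. For (BC2), given $y,z\in b+B$ with $v(y-z)\ge va$, observe $y-z\in B$ (since $B$ is a subgroup); if the second alternative of (PC2) applies then the left-hand side is $\infty$, and otherwise it exceeds $v'\phi(y-z)$, which by (IH2) is at least $v'\phi a$. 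Thus Proposition~\ref{BCb} delivers immediacy of $f$.

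The additional assertions about injectivity and embeddings fall out directly from (PC2) applied to pairs $y\ne z$: if $\phi$ is injective then $\phi(y-z)\ne 0$ whenever $y-z\ne 0$, so $v'(fy-fz)=v'\phi(y-z)<\infty$, which forces $fy\ne fz$; and if $\phi$ is an embedding of ultrametric spaces with value map $\varphi$ (so $v'\phi c=\varphi(vc)$ for $c\ne 0$), then for all $y\ne z$ in $b+B$ one has $v'(fy-fz)=v'\phi(y-z)=\varphi v(y-z)$, so $f$ is an embedding with the same value map $\varphi$. The only slightly delicate step in the plan is the preliminary deduction of $\phi 0=0$ from (PC2), without which neither the application of Proposition~\ref{ATC} nor the ``$y=z$'' edge of (BC2) would be automatic; everything else is a clean bookkeeping of the two earlier results.
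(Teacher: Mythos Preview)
Your proof is correct and follows essentially the same route as the paper: deduce $\phi 0=0$ from (PC2) with $y=z$, invoke the forward direction of Proposition~\ref{ATC} to obtain an $a\in\Reg(\phi)$ satisfying (IH1) and (IH2), feed this into Proposition~\ref{BCb}, and read off injectivity/embedding directly from (PC2). You are in fact slightly more careful than the paper in explicitly verifying the containment $f(b+B)\subseteq fb+B'$ before invoking Proposition~\ref{BCb} and in handling the second alternative of (PC2) when checking (BC2).
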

\begin{proof}
%
%
%
Taking $y=z$, we obtain from (PC2) that $\phi(0)=0$. So we can apply
Proposition~\ref{ATC} to find that $\phi$ satisfies (IH1) and (IH2).
Therefore, for $a'\in B' \setminus\{0\}$ we can choose $a\in
\Reg(\phi)\setminus\{0\}$ such that $v'(a'-\phi a)>v'a'$.

Take $y,z\in b+B$ such that $v(y-z)\geq va\,$. By the regularity of $a$,
$v'\phi(y-z) \geq v'\phi a\,$. Hence by (PC2), $v'(fy-fz-\phi(y-z))
=v'\phi (y-z)> v'\phi a$. Now it follows from Proposition~\ref{BCb} that
$f$ is immediate. If in addition, $\phi$ is injective, it follows from
(PC2) that also $f$ is injective. If $\phi$ is an embedding of
ultrametric spaces with value map $\,\varphi$, then $v'\phi (y-z)=
\varphi v(y-z)$ shows that also $f$ is an embedding with value map
$\varphi$.
%
\end{proof}

If the map $\phi$ satisfies the conditions (PC1) and (PC2) of the
foregoing proposition, it will be called a \bfind{pseudo-companion of
$f$ on $b+B$}.

\par\smallskip
We will later need the following fact:
\begin{lemma}                               \label{fftildepc}
Let the situation be as in Proposition~\ref{rangepsd} and let
$\phi,\tilde{\phi}:B\rightarrow B'$ be group homomorphisms. Suppose that
$v'(\tilde{\phi}a-\phi a)\;>\;v'\phi a$ or $\tilde{\phi}a=\phi a=0$ for
all $a\in G$. If $\phi$ is a pseudo companion for $f$ on $b+B$, then so
is $\tilde{\phi}$.
\end{lemma}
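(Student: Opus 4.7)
The plan is to verify the two defining properties (PC1) and (PC2) of a pseudo-companion for $\tilde\phi$, using the corresponding properties for $\phi$ together with the closeness hypothesis $v'(\tilde\phi a-\phi a)>v'\phi a$ (or both sides vanish).

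For (PC1), I would simply invoke Lemma~\ref{fftilde}: since $\phi$ is immediate and the hypothesis on $\phi,\tilde\phi$ is precisely the hypothesis (\ref{vfftilde}) of that lemma, we conclude that $\tilde\phi:B\to B'$ is immediate as well.

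For (PC2), let $y,z\in b+B$ and set $a:=y-z$. The argument splits according to which alternative of (PC2) holds for $\phi$. If $fy-fz=\phi a=0$, then the closeness hypothesis applied to $a$ cannot give $v'(\tilde\phi a-\phi a)>v'\phi a=\infty$, so it must give $\tilde\phi a=\phi a=0$, and the second alternative of (PC2) holds for $\tilde\phi$. Otherwise $v'(fy-fz-\phi a)>v'(fy-fz)=v'\phi a$; here the closeness hypothesis must yield $v'(\tilde\phi a-\phi a)>v'\phi a$ (the alternative $\tilde\phi a=\phi a=0$ is incompatible with $v'\phi a=v'(fy-fz)<\infty$). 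By the ultrametric triangle law this gives $v'\tilde\phi a=v'\phi a=v'(fy-fz)$, and writing
\[
fy-fz-\tilde\phi a\;=\;(fy-fz-\phi a)\,+\,(\phi a-\tilde\phi a),
\]
both summands have value strictly greater than $v'\phi a=v'\tilde\phi a=v'(fy-fz)$, so the same is true of the sum. This is the first alternative of (PC2) for $\tilde\phi$.

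There is no genuine obstacle; the whole argument is a routine application of the ultrametric triangle law once one unwinds the definitions, and the (PC1) half reduces cleanly to the already-proved Lemma~\ref{fftilde}. The only minor subtlety is to handle the degenerate case $\phi a=0$ correctly, which forces $\tilde\phi a=0$ as well and keeps us inside the second alternative of (PC2).
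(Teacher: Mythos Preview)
Your proof is correct and follows essentially the same route as the paper: invoke Lemma~\ref{fftilde} for (PC1), and for (PC2) split on whether $\phi(y-z)=0$, then use the ultrametric triangle law on the decomposition $fy-fz-\tilde\phi(y-z)=(fy-fz-\phi(y-z))+(\phi(y-z)-\tilde\phi(y-z))$. Your write-up is slightly more explicit than the paper's in checking the equality $v'\tilde\phi(y-z)=v'(fy-fz)$ needed for the full statement of (PC2), but there is no substantive difference.
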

\begin{proof}
Assume that $\phi$ is a pseudo-companion of $f$ on $b+B$. Then by
Proposition~\ref{fftilde}, also $\tilde{\phi}$ is immediate. Now take
$y,z\in b+B$. If $\phi(y-z)=0$ then by assumption, $\tilde{\phi}(y-z)=0$.
Otherwise, $v'(fy-fz-\tilde{\phi}(y-z))\geq\min\{v'(fy-fz-\phi(y-z)),
v'(\phi(y-z)-\tilde{\phi} (y-z))\}>v'\phi (y-z)=v'(fy-fz)$. This shows
that also $\tilde{\phi}$ is a pseudo-companion of $f$ on $b+B$.
\end{proof}

%
%
\section{Immediate maps on valued fields and their
finite-dimensional vector spaces}           \label{sectvf}
%
Let $(K,v)$ be a valued field. That is, $v$ is a valuation of its
additive group, $vK$ is a totally ordered abelian group, and the
following additional law holds:
\sn
{\bf (V3)} \ $v(ab)=va+vb$.
\sn
The \bfind{value group} of $(K,v)$ is $vK:=v(K^\times)$. Throughout this
paper, its \bfind{valuation ring} $\{y\in K\mid vy\geq 0\}$ will be
denoted by ${\cal O}$, and its \bfind{valuation ideal} $\{y\in K\mid vy>
0\}$ by ${\cal M}$. The field ${\cal O}/{\cal M}$ is called the
\bfind{residue field} and is denoted by $Kv$. Note that $c{\cal O}=
\{y\in K\mid vy\geq vc\}=B_{vc}(0)$ and $c{\cal O}=\{y\in K\mid vy>vc\}$.

A valued field $(K,v)$ is called \bfind{spherically complete} if the
underlying valued additive group is spherically complete (i.e., if the
underlying ultrametric space is spherically complete).

Main examples for spherically complete fields are the \bfind{power
series fields} $k((G))$ with their \bfind{canonical valuation}. Here,
$k$ can be any field and $G$ any ordered abelian group, and $k((G))$
consists of all formal sums $a=\sum_{g\in G} c_gt^g$ with $c_g
\in k$ and well ordered \bfind{support} $\mbox{supp}(a)=\{g\in G\mid
c_g\ne 0\}$. The canonical valuation on $k((G))$ is given by $va:= \min
\mbox{supp}(a)\in G$ and $v0:=\infty$. Its value group is $G$, and its
residue field is $k$.

An extension $(L,w)\supset (K,v)$ of valued fields is called
\bfind{immediate} if the canonical embedding of $vK$ in $wL$ and the
canonical embedding of $Kv$ in $Lw$ are onto. It is well known that this
holds if and only if as ultrametric spaces, $(K,v)$ is an immediate
subspace of $(L,v)$ (cf.\ [KU4]). A valued field is called
\bfind{maximally valued} if it admits no proper immediate extensions. It
was shown by Krull ([KR]; see also [G]) that for every valued field
$(K,v)$ there is a maximal immediate extension field; this is maximally
valued by definition.

A valued field is maximally valued if and only if it is spherically
complete (cf.\ [P1], [P2], [KU4]). This was essentially proved by
Kaplansky in [KA], using the notion of ``pseudo Cauchy sequence''
instead of ``nest of balls''. Every power series field is spherically
complete (cf.\ [P2], [KU4]). Hence it is maximally valued.

%
%
\subsection{The minimum valuation}
For every $n\in\N$, the valuation $v$ of $K$ induces a valuation of the
$n$-dimensional\linebreak
$K$-vector space $K^n$, called the \bfind{minimum valuation}:
\begin{equation}                            \label{minval}
v(a_1,\ldots,a_n)\>:=\>\min_{1\leq i\leq n} va_i
\end{equation}
for all $(a_1,\ldots,a_n)\in K^n$. This valuation satisfies (V1) and
(V2) for all $a,b\in K^n$, so $(K^n,v)$ is a valued abelian group.
Instead of (V3), it satisfies
\sn
{\bf (V3$'$)} \ $v(ca)=vc+va$ \ for all $c\in K$, $a\in K^n$.

\pars
Again, $u(a,b):=v(a-b)$ makes $K^n$ into an ultrametric space with value
set $vK$. If $0\ne c\in K$, then we write $(c{\cal O})^n$ for the
$n$-fold product $c{\cal O}\times\ldots\times c{\cal O}$ which is the
subgroup of vectors in $K^n$ whose entries all have value $\geq vc\,$;
$(c{\cal M})^n$ is defined similarly. Note that $(c{\cal O})^n=\{ca\mid
a\in {\cal O}^n\}= c{\cal O}^n$ and $(c{\cal M})^n= c{\cal M}^n$. For
$b\in K^n$, $c\in K$,
\[b+c{\cal O}^n\>=\>\{a\in K^n\mid v(a-b)\geq vc\}\>=\>B_{vc}(b)
\mbox{ \ and \ } b+c{\cal M}^n\>=\>\{a\in K^n\mid v(a-b)>vc\}\;.\]

We will say that $(K^n,v)$ is \bfind{spherically complete} if its
underlying ultrametric space $(K^n, u)$ is. Proposition~\ref{prodsphc}
of Section~\ref{sectpd} implies:
\begin{lemma}                               \label{KscKnsc}
If $(K,v)$ is spherically complete, then so is $(K^n,v)$.
\end{lemma}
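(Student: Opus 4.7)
The plan is to reduce this immediately to Proposition~\ref{prodsphc} of Section~\ref{sectpd}. The key observation is that the ultrametric on $K^n$ coming from the minimum valuation coincides with the product ultrametric defined there.

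First, I would set $Y_i := K$ and $u_i(a,b) := v(a-b)$ for $i = 1,\ldots,n$, so that each $(Y_i,u_i)$ is the ultrametric space underlying $(K,v)$. The common value set is $vK$, and since $I = \{1,\ldots,n\}$ is finite, the hypothesis of Section~\ref{sectpd} guaranteeing existence of the direct product is trivially met. Unwinding definitions, the product ultrametric on $\prod_{i=1}^n Y_i = K^n$ is
\[
u((a_i)_i,(b_i)_i) \;=\; \min_{1\leq i\leq n} u_i(a_i,b_i) \;=\; \min_{1\leq i\leq n} v(a_i-b_i) \;=\; v\bigl((a_1,\ldots,a_n) - (b_1,\ldots,b_n)\bigr),
\]
which is exactly the ultrametric induced by the minimum valuation (\ref{minval}).

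Next, by hypothesis $(K,v)$ is spherically complete, so each factor $(Y_i,u_i)$ is spherically complete. Proposition~\ref{prodsphc} then directly yields that the product $(K^n,u)$ is spherically complete, which by definition means that $(K^n,v)$ is spherically complete.

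There is no real obstacle here; the content of the lemma is entirely absorbed by the product proposition, and the only thing to check is the definitional matching of the two ultrametrics, plus the finiteness of $I$ (so that the well-ordering hypothesis is unneeded). Thus the proof is essentially a one-line application of Proposition~\ref{prodsphc}.
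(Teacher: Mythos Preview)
Your proposal is correct and matches the paper's approach exactly: the paper simply states that the lemma follows from Proposition~\ref{prodsphc}, and your write-up spells out precisely the definitional identification (product ultrametric $=$ ultrametric of the minimum valuation) that makes this citation legitimate.
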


%
%
\subsection{Pseudo-linear maps}     \label{sectplin}
Take $Y\subseteq K^n$, $0\ne s\in K$ and $f$ a map from $Y$ into $K^n$.
We will say that $f$ is \bfind{pseudo-linear with
pseudo-slope $s$} if for all $y,z\in Y$ such that $y\ne z$,
\begin{equation}                            \label{plm}
v(fy-fz-s(y-z))\>>\>v(fy-fz)\>=\>vs(y-z)\;.
\end{equation}
If $B$ is any ball in $(K^n,v)$ around $0$, then $sB$ is again a ball in
$(K^n,v)$ around $0$ and the map $B\ni x\mapsto sx\in sB$ is an
isomorphism of ultrametric spaces with value map $\,\varphi:\alpha
\mapsto\alpha+vs$. Hence pseudo-linear maps are maps with a particularly
simple pseudo-companion given by multiplication with a suitable scalar.
From Proposition~\ref{rangepsd} we obtain:

\begin{proposition}                               \label{range}
Take $b\in K^n$ and $B$ a ball in $(K^n,v)$ around $0$. Assume that $f:
b+B\rightarrow K^n$ is pseudo-linear with pseudo-slope $s$. Then
$f(b+B)\subseteq fb+sB$, and
\[f:\; b+B\>\rightarrow\> fb+sB\]
is an immediate embedding of ultrametric spaces with
value map $\,\varphi:\alpha\mapsto\alpha+vs$.

\pars
If in addition, $(K,v)$ is spherically complete, then $f$ is an
isomorphism of ultrametric spaces from $b+B$ onto $fb+sB$.
\end{proposition}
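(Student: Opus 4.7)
The plan is to apply Proposition~\ref{rangepsd} with the pseudo-companion $\phi:B\rightarrow sB$ given by $\phi(x):=sx$. The paragraph just before the statement already observes that this $\phi$ is an isomorphism of ultrametric spaces with value map $\varphi:\alpha\mapsto\alpha+vs$. In particular, $sB$ is a ball in $(K^n,v)$ around $0$, so this $\phi$ fits the framework of Proposition~\ref{rangepsd}; and being an isomorphism, $\phi$ is trivially immediate, which gives (PC1).

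Next I would verify (PC2). For $y\ne z$ in $b+B$, the defining condition of pseudo-linearity with pseudo-slope $s$ reads
\[v(fy-fz-s(y-z))\>>\>v(fy-fz)\>=\>vs(y-z)\;,\]
which is literally
\[v(fy-fz-\phi(y-z))\>>\>v(fy-fz)\>=\>v\phi(y-z)\;.\]
For $y=z$ both $fy-fz$ and $\phi(y-z)$ vanish, so the second clause of (PC2) holds. Hence $\phi$ is a pseudo-companion of $f$ on $b+B$, and Proposition~\ref{rangepsd} yields $f(b+B)\subseteq fb+sB$ together with the fact that $f:b+B\to fb+sB$ is immediate. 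Since $\phi$ is injective (as $s\ne 0$) and in fact an embedding of ultrametric spaces with value map $\varphi$, the last clause of Proposition~\ref{rangepsd} transfers both properties to $f$, so $f$ is an embedding of ultrametric spaces with the same value map $\varphi:\alpha\mapsto\alpha+vs$.

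For the final assertion, assume $(K,v)$ is spherically complete. By Lemma~\ref{KscKnsc}, $(K^n,v)$ is spherically complete, and since $b+B$ is a ball in $(K^n,v)$, it too is spherically complete. Applying Theorem~\ref{MT} to the immediate map $f:b+B\to fb+sB$ then gives that $f$ is surjective onto $fb+sB$. Combined with the embedding property already established, this makes $f$ an isomorphism of ultrametric spaces from $b+B$ onto $fb+sB$.

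There is no serious obstacle here: the whole section has been arranged so that pseudo-linear maps are, by design, the simplest instance of the pseudo-companion setup. The only thing to be attentive to is matching the inequality in the definition of pseudo-linearity with (PC2) exactly, and handling the degenerate case $y=z$ (for which the second disjunct of (PC2) is precisely what is needed).
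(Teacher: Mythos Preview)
Your proof is correct and follows exactly the route the paper intends: the paper derives this proposition directly from Proposition~\ref{rangepsd} (with no written proof), and you have spelled out precisely that derivation, including the use of Lemma~\ref{KscKnsc} and Theorem~\ref{MT} for the surjectivity under spherical completeness.
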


%
%
\subsection{Polynomial maps}         \label{sectpm}
Take any $n\in\N$. For any system $f=(f_1,\ldots,f_n)$ of $n$
polynomials in $n$ variables with coefficients in $K$, we denote by
$J_f(b)$ its Jacobian matrix at $b\in K^n$. We will denote by $J^*_f(b)$
the adjoint matrix of $J_f(b)$.

\begin{proposition}                               \label{findpl}
a) \ Take a polynomial $f\in {\cal O}[X]$ and $b\in {\cal O}$ such that
\[s\>:=\>f'(b)\>\ne\> 0\;.\]
Then $f$ induces a pseudo-linear map with pseudo-slope
$s$ from $b+s{\cal M}$ into $f(b)+s^2{\cal M}$.
\mn
b) \ Take $n$ polynomials in $n$ variables $f_1,\ldots,f_n\in {\cal O}
[X_1,\ldots,X_n]$ and $b\in {\cal O}^n$ such that
\[s\>:=\>\det J_f(b)\>\ne\> 0\]
for $f=(f_1,\ldots,f_n)$. If $vs=0$, then $J_f(b)$ is a
pseudo-companion of $f$ on $b+{\cal M}$ and $f$ induces an embedding
from $b+{\cal M}$ into $f(b)+{\cal M}$ with value map
$\varphi=\mbox{\rm id}$.

In the general case, $J^*_f (b)\,f$ induces a pseudo-linear
map with pseudo-slope $s$ from $b+s{\cal M}^n$ into $J^*_f (b) f(b)
+s^2 {\cal M}^n$
\end{proposition}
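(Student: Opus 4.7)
My plan is to deduce both parts from two-point Taylor expansions of the given polynomials and then estimate valuations using the hypothesis that the evaluation points lie in suitably small balls around $b$.

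For part~(a), the algebraic starting point is the factorization
\[
f(Y) - f(Z) \;=\; (Y-Z)\,f'(Z) + (Y-Z)^2\, p(Y,Z)
\]
for some $p \in {\cal O}[Y,Z]$, obtained by expanding each $Y^i - Z^i$. Applying the same trick to $f'(Z) - f'(b)$ yields $f'(z) = s + (z-b)\,q(z,b)$ with $q \in {\cal O}[Z,b]$, so that for $y,z \in b + s{\cal M}$,
\[
f(y) - f(z) \;=\; s(y-z) + (y-z)(z-b)\,q(z,b) + (y-z)^2\, p(y,z).
\]
Since $p(y,z)$ and $q(z,b)$ have non-negative value and $v(y-b), v(z-b), v(y-z) > vs$, both correction terms have value strictly greater than $v(y-z) + vs = v(s(y-z))$, which is pseudo-linearity. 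Specializing $z = b$ kills the middle term and gives $v(f(y) - f(b)) > 2vs$, so the image lies in $f(b) + s^2{\cal M}$.

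For part~(b) I would use the analogous multivariate identity
\[
f(y) - f(z) \;=\; J_f(b)(y-z) + Q(y,z) + R(y,z),
\]
where each component of $Q$ has value $\geq v(z-b) + v(y-z)$ (arising from the expansion of $J_f(z) - J_f(b)$) and each component of $R$ has value $\geq 2v(y-z)$. When $vs = 0$, the cofactor relation $J^*_f(b) J_f(b) = sI$ puts $J_f(b)$ in $GL_n({\cal O})$, so $J_f(b)$ is an isometric bijection of ${\cal M}^n$, hence an ultrametric isomorphism and in particular immediate; for $y,z \in b + {\cal M}$ the above identity then yields $v(f(y) - f(z) - J_f(b)(y-z)) > v(y-z) = v(J_f(b)(y-z))$, verifying (PC2). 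Proposition~\ref{rangepsd} together with the injectivity of $J_f(b)$ delivers the embedding statement with $\varphi = \mbox{id}$.

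In the general case I would set $g := J^*_f(b)\cdot f$ and multiply the Taylor identity by $J^*_f(b)$; using $J^*_f(b) J_f(b) = sI$ this produces
\[
g(y) - g(z) - s(y-z) \;=\; J^*_f(b)\bigl(Q(y,z) + R(y,z)\bigr),
\]
whose value is at least $\min\bigl(v(z-b) + v(y-z),\,2v(y-z)\bigr)$ because $J^*_f(b)$ has ${\cal O}$-entries. For $y,z \in b + s{\cal M}^n$ this strictly exceeds $vs + v(y-z) = v(s(y-z))$, proving pseudo-linearity with pseudo-slope $s$; setting $z = b$ kills $Q$ and yields $g(y) \in J^*_f(b)f(b) + s^2{\cal M}^n$. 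The only subtle point, rather than a real obstacle, is bookkeeping: keeping the strict versus weak inequalities straight when comparing $v(y-b)$, $v(z-b)$ and $v(y-z)$ via the ultrametric law, and noting that the auxiliary polynomials $p,q,Q,R$ automatically have ${\cal O}$-coefficients because $f \in {\cal O}[X]$.
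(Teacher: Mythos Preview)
Your argument is correct and follows essentially the same approach as the paper: a Taylor expansion combined with valuation estimates, then the adjugate identity $J^*_f(b)J_f(b)=sI$ for the multivariate case. The only cosmetic difference is that you expand $f(y)-f(z)$ first around $z$ and then correct $f'(z)$ (resp.\ $J_f(z)$) to $f'(b)$ (resp.\ $J_f(b)$), whereas the paper expands both $f(y)$ and $f(z)$ directly around $b$ and handles the difference of the higher-order terms $\varepsilon_y^i-\varepsilon_z^i$; the resulting estimates are identical.
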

\begin{proof}
Note that whenever we prove pseudo-linearity, the assertions about the
range of the functions will follow from Proposition~\ref{range}.
\sn
a): \ For a polynomial $f$ in one variable over a field of arbitrary
characteristic, we denote by $f\iT $ its $i$-th formal derivative
(cf.\ [KA], [KU4]). These polynomials are defined such that the
following Taylor expansion holds in arbitrary characteristic:
\begin{equation}                            \label{Tayl1}
f(b+\varepsilon)\>=\>f(b)+\sum_{i=1}^{\deg f}
\varepsilon^i f\iT (b)\;.
\end{equation}
Note that $f'=f^{[1]}$. Since $f\in {\cal O}[X]$, we have that
$f\iT\in {\cal O}[X]$. Since $b\in {\cal O}$, we also have that
$f\iT (b)\in {\cal O}$. Now take $y,z\in b+s{\cal M}$. Write
$y=b+\varepsilon_y$ and $z=b+\varepsilon_z$ with
$\varepsilon_y,\varepsilon_z\in s{\cal M}$. Then by (\ref{Tayl1}),
\begin{equation}                            \label{Tayone}
f(y)-f(z)\>=\>(\varepsilon_y-\varepsilon_z)f'(b)+\sum_{i=2}^{\deg f}
(\varepsilon_y^i-\varepsilon_z^i) f\iT (b)\>=\>
s(y-z) + S(b,\varepsilon_y,\varepsilon_z)\;.
\end{equation}
Since
\[\varepsilon_y^i-\varepsilon_z^i\>=\>(\varepsilon_y-\varepsilon_z)
(\varepsilon_y^{i-1}+(i-1)\varepsilon_y^{i-2}\varepsilon_z+\ldots+
(i-1)\varepsilon_y^{i-2}\varepsilon_z^{i-2}+\varepsilon_y^{i-1})
\in (\varepsilon_y-\varepsilon_z)s{\cal M}\]
for every $i\geq 2$, and since $f\iT (b)\in {\cal O}$, we find that
\[S(b,\varepsilon_y,\varepsilon_z)\in
(\varepsilon_y-\varepsilon_z)s{\cal M}\>=\>s(y-z){\cal M}\;.\]
This proves that
\begin{equation}                            \label{plf1}
v(f(y)-f(z)-s(y-z))\>=\>vS(b,\varepsilon_y,\varepsilon_z)\>>\>
vs(y-z)
\end{equation}
which implies that (\ref{plm}) holds. This proves a).
%
%

\mn
b): \ We write $J=J_f(b)$ and $J^*=J^*_f(b)$. Then $JJ^*=(\det J)
E=sE$ where $E$ is the $n\times n$ identity matrix. Note that $J,J^*\in
{\cal O}^{n\times n}$ by our assumptions on $f$ and $b$. If $y\in K^n$
then we can write $y=cz$ with $c\in K$, $vc=vy$, $z\in {\cal O}^n$ and
$vz=0$. Then $Jy=cJz\in c{\cal O}^n$, hence $vJy=vc+vJz\geq vc=vy$.
Similarly, $vJ^*y\geq vy$ for all $y\in K^n$.

Take $\varepsilon_1, \varepsilon_2\in s{\cal M}^n$. The
multidimensional Taylor expansion gives the following analogue of
(\ref{Tayone}):
\begin{equation}                            \label{Taymult}
f(b+\varepsilon_1)-f(b+\varepsilon_2)\>=\>J(\varepsilon_1-\varepsilon_2)
+ S(b,\varepsilon_1,\varepsilon_2)
\end{equation}
with
\begin{equation}                            \label{Taymultv}
vS(b,\varepsilon_1,\varepsilon_2)\>>\>vs(\varepsilon_1-\varepsilon_2)\;.
\end{equation}
Assume first that $vs=0$. Then also $J^{-1}=\frac{1}{s}J^*\in
{\cal O}^{n\times n}$, so for all $y\in K^n$, $vJ^{-1}y\geq vy$. But
then, $vy=vEy=vJ^{-1}Jy\geq vJy\geq vy$, so equality must hold. We find
that for all $y\in K^n$, $vJy=vy$ and similarly, $vJ^*y=vy$. In
particular, this yields that $J$ induces a value-preserving automorphism
of the valued abelian group $({\cal M}^n,+)$, and an isomorphism of
ultrametric spaces from ${\cal M}^n$ onto ${\cal M}^n$ with value map
$\varphi=\mbox{\rm id}$, with inverse maps induced by $J^{-1}$. From
(\ref{Taymult}) and (\ref{Taymultv}) we obtain that for
$y=b+\varepsilon_1$ and $z= b+\varepsilon_2$ in $b+{\cal M}$,
\[
v(f(y)-f(z)\>-\>J(y-z))\> >\>vs(y-z)\>=\>v(y-z)\>=\>vJ(y-z)\;.
\]
This proves that $J$ is a pseudo-companion of $f$ on $b+{\cal M}$. From
Proposition~\ref{rangepsd} we infer that $f$ induces an embedding from
$b+{\cal M}$ into $f(b)+J{\cal M}=f(b)+{\cal M}$ with value map
$\varphi=\mbox{\rm id}$.

\parm
Now we turn to the general case. We compute:
\begin{eqnarray*}
J^*f(y)-J^*f(z) & = & J^*(f(b+y-b)-f(b+z-b))\\
 & = & J^*J(y-z)\,+\,J^*S(b,y-b,z-b)\\
 & = & s(y-z) \,+\,J^*S(b,y-b,z-b)\;.
\end{eqnarray*}
By (\ref{Taymultv}),
\[vJ^*S(b,y-b,z-b) \>\geq\> vS(b,y-b,z-b) \> >\> vs(y-z)\;.\]
Hence,
\[v\left(J^*f(y)-J^*f(z)-s(y-z)\right)\>=\>
vJ^*S(b,y-b,z-b)\> >\>vs(y-z)\;.\]
This proves our assertion for the map $J^*_f (b)\,f$.
%
%
%
\end{proof}

Note that in the one-dimensional case ($n=1$), we may write $\det J_f(b)
= f'(b)$ and $J^*_f(b)=1$; in this way, the definition of $f_{\langle
b\rangle}$ in the one-dimensional case becomes a special case of the
definition for the multi-dimensional case.

If $vs>0$ in the multi-dimensional case, then in general $J_f(b)$ will
not be a pseudo-companion of $f$. It is necessary to transform $f$ in
order to obtain suitable pseudo-companions. We have shown above that
this can be done so that one even obtains pseudo-linear functions.

From Proposition~\ref{findpl} together with Propositions~\ref{range}
and~\ref{rangepsd}, we obtain:

\begin{theorem}                             \label{MT2}
Assume that $(K,v)$ is spherically complete.\sn
a) \ Take a polynomial $f\in {\cal O}[X]$ and $b\in {\cal O}$ such
that $s:=f'(b)\ne 0$. Then $f$ induces a pseudo-linear isomorphism of
ultrametric spaces from $b+s{\cal M}$ onto $f(b)+s^2{\cal M}$, with
pseudo-slope $s$.
\sn
b) \ Take $n$ polynomials in $n$ variables $f_1,\ldots,f_n\in {\cal O}
[X_1,\ldots,X_n]$ and $b\in {\cal O}^n$ such that $s:=\det J_f(b)\ne 0$
for $f= (f_1,\ldots,f_n)$. If $vs=0$, then $f$ induces an embedding of
ultrametric spaces from $b+{\cal M}$ onto $f(b)+{\cal M}$.

In the general case, $J^*_f (b)\,f$ induces a pseudo-linear isomorphism
of ultrametric spaces from $b+s{\cal M}^n$ onto $J^*_f (b)\,f(b)+ s^2
{\cal M}^n$, with pseudo-slope $s$.
%
%
\end{theorem}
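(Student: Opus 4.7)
The plan is simply to combine Proposition~\ref{findpl}, which produces the pseudo-linear/pseudo-companion structure of the relevant polynomial maps, with Propositions~\ref{range} and~\ref{rangepsd}, which upgrade such structure to an ultrametric isomorphism under the spherical completeness hypothesis. Throughout, I will use Lemma~\ref{KscKnsc} to transfer spherical completeness of $(K,v)$ to $(K^n,v)$, and the fact (noted at the start of Section~\ref{sectvf}) that every ball inside a spherically complete ultrametric space is itself spherically complete, so that all of the balls $b+s\mathcal{M}$, $b+\mathcal{M}^n$, $b+s\mathcal{M}^n$ inherit spherical completeness.

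For part a), Proposition~\ref{findpl}a) tells us that $f$ restricts to a pseudo-linear map with pseudo-slope $s$ from $b+s\mathcal{M}$ into $f(b)+s^2\mathcal{M}$. Setting $B:=s\mathcal{M}$ in Proposition~\ref{range}, we have $sB=s^2\mathcal{M}$, and the second (``in addition'') clause of Proposition~\ref{range} applies because $(K,v)$ is spherically complete. This directly yields that $f$ is an isomorphism of ultrametric spaces from $b+s\mathcal{M}$ onto $f(b)+s^2\mathcal{M}$, with value map $\varphi:\alpha\mapsto\alpha+vs$, giving the claim.

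Part b) splits into two subcases. When $vs=0$, Proposition~\ref{findpl}b) gives that $J_f(b)$ is a pseudo-companion of $f$ on $b+\mathcal{M}^n$ and that the restriction $f:b+\mathcal{M}^n\to f(b)+\mathcal{M}^n$ is an embedding of ultrametric spaces with value map $\mathrm{id}$, since multiplication by $J_f(b)$ is already a value-preserving ultrametric isomorphism of $\mathcal{M}^n$. Applying Proposition~\ref{rangepsd} gives that $f$ is immediate on $b+\mathcal{M}^n$; by Lemma~\ref{KscKnsc} the domain is spherically complete, so Theorem~\ref{MT} promotes this to surjectivity of $f$ onto $f(b)+\mathcal{M}^n$, and combined with the embedding property we obtain an isomorphism of ultrametric spaces. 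In the general case, Proposition~\ref{findpl}b) states that $J^*_f(b)\,f$ is pseudo-linear with pseudo-slope $s$ from $b+s\mathcal{M}^n$ into $J^*_f(b)f(b)+s^2\mathcal{M}^n$; applying Proposition~\ref{range} with $B:=s\mathcal{M}^n$, so that $sB=s^2\mathcal{M}^n$, and using spherical completeness of $(K^n,v)$ via Lemma~\ref{KscKnsc}, yields the asserted pseudo-linear isomorphism of ultrametric spaces.

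In short, no new ideas are needed: all the work has already been packaged into the two preceding propositions, and the only thing to verify is that the spherical completeness hypothesis on $(K,v)$ indeed propagates to the specific balls used in each case, which is immediate from Lemma~\ref{KscKnsc} and the closure of spherical completeness under passage to subballs. Thus the main ``obstacle'' is really just bookkeeping: matching the data $(b,B,s,\phi)$ of each assertion to the format in which Propositions~\ref{range} and~\ref{rangepsd} are stated.
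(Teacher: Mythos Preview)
Your proposal is correct and follows exactly the route the paper takes: the paper's entire proof of Theorem~\ref{MT2} is the one line ``From Proposition~\ref{findpl} together with Propositions~\ref{range} and~\ref{rangepsd}, we obtain'', and you have simply unpacked this, correctly invoking Lemma~\ref{KscKnsc} and Theorem~\ref{MT} to supply the surjectivity in the $vs=0$ subcase (where Proposition~\ref{rangepsd}, unlike Proposition~\ref{range}, does not itself include the spherical-completeness upgrade).
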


%
%
\subsection{Hensel's Lemma and Implicit Function Theorem revisited}
\label{sectHLr}
Let us apply Theorem~\ref{MT2} to prove that Hensel's Lemma
holds for every spherically complete valued field $(K,v)$. We
prove the following version of Hensel's Lemma, which is often called
``Newton's Lemma'':
\begin{theorem}                             \label{HL}
Let $(K,v)$ be a spherically complete valued field.
Then $(K,v)$ satisfies the one-dimensional Newton's Lemma:\n
Take $f\in {\cal O}[X]$ and assume that $b\in {\cal O}$ is such that
$vf(b)>2vf'(b)$. Then there exists a unique root $a$ of $f$ such that
$v(a-b)=vf(b)-vf'(b)>vf'(b)$.
\end{theorem}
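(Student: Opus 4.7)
The plan is to apply Theorem~\ref{MT2}(a) to $f$ at $b$. Set $s := f'(b)$. The hypothesis $vf(b) > 2vf'(b) = 2vs$ translates to $-f(b) \in s^2{\cal M}$, i.e., $0 \in f(b) + s^2{\cal M}$. Since $(K,v)$ is spherically complete, Theorem~\ref{MT2}(a) yields a pseudo-linear isomorphism of ultrametric spaces
\[f : b + s{\cal M} \longrightarrow f(b) + s^2{\cal M}\]
with pseudo-slope $s$. Pulling $0$ back through this bijection gives a unique element $a \in b + s{\cal M}$ with $f(a) = 0$, and the containment $a \in b + s{\cal M}$ is exactly the inequality $v(a-b) > vs = vf'(b)$.

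It remains to pin down the exact value of $v(a-b)$ and to confirm uniqueness in the sense stated. For the former, I would invoke the pseudo-linearity estimate (the inequality~(\ref{plf1}) established in the proof of Proposition~\ref{findpl}) with $y := a$ and $z := b$, which gives
\[v\bigl(f(a) - f(b) - s(a-b)\bigr) \;>\; vs + v(a-b)\,.\]
Substituting $f(a) = 0$ rewrites the left-hand side as $v(f(b) + s(a-b))$; this strict inequality forces, by the ultrametric triangle law, the two summands $f(b)$ and $s(a-b)$ to have the same value. Hence $vf(b) = vs + v(a-b)$ and therefore $v(a-b) = vf(b) - vf'(b)$, which is the asserted equality.

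For uniqueness, any second root $\tilde{a}$ of $f$ with $v(\tilde{a} - b) > vf'(b)$ automatically lies in $b + s{\cal M}$, and the injectivity furnished by Theorem~\ref{MT2}(a) then forces $\tilde{a} = a$. The entire argument reduces to a direct application of Theorem~\ref{MT2}(a); there is no genuine obstacle. The only care needed is in the two dictionary translations: between the ball-theoretic statement $0 \in f(b) + s^2{\cal M}$ and the classical hypothesis $vf(b) > 2vf'(b)$, and between pseudo-linearity with slope $s$ and the precise value $v(a-b) = vf(b) - vf'(b)$.
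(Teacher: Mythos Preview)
Your proof is correct and follows essentially the same route as the paper: invoke Theorem~\ref{MT2}(a), observe that $0\in f(b)+s^2{\cal M}$, and read off existence and uniqueness from the bijection. The only cosmetic difference is in computing $v(a-b)$: the paper uses the value map $\alpha\mapsto\alpha+vs$ of the ultrametric isomorphism to get $v(a-b)=v(f(a)-f(b))-vs=vf(b)-vf'(b)$ in one line, whereas you re-derive the same equality from the pseudo-linearity estimate~(\ref{plf1}) --- same content, slightly longer.
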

\begin{proof}
The inequality $vf(b)>2vf'(b)$ implies that $s:=f'(b)\ne 0$. Hence by
Theorem~\ref{MT2}, $f$ induces a pseudo-linear isomorphism of
ultrametric spaces from $b+s{\cal M}$ onto $f(b)+s^2{\cal M}$, with
pseudo-slope $s$. Since $vf(b)>2vf'(b)=vs^2$, we have that $f(b)\in
s^2{\cal M}$, that is, $f(b)+s^2{\cal M}=s^2{\cal M}$. Therefore, $0\in
f(b)+s^2{\cal M}$. Since $f$ induces a bijection from $b+s{\cal M}$ onto
$f(b)+s^2 {\cal M}$, there is a unique $a\in b+s{\cal M}$ such that
$f(a)=0$. We have that $v(a-b)=v(f(a)-f(b))-vf'(b)=vf(b)-vf'(b)>vf'(b)$.
\end{proof}

Here is the multi-dimensional version:
\begin{theorem}                             \label{sphcomHL}
Let $(K,v)$ be a spherically complete valued field. Then $(K,v)$
satisfies the multi-dimensional Newton's Lemma:\n
Let $f= (f_1,\ldots,f_n)$ be a system of $n$ polynomials in $n$
variables with coefficients in ${\cal O}$. Assume that $b\in
{\cal O}^{\,n}$ is such that $vf(b)>2v\det J_f(b)$. Then there
exists a unique $a\in {\cal O}^{\,n}$ such that $f(a)=0$ and
$v(a-b)= vJ^*_f(b)f(b)-v\det J_f(b) > v\det J_f(b)$.
\end{theorem}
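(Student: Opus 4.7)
The plan is to mirror the proof of the one-dimensional Newton's Lemma (Theorem~\ref{HL}), using part (b) of Theorem~\ref{MT2} in place of part (a). Set $s := \det J_f(b)$. The hypothesis $vf(b) > 2vs$ first forces $vs < \infty$, i.e., $s \neq 0$, so the hypothesis of Theorem~\ref{MT2}(b) is satisfied. That theorem yields that $J^*_f(b)\,f$ induces a pseudo-linear isomorphism of ultrametric spaces from $b + s{\cal M}^n$ onto $J^*_f(b)f(b) + s^2{\cal M}^n$, with pseudo-slope $s$.

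Next I would show that $0$ lies in the target ball. Since $J^*_f(b)$ has entries in $\cal O$ (as observed in the proof of Proposition~\ref{findpl}), we have $vJ^*_f(b) f(b) \geq vf(b) > 2vs$, so $J^*_f(b)f(b) \in s^2{\cal M}^n$. Consequently $J^*_f(b)f(b) + s^2{\cal M}^n = s^2{\cal M}^n$, which contains $0$. The isomorphism therefore produces a unique $a \in b + s{\cal M}^n$ with $J^*_f(b) f(a) = 0$. Because $s \neq 0$, the matrix $J_f(b)$ is invertible over $K$, hence so is $J^*_f(b) = s\,J_f(b)^{-1}$; thus $J^*_f(b)f(a) = 0$ is equivalent to $f(a) = 0$.

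To identify the value $v(a-b)$, I would invoke the pseudo-linearity (\ref{plm}) of $J^*_f(b)\,f$ at $y=b$, $z=a$: the equality part gives $v(J^*_f(b)f(b) - J^*_f(b)f(a)) = vs(b-a)$, and since $J^*_f(b)f(a)=0$ this rearranges to $v(a-b) = vJ^*_f(b)f(b) - v\det J_f(b)$. The inclusion $a \in b + s{\cal M}^n$ then automatically gives the strict inequality $v(a-b) > v\det J_f(b)$. For uniqueness, observe that the required condition $v(a-b) > v\det J_f(b)$ is exactly the condition $a \in b + s{\cal M}^n$, on which $J^*_f(b)\,f$ is a bijection, so our $a$ is the only preimage of $0$ there.

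I do not anticipate a serious obstacle; the only subtlety is that Theorem~\ref{MT2}(b) produces a bijection for the auxiliary map $J^*_f(b)\,f$ rather than for $f$ itself, so one must separately observe that $J^*_f(b)$ is invertible over $K$ in order to transfer the unique zero back to a unique zero of $f$.
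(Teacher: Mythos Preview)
Your proposal is correct and follows essentially the same route as the paper: apply Theorem~\ref{MT2}(b) to get that $J^*_f(b)\,f$ is a pseudo-linear isomorphism from $b+s{\cal M}^n$ onto $J^*_f(b)f(b)+s^2{\cal M}^n$, observe that $0$ lies in the target because $J^*_f(b)\in{\cal O}^{n\times n}$ forces $vJ^*_f(b)f(b)\geq vf(b)>2vs$, pull back to a unique $a$, transfer to $f(a)=0$ via invertibility of $J^*_f(b)$, and read off $v(a-b)$ from the value map (equivalently, from the equality in (\ref{plm})). The only cosmetic difference is that you cite (\ref{plm}) directly for the value computation whereas the paper phrases it via the isomorphism's value map $\alpha\mapsto\alpha+vs$; these are the same fact.
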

\begin{proof}
The inequality $vf(b)>2v\det J_f(b)$ implies that $s:=\det J_f(b)\ne 0$.
Hence by Theorem~\ref{MT2}, $J^*f$ induces an isomorphism of
ultrametric spaces from $b+s{\cal M}^n$ into $J^* f(b) +s^2
{\cal M}^n$, where $J^*=J^*_f(b)$. Since $vf(b)>vs^2$, we have that
$f(b) \in s^2{\cal M}^n$ and hence also $J^* f(b)\in s^2{\cal M}^n$
(since $J^*\in {\cal O}^{n\times n}$).
That is, $J^*f(b)+s^2{\cal M}^n= s^2{\cal M}^n$. Therefore, $0\in
J^*f(b)+ s^2{\cal M}^n$. Since $J^*f$ induces a bijection from
$b+s{\cal M}^n$ onto $J^*s^{-2}f(b)+ {\cal M}^n$, there is a unique
$a\in b+s{\cal M}^n$ such that $J^*f(a)=0$. Since $J^*$ is
invertible, we have that $f(a)=0\Leftrightarrow J^*f(a)=0$. Hence,
$a$ is the unique element in $b+s{\cal M}^n$ such that $f(a)=0$.
We have that $v(a-b)=v\left(J^*_f(b)f(a)-J^*_f(b)f(b)\right)
-v\det J_f(b) =vJ^*_f(b)f(b)-v\det J_f(b)> v\det J_f(b)$.
\end{proof}

Note that like in the one-dimensional case, also in the multi-dimensional
case the proof of Newton's Lemma can be reduced by transformation to a
simpler case where we would in fact obtain the identity as a
pseudo-companion. But as we have already shown that even in the
general case we can derive suitable pseudo-linear maps from $f$, it is
much easier to employ them directly in the proof of the
multidimensional Newton's Lemma.

\pars
A valued field $(K,v)$ is called \bfind{henselian} if the extension of
$v$ to the algebraic closure $\tilde{K}$ of $K$ is unique. It is well
known that this holds if and only if $(K,v)$ satisfies the
one-dimensional Newton's Lemma (see, e.g., [KU4]). We are now going to
show that the multi-dimensional Newton's Lemma holds in every henselian
field.

\begin{theorem}                             \label{multhens}
A valued field $(K,v)$ is henselian if and only if it
satisfies the multidimensional Newton's Lemma.
\end{theorem}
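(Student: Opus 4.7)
The plan is to handle the two directions separately. The ``only if'' direction is immediate: setting $n=1$ in the multidimensional Newton's Lemma recovers the one-dimensional Newton's Lemma, which by the discussion preceding the theorem characterizes henselianity.

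For the converse, assume $(K,v)$ is henselian and take $f=(f_1,\ldots,f_n)\in {\cal O}[X_1,\ldots,X_n]^n$ together with $b\in {\cal O}^n$ satisfying $vf(b)>2v\det J_f(b)$. My approach is to embed $(K,v)$ into a spherically complete valued field that contains $\tilde K$, apply Theorem~\ref{sphcomHL} there to produce a unique solution, and then use Galois invariance to conclude that this solution actually lies in $K^n$. Concretely: since $(K,v)$ is henselian, $v$ extends uniquely to a valuation (still called $v$) on the algebraic closure $\tilde K$. Choose a maximal immediate extension $(L,w)$ of $(\tilde K,v)$; then $(L,w)$ is spherically complete, and $K\subseteq\tilde K\subseteq L$ as valued fields.

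Applying Theorem~\ref{sphcomHL} inside $L$, we obtain a unique $a\in L^n$ with $f(a)=0$ and $w(a-b)>v\det J_f(b)$. Since $w(a-b)>v\det J_f(b)\geq 0$, ultrametric continuity of the polynomial $\det J_f$ (which has coefficients in ${\cal O}$) gives $w(\det J_f(a)-\det J_f(b))\geq w(a-b)>v\det J_f(b)$. Therefore $w\det J_f(a)=v\det J_f(b)<\infty$, and in particular $\det J_f(a)\ne 0$. By the Jacobian criterion, this non-vanishing forces each coordinate $a_i$ to be separable algebraic over $K$, so $a\in (K\sep)^n\subseteq\tilde K^n\subseteq L^n$.

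For the descent step, fix any $\sigma\in\Gal(K\sep/K)$. Because $v$ extends uniquely to $\tilde K$, both $w$ and $w\circ\sigma$ are extensions of $v$ to $\tilde K$ and hence coincide, so $\sigma$ is $w$-isometric on $\tilde K$. Applying $\sigma$ to $f(a)=0$ (whose coefficients are $\sigma$-fixed) yields $f(\sigma a)=0$, and $w(\sigma a-b)=w(\sigma(a-b))=w(a-b)>v\det J_f(b)$. So $\sigma a\in L^n$ meets the same conditions as $a$, and the uniqueness clause of Theorem~\ref{sphcomHL} forces $\sigma a=a$. Since this holds for all $\sigma\in\Gal(K\sep/K)$, we conclude $a\in K^n$, giving the desired solution. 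The main technical obstacle is the separability step, where the non-vanishing of the Jacobian at $a$ must be turned into the statement that $K(a)/K$ is separable; in positive characteristic this requires the Jacobian criterion for smoothness rather than a direct computation. Once that is in hand, the Galois descent is a clean application of the uniqueness part of Theorem~\ref{sphcomHL}.
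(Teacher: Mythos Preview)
Your proof is correct and follows the same strategy as the paper (pass to a spherically complete overfield, apply Theorem~\ref{sphcomHL}, use the Jacobian criterion to get separability over $K$, then Galois-descend via the uniqueness clause); the only variation is that you take the maximal immediate extension of $(\tilde{K},v)$ rather than of $(K,v)$, which makes the descent marginally cleaner since $\tilde{K}\subseteq L$ and hence all conjugates $\sigma a$ visibly lie in $L$. One quibble: you have the labels reversed---the trivial $n=1$ specialization is the ``if'' direction (multidimensional Newton's Lemma $\Rightarrow$ henselian), while the argument you give under ``the converse'' is in fact the ``only if'' direction.
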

\begin{proof}
$\Rightarrow$:\ \ Let $(K,v)$ be henselian. Take $(L,v)$
to be a maximal immediate extension of $(K,v)$. Then $(L,v)$ is
spherically complete. By the foregoing theorem, $(L,v)$ satisfies the
multidimensional Newton's Lemma. Denote by ${\cal O}$ the valuation
ring of $K$, and by ${\cal O}_L$ that of $L$. Now assume that the
hypothesis of the multidimensional Newton's Lemma is satisfied by a
system $f$ of polynomials with coefficients in ${\cal O}$ and by $b\in
{\cal O}^{\,n}$. It follows that there is a unique $a=(a_1,\ldots,a_n)
\in {\cal O}_L ^{\,n}$ such that $f(a)=0$ and $v(a -b)>v\det J_f (b)$.
From the latter, it follows that $v\det J_f (a)= v\det J_f (b)$ and in
particular, $\det J_f(a) \not=0$. Now [L], Chapter X, \S7, Proposition
8, shows that the elements $a_1,\ldots,a_n$ are separable algebraic over
$K$. On the other hand, for every $\sigma\in\mbox{Aut}\,(\tilde{K}|K)$,
the element $\sigma a=(\sigma a_1,\ldots,\sigma a_n)$ satisfies
$f(\sigma a)=\sigma f (a)=0$ and $v(\sigma a-b) =\min_i v(\sigma a_i-
b_i)=\min_i v\sigma (a_i-b_i)=\min_i v(a_i-b_i)= v(a -b)>v\det J_f (b)$
(note that $v\sigma=v$ because $(K,v)$ is henselian). By the uniqueness
of $a$, it follows that $\sigma a=a$ for every $\sigma\in\mbox{Aut}\,
(\tilde{K}|K)$, that is, $a\in K^n$, as required.
\mn
$\Leftarrow$:\ \ If $n=1$, then $\det J_f(b)= f'_1(b_1)$, and the
assertion is precisely the assertion of the one-dimensional Newton's
Lemma. Hence the multidimensional Newton's Lemma implies that $(K,v)$ is
henselian.
\end{proof}

Using the multidimensional Newton's Lemma, one can prove the
multidimensional\linebreak
\bfind{Implicit Function Theorem}:
\begin{theorem}
Let $(K,v)$ be a henselian field, and let $f_1,\ldots,f_n\in
{\cal O}[X_1,\ldots,X_m, Y_1,\ldots,Y_n]$ with $m<n$. Set
$Z=(X_1,\ldots,X_m, Y_1,\ldots,Y_n)$ and
\[J(Z):=\left(
\begin{array}{ccc}
\frac{\partial f_1}{\partial Y_1}(Z) &\ldots&
\frac{\partial f_1}{\partial Y_n}(Z)\\
\vdots & & \vdots\\
\frac{\partial f_m}{\partial Y_1}(Z) &\ldots&
\frac{\partial f_m}{\partial Y_n}(Z)
\end{array}\right)\;\;.\]
Assume that $f_1,\ldots,f_n$ admit a common zero $z= (x_1,\ldots,x_m,
y_1,\ldots,y_n)\in {\cal O}^{m+n}$ and that the determinant of $J(z)$ is
nonzero. Then for all $(x'_1,\ldots,x'_m)\in {\cal O}^m$ with
$v(x_i-x'_i) >2v\det J(z)$, $1\leq i\leq m$, there exists a unique tuple
$(y'_1,\ldots, y'_n)\in {\cal O}^n$ such that $(x'_1,\ldots,x'_m,y'_1,
\ldots,y'_n)$ is a common zero of $f_1,\ldots,f_m\,$, and
\[\min_{1\leq i\leq n} v(y_i-y'_i)\geq \min_{1\leq i\leq m}
v(x_i-x'_i)-v\det J(z)\;.\]
\end{theorem}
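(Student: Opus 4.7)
The plan is to reduce the statement to the multi-dimensional Newton's Lemma (Theorem~\ref{multhens}) by freezing the $X$-variables at their perturbed values. Set $\delta := \min_{1\leq i\leq m} v(x_i-x'_i)$ and $\gamma := v\det J(z)$; by hypothesis $\delta > 2\gamma$. Define the system of $n$ polynomials in $n$ variables
\[
g_i(Y_1,\ldots,Y_n)\>:=\>f_i(x'_1,\ldots,x'_m,Y_1,\ldots,Y_n)\,\in\,{\cal O}[Y_1,\ldots,Y_n]\;,
\]
and let $b:=(y_1,\ldots,y_n)\in{\cal O}^{\,n}$. My goal is to verify the hypothesis $vg(b)>2v\det J_g(b)$ of Theorem~\ref{multhens} for the system $g$ at $b$, so that it yields the required unique tuple $y'$.

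First I would compare $J_g(b)$ with $J(z)$. The coefficients of each partial derivative $\partial f_i/\partial Y_j$ lie in ${\cal O}$, so a Taylor expansion in the $X$-variables around $x$ shows that every entry of the matrix $J_g(b)-J(z)$ has valuation at least $\delta>2\gamma$. Multilinearity of the determinant, combined with the fact that all entries of $J(z)$ (and of $J_g(b)$) lie in ${\cal O}$, then yields $v(\det J_g(b)-\det J(z))>\gamma$, and hence $v\det J_g(b)=\gamma$ by the ultrametric triangle law.

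Next I would estimate $vg(b)$. Because $f(z)=0$, Taylor expansion in the $X$-variables around $x$ gives $g_i(b)=f_i(x',y)-f_i(x,y)$, which is an ${\cal O}$-linear combination of the differences $x'_k-x_k$ (with higher-order corrections of even larger valuation, since $x,x'\in{\cal O}^m$). Hence $vg_i(b)\geq\delta$ for every $i$, and so $vg(b)\geq\delta>2\gamma=2v\det J_g(b)$. Applying Theorem~\ref{multhens} to $g$ at $b$ produces a unique $y'\in{\cal O}^{\,n}$ with $g(y')=0$, i.e.\ $(x',y')$ is a common zero of $f_1,\ldots,f_n$, and
\[
v(y'-b)\>=\> vJ^*_g(b)\,g(b)-v\det J_g(b)\>>\>v\det J_g(b)\>=\>\gamma\;.
\]
Since $J^*_g(b)\in{\cal O}^{\,n\times n}$, we have $vJ^*_g(b)\,g(b)\geq vg(b)\geq\delta$, so $v(y'-b)\geq\delta-\gamma$, which is precisely the asserted inequality $\min_i v(y_i-y'_i)\geq\min_k v(x_k-x'_k)-v\det J(z)$.

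The main obstacle is keeping track of which valuations dominate; in particular, ensuring that perturbing $x$ by $\delta$ does not disturb $v\det J$. This is exactly why the hypothesis is $v(x_i-x'_i)>2v\det J(z)$ rather than merely $v(x_i-x'_i)>v\det J(z)$: the stronger bound both guarantees $v\det J_g(b)=\gamma$ (so the Newton hypothesis is safe to apply) and simultaneously makes the new "constant term" $g(b)$ small enough for Newton's Lemma to fire. Uniqueness of $y'$ is inherited directly from the uniqueness clause of Theorem~\ref{multhens}.
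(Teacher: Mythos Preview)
Your proof is correct and follows essentially the same approach as the paper: freeze the $X$-variables at $x'$, verify the Newton hypothesis $vg(b)>2v\det J_g(b)$ for the resulting system $g$ at $b=(y_1,\ldots,y_n)$, and invoke Theorem~\ref{multhens}. Your justification that $v\det J_g(b)=v\det J(z)$ via multilinearity is in fact more explicit than the paper's, which simply asserts the equality $v\det J(x,y)=v\det J(x',y)$ in passing.
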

\begin{proof}
We observe that the entries of $J(Z)$ and its adjoint matrix $J^*(Z)$
are polynomials in $X_1,\ldots,X_m,Y_1,\ldots,Y_n$ with coefficients in
${\cal O}$. We set $b= (x'_1,\ldots,x'_m,y_1,\ldots,y_n)$. Then $J^*(b)$
is the adjoint matrix for $J(b)$, and the entries of both matrices lie
in ${\cal O}$. In particular, this implies that $vJ^*(b)f(b) \geq
vf(b)$.

By assumption, $f_i(z)=0$ for $1\leq i\leq m$. Hence, the condition
$v(x_i-x'_i)>2\det vJ(a)$, $1\leq i\leq m$, will imply that
\begin{eqnarray*}
v f_i(b) & = & v\left(f_i(x'_1,\ldots,x'_m,y_1,\ldots,y_n)
-f(x_1\ldots,x_m,y_1,\ldots,y_n)\right)\> \geq \>
\min_{1\leq i\leq m} v(x_i-x'_i)\\
& > & 2v\det J(x_1\ldots,x_m,y_1,\ldots,y_n) \>=\>
2v\det J(x'_1,\ldots,x'_m,y_1,\ldots,y_n) \>=\> 2v\det J(b)
\end{eqnarray*}
for $1\leq i\leq m$. In particular, $\det J(b)\ne 0$. Hence by
Theorem~\ref{multhens},
%
%
there is a unique common
zero $(y'_1,\ldots,y'_n)\in {\cal O}^n$ of the polynomials
$f_i(x'_1,\ldots, x'_m,Y_1,\ldots,Y_n)$, $1\leq i\leq n$, such that
\begin{eqnarray*}
\min_{1\leq i\leq n} v(y_i-y'_i) & \geq & vJ^*(b)f(b)-v\det J(b)
\>=\>vJ^*(b)f(b)-v\det J(z)\\
& \geq & \min_{1\leq i\leq m} vf_i(b) -v\det J(z) \>\geq\>
 \min_{1\leq i\leq m} v(x_i-x'_i)-v\det J(z)
\end{eqnarray*}
This proves our assertion.
\end{proof}

%
%
\subsection{An infinite-dimensional Implicit Function Theorem}
                                            \label{sectidIFT}
From our result in Section~\ref{sectpd} it follows that an infinite
power $Y^I$ of an ultrametric space $Y$ can be equipped with an
ultrametric $u^I$ (analogous to the minimum valuation) if the value set
$uY$ is well ordered. In this case, if $(Y,u)$ is spherically complete,
then so is $(Y^I,u^I)$. So we obtain the following corollary to our Main
Theorem~\ref{MT} and to Proposition~\ref{rangepsd}:
\begin{corollary}                           \label{rangecor}
a) \ Take two ultrametric spaces $(Y,u)$ and $(Y',u')$, and an arbitrary
index set $I$. Assume that $uY$ is well ordered, $f:\;Y^I \rightarrow
Y'$ is immediate and that $(Y,u)$ is spherically complete. Then $f$ is
surjective and $(Y',u')$ is spherically complete.
\sn
b) \ Take two valued abelian groups $(G,v)$ and $(G',v')$, and an
arbitrary index set $I$. Assume that $vG$ is well ordered, $b\in G^I$,
$B$ is a ball around $0$ in $G^I$, $f:\;G^I \rightarrow G'$ has a
pseudo-companion on $b+B$, and that $(G,v)$ is spherically
complete. Then $f$ is surjective and $(G',v')$ is spherically complete.
\end{corollary}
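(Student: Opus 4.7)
The plan is to reduce both parts to results already in hand by first extending the product theorem (Proposition~\ref{prodsphc}) to the infinite-index case under the well-ordering hypothesis, and then invoking Theorem~\ref{MT} (together with Proposition~\ref{rangepsd} for part~b).

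First I would carefully define $u^I : Y^I \times Y^I \to uY \cup \{\infty\}$ by $u^I((y_i)_i,(z_i)_i) := \min_{i \in I} u(y_i,z_i)$. The minimum exists because $\{u(y_i,z_i) \mid i \in I\}$ is a subset of the well-ordered set $uY \cup \{\infty\}$. The axioms (U1)--(U3) then go through by a straightforward coordinate-wise check, exactly as in the finite-index case already discussed in Section~\ref{sectpd}.

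Next I would verify that the lemma preceding Proposition~\ref{prodsphc} extends verbatim: for any ball $B \subseteq Y^I$ and $k \in I$, $\pi_k B$ is a ball in $(Y_k,u_k)$ and $B = \prod_{i \in I} \pi_i B$. The one place that uses finiteness of the index set in the original argument is the step where one picks an index $j$ realising $\min_{i} u_i(y_i,\pi_i z^i)$; under our well-ordering assumption this minimum again exists, so the same proof works. Consequently, Proposition~\ref{prodsphc} itself extends: spherical completeness of each $(Y_i,u_i)$ implies that of $(\prod_{i \in I} Y_i, u)$, since a nest of balls in the product projects to a nest in each coordinate, the coordinate nests have common points $x_i$, and by $B = \prod_i \pi_i B$ the tuple $(x_i)_{i \in I}$ lies in every ball of the original nest.

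With this infinite product version in place, part (a) is immediate: $(Y^I,u^I)$ is spherically complete, $f$ is immediate by hypothesis, so Theorem~\ref{MT} directly gives that $f$ is surjective and $(Y',u')$ is spherically complete. For part (b), I equip $G^I$ with the analogous minimum valuation, so that it becomes a spherically complete valued abelian group with well-ordered value set. The ball $b+B$ inherits spherical completeness. Since $f$ admits a pseudo-companion $\phi$ on $b+B$, Proposition~\ref{rangepsd} yields an immediate map $f : b+B \to fb+B'$, where $B'$ is the range ball of $\phi$. Theorem~\ref{MT} applied to this restricted map then produces surjectivity onto $fb+B'$ and spherical completeness of $fb+B'$; specialising to $B = G^I$, $B' = G'$ recovers the stated conclusion for $f$ and for $(G',v')$.

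The only real obstacle is the extension of Proposition~\ref{prodsphc} to infinite $I$, and it is entirely a bookkeeping issue: the single place where finiteness of $I$ was tacitly used is the attainment of a minimum over $\{u_i(y_i,x_i) \mid i \in I\}$, and the well-ordering hypothesis on $\bigcup_i u_i Y_i$ was introduced precisely to guarantee this. No new ideas are required beyond those already in Sections~\ref{sectpuv} and~\ref{sectpd}.
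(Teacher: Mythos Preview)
Your proposal is correct and follows exactly the route the paper intends: equip $Y^I$ (resp.\ $G^I$) with the minimum ultrametric, observe it is spherically complete by the product result, and then invoke Theorem~\ref{MT} (together with Proposition~\ref{rangepsd} for part~b)). One remark: you speak of ``extending'' the lemma and Proposition~\ref{prodsphc} to infinite $I$, but Section~\ref{sectpd} is already written for arbitrary $I$ under the hypothesis that $\bigcup_i u_iY_i$ is well ordered (see the opening sentence there), so no additional work is needed---the infinite-index product and its spherical completeness are already on record.
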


In the case of a valued field $(K,v)$ we cannot do the same since if the
valuation is non-trivial, the value group will not be well ordered.
If the valuation is not discrete (i.e., its value group is not
isomorphic to $\Z$), then not even the value set $v{\cal O}:=v({\cal
O}\setminus \{0\})$ of the valuation ring is well ordered. But we may be
interested in infinite systems of polynomials with coefficients in a
subring $R$ of ${\cal O}$ with well ordered value set $vR:=v(R\setminus
\{0\})$. We set ${\cal M}_R:= \{a\in R\mid va>0\}$.

Note that $(R,v)$ is not necessarily spherically complete, even if
$(K,v)$ is. So we will assume that $(R,v)$ is spherically complete.

We generalize the definitions of \bfind{minimum valuation} and of
\bfind{pseudo linear map} in the obvious way. If $a=(a_i)_{i\in I}\in
R^I$, then $va:=\min_{i\in I} va_i\,$. If $Y\subseteq R^I$, $0\ne s\in
R$ and $f$ a map from $Y$ into $R^I$, then $f$ is pseudo-linear with
pseudo-slope $s$ if (\ref{plm}) holds for all $y,z\in Y$ such that $y\ne
z$. We then have the following application of Proposition~\ref{rangepsd}
together with Proposition~\ref{prodsphc}:

\begin{proposition}                               \label{rangeR}
Take $b\in R^I$ and $B$ a ball in $(R^I,v)$ around $0$. Assume that $f:
b+B\rightarrow R^I$ is pseudo-linear with pseudo-slope $s\in R$ and that
$(R,v)$ is spherically complete. Then $f$ is an isomorphism of
ultrametric spaces from $b+B$ onto $fb+sB$.
\end{proposition}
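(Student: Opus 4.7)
The plan is to mirror the proof of Proposition~\ref{range}, replacing the role of $(K^n,v)$ by $(R^I,v)$ and invoking Proposition~\ref{prodsphc} to secure spherical completeness of the product.

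First, I would observe that multiplication by $s$ is the candidate pseudo-companion. Since $vR$ is well ordered, the minimum valuation on $R^I$ is well defined, and the map $x\mapsto sx$ is an isomorphism of the valued group $(R^I,v)$ onto itself (after restriction, it sends the ball $B$ around $0$ onto the ball $sB$ around $0$) with value map $\varphi:\alpha\mapsto\alpha+vs$. The defining inequality $v(fy-fz-s(y-z))>v(fy-fz)=vs(y-z)$ of pseudo-linearity is exactly condition (PC2) of Proposition~\ref{rangepsd} applied to $\phi(x):=sx$; condition (PC1) holds because every group isomorphism of valued abelian groups is immediate (the defining inequality (IH1) is trivially satisfied with $a=a'/s$, and (IH2) holds because $\phi$ is value-preserving up to the shift by $vs$).

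Next, Proposition~\ref{rangepsd} applied with $\phi(x)=sx$ yields that $f(b+B)\subseteq fb+sB$ and that $f:b+B\to fb+sB$ is an immediate embedding of ultrametric spaces. Since $\phi$ is injective with value map $\varphi$, the addendum in Proposition~\ref{rangepsd} gives that $f$ is itself an embedding of ultrametric spaces with value map $\varphi:\alpha\mapsto\alpha+vs$.

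It remains to upgrade this embedding to an isomorphism, i.e., to show that $f$ is surjective onto $fb+sB$. Here I would invoke Proposition~\ref{prodsphc}: because $(R,v)$ is spherically complete and $vR$ is well ordered, the product $(R^I,v)$ is spherically complete. Hence $b+B$, being a ball in $R^I$, is spherically complete as well. Theorem~\ref{MT} (or directly Corollary~\ref{rangecor}(b)) then forces the immediate map $f:b+B\to fb+sB$ to be surjective, completing the proof that $f$ is an isomorphism of ultrametric spaces from $b+B$ onto $fb+sB$.

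The only nontrivial step is the verification that $(R^I,v)$ is spherically complete, but this is exactly what Proposition~\ref{prodsphc} was set up to deliver under the well-orderedness hypothesis on $vR$; beyond that, everything is a direct transcription of the finite-dimensional argument.
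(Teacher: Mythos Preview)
Your proposal is correct and follows exactly the approach the paper intends: it explicitly states that Proposition~\ref{rangeR} is ``an application of Proposition~\ref{rangepsd} together with Proposition~\ref{prodsphc},'' and you have spelled out precisely that application. The only cosmetic point is that writing $a=a'/s$ presumes division in $R$; more accurately, surjectivity of $\phi:B\to sB$ gives the required preimage, but since $R$ satisfies (V3) it is an integral domain and your shorthand is harmless.
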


If the map is given by an infinite system of polynomials $f=
(f_k)_{k\in I}$ in infinitely many variables $X_i\,$, $i\in I$, and with
coefficients in $R$, then we may consider the infinite matrix $J_f(b)\in
R^{I\times I}$. Note that this matrix has only finitely many non-zero
entries in every row. We denote by $R^{(I\times I)}$ all matrices in
$R^{I\times I}$ which have only finitely many non-zero entries in every
row and every column. If every variable appears only in finitely many
$f_k$, then $J_f(b)\in R^{(I\times I)}$.

If we assume that $R$ is spherically complete, we can consider a larger
class of matrices. We denote by $R^{((I\times I))}$ all matrices in
$R^{I\times I}$ which for each $\alpha\in vR$ have only finitely many
entries of value $\leq\alpha$ in every row and every column. For every
two matrices in $R^{((I\times I))}$, their product can be computed and
lies again in $R^{((I\times I))}$. It is possible that $J_f(b)\in
R^{((I\times I))}$ even when there are variables that appear in
infinitely many $f_k$.

We define ${\cal M}_R^{(I\times I)}$ and ${\cal M}_R^{((I\times I))}$
analogously and note that $R^{(I\times I)}$, $R^{((I\times I))}$,
${\cal M}_R^{(I\times I)}$ and ${\cal M}_R^{((I\times I))}$ are all
closed under matrix addition and multiplication and under scalar
multiplication. Further, $R^{(I\times I)}{\cal M}_R^{(I\times I)}
\subseteq {\cal M}_R^{(I\times I)}$, ${\cal M}_R^{(I\times I)}
R^{(I\times I)}\subseteq {\cal M}_R^{(I\times I)}$, $R^{((I\times I))}
{\cal M}_R^{((I\times I))}\subseteq {\cal M}_R^{((I\times I))}$ and
${\cal M}_R^{((I\times I))}R^{((I\times I))}\subseteq {\cal
M}_R^{((I\times I))}$.

\parm
We are not able to use determinants here. Still, we can use our original
approach if $J_f(b)$ has an inverse. But we can even work with less than
invertibility. Given matrices $M,M^\circ$ in $R^{(I\times I)}$, or in
$R^{((I\times I))}$ if $R$ is spherically complete, we will say that
$M^\circ$ is a \bfind{pseudo-inverse of $M$} if the matrices
$MM^\circ-E$ and $M^\circ M-E$ are in ${\cal M}_R^{I\times I}$,
where $E$ denotes the $I\times I$-identity matrix.

Actually, we also do not need that the ring $R$ is a subring of a valued
field. It suffices to assume that it is a valued abelian group with its
multiplication satisfying (V3), and that its value set is a well ordered
subset of an ordered abelian group. It then follows that the value set
does not contain negative elements. In particular, all entries of $M\in
R^{I\times I}$ have value $\geq 0$. This implies that $vMa\geq va$ for
all $a\in R^I$. Since $vR$ is well ordered, it contains a minimal
positive value $\alpha_0$. If $M$ is in ${\cal M}_R^{(I\times I)}$ or in
${\cal M}_R^{((I\times I))}$, then all entries of $M$ have value $\geq
\alpha_0$. It then follows that $vMa\geq va+\alpha_0>va$ for all $a\in
R^I$.

\begin{lemma}                               \label{pi}
Take $M,M^\circ$ in $R^{(I\times I)}$, or in $R^{((I\times I))}$ if $R$
is spherically complete. Assume that $M^\circ$ is a pseudo-inverse of
$M$. Then the following holds:
\n
1) \ For all $a\in R^I$, $vMa=va$ and $vM^\circ a=va$; in particular,
$M,M^\circ\notin {\cal M}_R^{I\times I}$ and the value set $vR$ must
contain $0$.
\n
2) \ If $M'$ is in $R^{(I\times I)}$, or in $R^{((I\times I))}$
respectively, such that $M'-M\in {\cal M}_R^{I\times I}$, then
$M^\circ$ is also a pseudo-inverse of $M'$.
\n
3) \ Both $M$ and $M^\circ$ induce immediate embeddings of the
ultrametric space $R^{I}$ in itself with value map id, and the same
holds on every ball around 0 in $R^I$.
\end{lemma}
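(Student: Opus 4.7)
The three parts build on each other, with Part~1 being the technical crux. First I would note that because $vR$ has no negative elements, all entries of both $M$ and $M^\circ$ have value $\geq 0$, hence $vMa \geq va$ and $vM^\circ a \geq va$ for every $a \in R^I$. The key to strengthening these to equalities is the identity $a = M^\circ M a - (M^\circ M - E)a$: the pseudo-inverse hypothesis $M^\circ M - E \in {\cal M}_R^{I\times I}$, combined with the existence of a minimal positive value $\alpha_0 \in vR$, gives $v(M^\circ M - E)a \geq va + \alpha_0 > va$ for $a \ne 0$. The ultrametric triangle law then forces $vM^\circ M a = va$, and sandwiching between $vM^\circ M a \geq vMa \geq va$ gives $vMa = va$; the symmetric argument using $MM^\circ - E$ gives $vM^\circ a = va$. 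The remaining consequences follow cheaply: $M \in {\cal M}_R^{I\times I}$ would yield $vMa > va$ for any $a \ne 0$, contradicting $vMa = va$, so $M \notin {\cal M}_R^{I\times I}$, and then some entry of $M$ lies in $R \setminus {\cal M}_R$, which (given $vR \geq 0$) must have value $0$, so $0 \in vR$.

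For Part~2, I would decompose $M^\circ M' - E = (M^\circ M - E) + M^\circ(M' - M)$. Since $M' - M$ belongs to $R^{(I\times I)}$ or $R^{((I\times I))}$ (closed under addition) and to ${\cal M}_R^{I\times I}$, it lies in the refined class ${\cal M}_R^{(I\times I)}$ or ${\cal M}_R^{((I\times I))}$. The closure laws stated just before the lemma then give $M^\circ(M' - M) \in {\cal M}_R^{(I\times I)} \subseteq {\cal M}_R^{I\times I}$ (respectively the double-parenthesis version), and adding the known $M^\circ M - E \in {\cal M}_R^{I\times I}$ yields $M^\circ M' - E \in {\cal M}_R^{I\times I}$. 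The symmetric decomposition $M'M^\circ - E = (MM^\circ - E) + (M' - M)M^\circ$ handles the other side identically.

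For Part~3, the identity $vMa = va$ from Part~1 makes $M$ (which is an additive group homomorphism on $R^I$) injective and an embedding of ultrametric spaces with value map $\mbox{\rm id}$. To verify immediacy I would invoke Proposition~\ref{ATC}: condition (IH2) is automatic from value-preservation. For (IH1), given $a' \ne 0$ I would take $a := M^\circ a'$; then $Ma = a' + (MM^\circ - E)a'$, so $v(a' - Ma) = v(MM^\circ - E)a' > va' = vM^\circ a' = va$ by the same $\alpha_0$-argument. The restriction to any ball $B$ around $0$ works identically, since such balls are subgroups and $M^\circ a'$ lies in $B$ whenever $a'$ does (as $M^\circ$ preserves values). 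The same reasoning applies with the roles of $M$ and $M^\circ$ exchanged.

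The main (minor) obstacle is keeping the layered definitions straight in Part~2, distinguishing ${\cal M}_R^{I\times I}$ (just an entry-wise condition) from ${\cal M}_R^{(I\times I)}$ and ${\cal M}_R^{((I\times I))}$ (entry condition plus the finiteness or well-orderedness constraint needed for matrix multiplication to make sense). Once one observes that $M' - M$ automatically inherits the structural condition from $M$ and $M'$, this caveat is benign. The other point to remember throughout is that every strict inequality $v(Na) > va$ for $N \in {\cal M}_R^{I\times I}$ depends on the well-ordering of $vR$ to supply the jump $\alpha_0$.
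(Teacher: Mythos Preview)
Your proposal is correct and follows essentially the same route as the paper's proof: in Part~1 you sandwich $vM^\circ Ma \geq vMa \geq va$ after showing $v(M^\circ M - E)a > va$ (the paper does the symmetric version first, deriving $vM^\circ a = va$ from $MM^\circ - E$ and then interchanging), in Part~2 you use the same decomposition $M^\circ M' - E = (M^\circ M - E) + M^\circ(M'-M)$, and in Part~3 you invoke Proposition~\ref{ATC} with $a = M^\circ a'$ exactly as the paper does. Your explicit handling of the ``in particular'' consequences in Part~1 and of the matrix-class issue for $M'-M$ in Part~2 are details the paper leaves implicit, so if anything your write-up is slightly more thorough.
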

\begin{proof}
1): \ For all $a\in R^I$ we have that $v(MM^\circ a\,-\,a)=v((MM^\circ
-E)a>va$ and hence $va=vMM^\circ a\geq vM^\circ a\geq va$. It
follows that equality holds everywhere, which gives $vM^\circ a=va$.
Interchanging $M$ and $M^\circ$, we obtain $vMa=va$.
\pars
2): \ We compute: $M'M^\circ-E=(M'-M)M^\circ+MM^\circ-E\in
{\cal M}_R^{I\times I}$, and similarly for $M^\circ M'-E$.
\pars
3): \ It suffices to show that for every ball $B$ around 0 in $R^I$, $M$
induces an immediate embedding of $B$ into itself with value map id.
Since $vMa=va$ for all $a\in R^I$, we have $MB\subseteq B$ and that $M$
induces an injective map on $B$ with value map id. As $M$ induces a
group homomorphism, we only have to show now that for every $a'\in B
\setminus \{0\}$ there is $a\in B$ such that (IH1) and (IH2) of
Proposition~\ref{ATC} hold for $M$ in the place of $f$. As $vM^\circ
a'=va'$, we have that $a:=M^\circ a'\in B$. Further, $v(a'-Ma)=
v(a'-MM^\circ a')=v(E-MM^\circ)a'>va'$. Finally, if $b\in B$ with
$va\leq vb$, then $vMa=va\leq vb=vMb$.
\end{proof}

\begin{proposition}                         \label{infpc}
Assume that $(R,v)$ is spherically complete. Take any index set $I$ and
a system of polynomials $f= (f_k)_{k\in I}$ in variables $Y_i\,$, $i\in
I$, with coefficients in $R$. Take $b\in R^I$ and suppose that $J_f(b)$
lies in $R^{((I\times I))}$ and admits a pseudo-inverse in $R^{((I\times
I))}$. Then $J_f(b)$ is a pseudo-companion of $f$ on $b+ {\cal M}_R^I$,
and $f$ is an isomorphism from $b+{\cal M} _R^I$ onto $f(b)+{\cal
M}_R^I$ with value map id. The system $f$ has a zero on $b+{\cal M}_R^I$
(which then is unique) if and only if $vf(b)>0$.
\end{proposition}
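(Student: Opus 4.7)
The plan is three-fold: verify that $\phi := J_f(b)$ is a pseudo-companion of $f$ on $b+{\cal M}_R^I$, invoke Proposition~\ref{rangepsd} to obtain an immediate embedding with value map the identity, and then use Proposition~\ref{prodsphc} together with Theorem~\ref{MT} to promote this embedding to an isomorphism. The statement on zeros will then fall out by a one-line computation.

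For (PC1), by hypothesis $\phi\in R^{((I\times I))}$ admits a pseudo-inverse in the same class. Lemma~\ref{pi}~3), applied to the ball $B={\cal M}_R^I$ around $0$ in $R^I$, immediately gives that $\phi$ induces an immediate embedding of ${\cal M}_R^I$ into itself with value map the identity; in particular $v\phi(x)=vx$ for every $x\in R^I$.

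For (PC2), I would write $y=b+\varepsilon_1$ and $z=b+\varepsilon_2$ with $\varepsilon_1,\varepsilon_2\in {\cal M}_R^I$ and use the multivariate Taylor expansion of each $f_k$, which is valid coordinatewise since each $f_k$ is a polynomial in only finitely many variables. This yields
\[f(y)-f(z)\>=\>\phi(y-z)\,+\,S(b,\varepsilon_1,\varepsilon_2)\,,\]
where the remainder $S$ has entries that are sums of terms, each containing a factor of the form $\varepsilon_{1,i_1}\cdots\varepsilon_{1,i_d}-\varepsilon_{2,i_1}\cdots\varepsilon_{2,i_d}$ with $d\geq 2$. Exactly as in the proof of Proposition~\ref{findpl}~a), such a difference factors as $(\varepsilon_{1,j}-\varepsilon_{2,j})$ times a sum of products of the remaining $\varepsilon$'s, each of which has value at least the minimal positive element $\alpha_0\in vR$. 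Hence $vS(b,\varepsilon_1,\varepsilon_2)>v(\varepsilon_1-\varepsilon_2)=v(y-z)=v\phi(y-z)$, and by the ultrametric triangle law $v(fy-fz)=v\phi(y-z)$. This is precisely (PC2). This step is the main technical obstacle, since the estimate must be tracked uniformly in $k\in I$ and crucially uses that $vR$ is a well ordered set of nonnegative values, so has a smallest positive element.

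Having established that $\phi$ is a pseudo-companion, Proposition~\ref{rangepsd} produces an immediate embedding of ultrametric spaces $f:b+{\cal M}_R^I\to f(b)+{\cal M}_R^I$ with value map the identity. By Proposition~\ref{prodsphc}, the spherical completeness of $(R,v)$ together with the well-orderedness of $vR$ forces $(R^I,v)$, and hence the ball $b+{\cal M}_R^I$, to be spherically complete. Theorem~\ref{MT} then upgrades the embedding to a surjection, so $f$ is an isomorphism onto $f(b)+{\cal M}_R^I$. For the final clause, any zero $a\in b+{\cal M}_R^I$ of $f$ satisfies $vf(b)=v(f(b)-f(a))=v(b-a)>0$ because $b-a\in{\cal M}_R^I$; conversely, if $vf(b)>0$ then $0\in f(b)+{\cal M}_R^I$, and the bijection just established furnishes a unique preimage.
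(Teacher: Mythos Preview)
Your argument is correct and follows essentially the same route as the paper: verify (PC1) via Lemma~\ref{pi}, verify (PC2) via the infinite-dimensional Taylor expansion, then apply Proposition~\ref{rangepsd} and Theorem~\ref{MT}. You supply more detail than the paper does---the explicit factorization of the higher-order remainder, the appeal to Proposition~\ref{prodsphc} for spherical completeness of $R^I$, and the two-line argument for the final clause on zeros---but the structure and the key ingredients are identical.
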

\begin{proof}
Since $J=J_f(b)$ has a pseudo-inverse, we know from the previous lemma
that $J$ induces an immediate embedding of ${\cal M}_R^I$ in itself
with value map id.

Take $\varepsilon_1, \varepsilon_2\in {\cal M}_R^I$. An
infinite-dimensional version of the multidimensional Taylor expansion
gives the infinite-dimensional analogue of (\ref{Taymult}) and
(\ref{Taymultv}), with $s=1$. We obtain that for $y=b+\varepsilon_1$
and $z= b+\varepsilon_2$ in $b+{\cal M}_R^I$ with $y\ne z$,
\[
v(f(y)-f(z)\>-\>J(y-z))\> >\>v(y-z)\>=\>vJ(y-z)\;.
\]
This proves that $J$ is a pseudo-companion of $f$ on $b+{\cal M}_R^I$. From
Proposition~\ref{rangepsd} we infer that $f$ induces an embedding of
$b+{\cal M}_R^I$ in $f(b)+J{\cal M}_R^I\subseteq f(b)+{\cal M}_R^I$
with value map $\varphi=\mbox{\rm id}$.

The remaining assertions now follow from Proposition~\ref{rangepsd} and
Theorem~\ref{MT}.
\end{proof}

Now we can prove an
\bfind{infinite-dimensional Implicit Function Theorem}:
\begin{theorem}
Take any index sets $I$ and $I'$ and a system of polynomials $f=
(f_k)_{k\in I}$ in variables $X_j\,$, $j\in I'$, and $Y_i\,$, $i\in I$,
with coefficients in $R$, and such that each variable $Y_i$ appears in
only finitely many $f_k$. Assume that $(R,v)$ is spherically complete.
Set $Z=(X_j,Y_i\mid j\in I',i\in I)$ and
\[J(Z):=\left(\frac{\partial f_k}{\partial Y_i}(Z)\right)_
{k,i\in I}\;\;.\]
Assume that the polynomials $f_k$, $k\in I$, admit a common zero $z=
(x_j,y_i\mid j\in I',i\in I)$ in $R^{I'\cup I}$ such that $J(z)$ admits
a pseudo-inverse in $R^{((I\times I))}$. Then for all $(x'_j)_{j\in I'}
\in R^{I'}$ with $v(x_j-x'_j)>0$ there exists a unique $(y'_i)_{i\in I}
\in R^I$ such that $z'= (x'_j,y'_i\mid j\in I',i\in I)$ is a common zero
of the polynomials $f_k\,$, $k\in I$,
and
\[\min_{i\in I} v(y_i-y'_i)\geq \min_{j\in I'} v(x_j-x'_j)\;.\]
\end{theorem}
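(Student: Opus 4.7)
The plan is to reduce to Proposition~\ref{infpc} applied only in the $Y$-variables. Fix $x'=(x'_j)_{j\in I'}$ satisfying the hypothesis, and let $g_k(Y):= f_k(x',Y)\in R[Y_i\mid i\in I]$ be the polynomial obtained by substitution. I will seek a unique zero of the system $g=(g_k)_{k\in I}$ in $y+{\cal M}_R^I$, where $y:=(y_i)_{i\in I}$; setting $z':=(x'_j,y'_i)$ then produces the required common zero of the $f_k$.

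To apply Proposition~\ref{infpc} to $g$ at base point $y$, I need to verify two structural facts about $J_g(y)$. First, $J_g(y)\in R^{((I\times I))}$: each row has only finitely many non-zero entries because the polynomial $f_k$ involves only finitely many $Y_i$, and each column has only finitely many non-zero entries by the hypothesis that each $Y_i$ appears in only finitely many $f_k$; hence $J_g(y)\in R^{(I\times I)}\subseteq R^{((I\times I))}$. Second, $J_g(y)$ admits a pseudo-inverse: by Lemma~\ref{pi}(2), since $J(z)$ has a pseudo-inverse in $R^{((I\times I))}$ by assumption, it suffices to show that $J_g(y)-J(z)\in {\cal M}_R^{I\times I}$. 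The $(k,i)$-entry of this difference is $(\partial f_k/\partial Y_i)(x',y)-(\partial f_k/\partial Y_i)(x,y)$, and since $\partial f_k/\partial Y_i$ is a polynomial over $R$ in finitely many $X_j$-variables, a Taylor expansion in those variables, combined with $v(x_j-x'_j)>0$, places this difference in ${\cal M}_R$.

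The remaining hypothesis is $vg(y)>0$. Using $f(z)=0$, the same kind of Taylor expansion applied to $f_k$ itself gives $vg_k(y)=v(f_k(x',y)-f_k(x,y))\geq\min_{j\in I'}v(x_j-x'_j)>0$, hence $vg(y)\geq\min_j v(x_j-x'_j)$. Proposition~\ref{infpc} now yields a unique $y'\in y+{\cal M}_R^I$ with $g(y')=0$, and in addition identifies $g$ as an isomorphism of ultrametric spaces from $y+{\cal M}_R^I$ onto $g(y)+{\cal M}_R^I$ with value map the identity. Consequently,
\[\min_{i\in I}v(y_i-y'_i)\>=\>v(y-y')\>=\>v(g(y')-g(y))\>=\>vg(y)\>\geq\>\min_{j\in I'}v(x_j-x'_j),\]
which is the asserted bound.

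The main work is concentrated in the two Taylor-type verifications: the entrywise check that $J_g(y)-J(z)\in{\cal M}_R^{I\times I}$, and the estimate $vg(y)>0$. Both are tractable precisely because each individual polynomial $f_k$ (and each of its partial derivatives) involves only finitely many $X_j$, so every relevant expression is a finite sum of terms each carrying a factor $x_j-x'_j$ of positive value. No serious obstacle is anticipated beyond this bookkeeping, after which Lemma~\ref{pi}(2) and Proposition~\ref{infpc} dispatch the theorem.
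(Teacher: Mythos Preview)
Your proposal is correct and follows essentially the same route as the paper's proof: specialize the $X$-variables to obtain $g$, use a Taylor estimate in the $X$-variables together with Lemma~\ref{pi}(2) to transfer the pseudo-inverse from $J(z)$ to $J_g(y)=J(\tilde z)$, verify $vg(y)>0$ via the same Taylor estimate and $f(z)=0$, and then invoke Proposition~\ref{infpc} to get the unique zero and the value-map-id computation for the final inequality. Your treatment is in fact slightly more explicit than the paper's in justifying $J_g(y)\in R^{(I\times I)}$ by separately checking rows and columns.
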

\begin{proof}
We set $\tilde{z}:=(x'_j,y_i\mid j\in I',i\in I)$ and observe that our
condition that $v(x_j-x'_j)>0$ implies that $v\left(\frac{\partial f_k}
{\partial Y_i}(\tilde{z})-\frac{\partial f_k} {\partial Y_i}(z)\right)
>0$. From part 2) of Lemma~\ref{pi} it thus follows that the
pseudo-inverse of $J(z)$ is also a pseudo inverse of $J(\tilde{z})$.
(Note that $J(z),J(\tilde{z})\in R^{(I\times I)}$ by our condition on
the variables $Y_i$.)

For each $k\in I$ we set $g_k(Y_i\mid j\in I):=f_k(x'_j,Y_i\mid j\in I',
i\in I)$. Further, we set $b:=(y_i\mid i\in I)$. We consider the system
$g=(g_k)_{k\in I}$. From Proposition~\ref{infpc} we infer that
$J_g(b)=J(\tilde{z})$ is a pseudo-companion of $g$ on $b+ {\cal M}_R^I$.
By assumption, $f_k(z)=0$ for $k\in I$. Hence, the condition
$v(x_j-x'_j)>0$ will imply that
\[vg_k(b)\>=\>v f_k(\tilde{z}) \>=\> v(f_k(\tilde{z})
-f_k(z))\>\geq\>\min_{j\in I'} v(x_j-x'_j)>0\;.\]
Hence $vg(b)>0$ and by Proposition~\ref{infpc} the system $g$ has a
unique zero $a=(y'_i\mid i\in I)$ on $b+ {\cal M}_R^I$. It satisfies
\[\min_{i\in I} v(y_i-y'_i)\>=\>v(b-a)\>=\>v(g(b)-g(a))\>=\>vg(b)
\>\geq\>\min_{j\in I'} v(x_j-x'_j)\;.\]
\end{proof}

\begin{remark}
In our theorem we needed the assumption on the variables $Y_i$ in order
to have only finitely many non-zero polynomials in each row and each
column of $J(Z)$. Without this it is not automatic that the conditions
$J(z)\in R^{((I\times I))}$ and $v(x_j-x'_j)>0$ imply that $J(\tilde{z})
\in R^{((I\times I))}$. We can drop the condition on the variables if we
assume instead that $J(\tilde{z}) \in R^{((I\times I))}$ and that it has
a pseudo-inverse in $R^{((I\times I))}$.
\end{remark}

%
%
\subsection{Power series maps on valuation ideals}  \label{sectpsf}
Take any field $k$ and any ordered abelian group $G$. We endow $k((G))$
with the canonical valuation $v$ and denote the valuation ideal by
${\cal M}$. Every power series
\begin{equation}                            \label{ps}
f(X)=\sum_{i\in\N}^{}c_iX^i\in k[[X]]
\end{equation}
defines in a canonical way a map $f:{\cal M}\rightarrow {\cal M}$
(note: $0\notin\N$ in our notation).
This can be shown by use of Neumann's Lemma, cf.\ [DMM1]. We note that
for every integer $r>1$ and every $y,z\in {\cal M}$,
\begin{equation}                            \label{va-br}
v(y^r-z^r) \>>\> v(y-z)\;.
\end{equation}
Therefore, if $c_1\ne 0$, we have that
\begin{equation}                            \label{ps-}
v(f(y)- f(z)-c_1(y-z))\>=\>v\sum_{i\geq 2}c_i(y^i-z^i)\>>\>v(y-z)
\>=\>vc_1(y-z)
\end{equation}
because $vc_i=0$ for all $i$. So we see that $f$ is pseudo-linear
with slope $c_1$ if $c_1\ne 0$. By Proposition~\ref{range}, we obtain:
\begin{theorem}
If $f:{\cal M}\rightarrow {\cal M}$ is defined by the power series
(\ref{ps}), then $f$ is an isomorphism of ultrametric spaces.
\end{theorem}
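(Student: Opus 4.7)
The plan is to deduce the theorem as an immediate corollary of Proposition~\ref{range}, with parameters $b=0$, $B={\cal M}$, $s=c_1$ (assuming, as the setup of equation (\ref{ps-}) presupposes, that $c_1\neq 0$).

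First I would verify the hypotheses. The field $k((G))$ is spherically complete because every power series field is spherically complete, as noted in Section~\ref{sectvf}. The valuation ideal ${\cal M}=\{y\in k((G))\mid vy>0\}=\bigcup_{\alpha>0}B_\alpha(0)$ is a union of closed balls around $0$, hence a ball around $0$ in the sense of the paper. The pseudo-linearity of $f$ with pseudo-slope $c_1$ on ${\cal M}$ has already been established in (\ref{ps-}), using Neumann's Lemma to ensure $f$ is well defined on ${\cal M}$ and the elementary estimate (\ref{va-br}) to control the contribution of the terms of degree $\geq 2$.

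Next, I would observe that, because $0\notin\N$ in the paper's convention, the series (\ref{ps}) has no constant term, so $f(0)=0$. Moreover $c_1\in k\setminus\{0\}$ gives $vc_1=0$, whence $c_1{\cal M}={\cal M}$. Consequently the target ball $f(0)+c_1{\cal M}$ in Proposition~\ref{range} equals ${\cal M}$ itself, and the associated value map $\varphi:\alpha\mapsto\alpha+vc_1=\alpha$ is the identity on $vk((G))$.

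Applying Proposition~\ref{range} then yields directly that $f$ is a pseudo-linear isomorphism of ultrametric spaces from ${\cal M}$ onto ${\cal M}$, with value map the identity; this is exactly the desired conclusion. There is essentially no obstacle in the argument beyond this bookkeeping, since all the nontrivial work (the pseudo-linearity estimate, the surjectivity coming from spherical completeness via Theorems~\ref{MTattr} and~\ref{MT}) has been packaged into Proposition~\ref{range}. The one hidden point worth flagging is the standing assumption $c_1\neq 0$: if $c_1=0$ then $f$ strictly increases valuations of nonzero differences and cannot hit elements of small positive valuation, so the statement as given genuinely requires the leading coefficient to be nonzero.
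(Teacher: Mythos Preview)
Your proposal is correct and follows exactly the paper's approach: the paper establishes pseudo-linearity via (\ref{ps-}) and then simply writes ``By Proposition~\ref{range}, we obtain'' the theorem. Your version just makes the bookkeeping ($f(0)=0$, $vc_1=0$ so $c_1{\cal M}={\cal M}$, spherical completeness of $k((G))$) explicit, and your remark about the tacit hypothesis $c_1\neq 0$ is well taken.
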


A similar result holds for power series with generalized exponents
(which for instance are discussed in [DS]). Take any subgroup $G$ of
$\R$ and a generalized power series of the form
\begin{equation}                            \label{gps}
f(X)=\sum_{i\in\N}^{}c_iX^{r_i}\in k[[X^G]]
\end{equation}
where $r_i\,$, $i\in\N$, is an increasing sequence of positive real
numbers in $G$. Suppose that the power functions $y\mapsto y^{r_i}$ are
defined on ${\cal M}$ for all $i$. Then again, the generalized power
series (\ref{gps}) defines a map $f:{\cal M}\rightarrow {\cal M}$. We
note that (\ref{va-br}) also holds for every real number $r>1$ for which
$y \mapsto y^r$ is defined on ${\cal M}$. Hence if $c_1\ne 0$ and
$r_1=1$, then (\ref{ps-}) holds, with the exponent $i$ replaced by
$r_i\,$.
%
%
This shows again that $f$ is pseudo-linear with pseudo-slope $c_1\,$.
If, however, $r_1\ne 1$, we may think of writing
$f(y)=\tilde{f}(y^{r_1})$ with
\[\tilde{f}(X)\>=\>\sum_{i\in\N}^{}c_iX^{r_i/r_1}\;.\]
If the power functions $y\mapsto y^{r_i/r_1}$ are defined on ${\cal M}$
for all $i$, then $\tilde{f}$ defines a pseudo-linear map from
${\cal M}$ to ${\cal M}$ with pseudo-slope $c_1\,$. So we obtain:
\begin{theorem}
Suppose that the power functions $y\mapsto y^{r_i}$ and $y\mapsto
y^{r_i/r_1}$ are defined on ${\cal M}$ for all $i$, and that $y\mapsto
y^{r_1}$ is surjective. If $f:{\cal M}\rightarrow {\cal M}$ is defined
by the power series (\ref{gps}) with $c_1\ne 0$, then $f$ is surjective.
\end{theorem}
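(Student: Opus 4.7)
The plan is to carry out in detail the reduction sketched in the paragraph preceding the theorem. Set $\tilde f(X):=\sum_{i\in\N}c_iX^{r_i/r_1}$; by hypothesis the power maps $y\mapsto y^{r_i/r_1}$ are defined on ${\cal M}$ for all $i$, so $\tilde f$ yields a well-defined map ${\cal M}\to k((G))$ with $\tilde f(0)=0$ (recall $0\notin\N$ in the indexing, so there is no constant term). By construction $f(y)=\tilde f(y^{r_1})=\tilde f(\sigma(y))$, where $\sigma(y):=y^{r_1}$. Writing $f$ as this composition, and using that $\sigma$ is assumed surjective on ${\cal M}$, surjectivity of $f$ reduces to surjectivity of $\tilde f:{\cal M}\to{\cal M}$.

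The heart of the argument is that $\tilde f$ is pseudo-linear on ${\cal M}$ with pseudo-slope $c_1$. Since $r_1/r_1=1$ and $r_i/r_1>1$ for $i\geq 2$, the real-exponent version of (\ref{va-br}) cited just before the theorem gives, for $y\ne z$ in ${\cal M}$,
\[
v\bigl(\tilde f(y)-\tilde f(z)-c_1(y-z)\bigr)\;=\;v\sum_{i\geq 2}c_i(y^{r_i/r_1}-z^{r_i/r_1})\;>\;v(y-z)\;=\;vc_1(y-z),
\]
the last equality using $vc_i=0$ for all $i$. So condition (\ref{plm}) is verified, and $\tilde f$ is pseudo-linear on ${\cal M}$ with pseudo-slope $c_1$.

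Next I would apply Proposition~\ref{range} with $b=0$, $B={\cal M}$, and $s=c_1$: the power series field $k((G))$ is spherically complete in its canonical valuation, so $\tilde f$ is an isomorphism of ultrametric spaces from ${\cal M}$ onto $\tilde f(0)+c_1{\cal M}={\cal M}$ (using $\tilde f(0)=0$ and $c_1{\cal M}={\cal M}$, the latter because $vc_1=0$). In particular $\tilde f$ is surjective on ${\cal M}$; composing with the assumed surjectivity of $\sigma$ then gives the surjectivity of $f$.

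The only genuinely delicate point is the strict inequality $v(y^r-z^r)>v(y-z)$ for real exponents $r>1$ underlying the pseudo-linearity calculation. But this is precisely the extension of (\ref{va-br}) that the authors explicitly flag before the theorem, so no real obstacle arises: the proof is essentially a bookkeeping exercise around Proposition~\ref{range} and the factorization $f=\tilde f\circ\sigma$.
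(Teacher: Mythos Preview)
Your proof is correct and follows exactly the approach taken in the paper: factor $f=\tilde f\circ\sigma$ with $\sigma(y)=y^{r_1}$, verify that $\tilde f$ is pseudo-linear on ${\cal M}$ with pseudo-slope $c_1$ via the real-exponent version of (\ref{va-br}), and then invoke Proposition~\ref{range} together with spherical completeness of $k((G))$ to conclude that $\tilde f$ is onto ${\cal M}$. The paper's own argument is precisely this, stated in the paragraph preceding the theorem.
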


%
%
\subsection{Power series maps and infinite-dimensional Implicit
Function Theorems}                              \label{sectpsiift}
We use again the notations and assumptions from Section~\ref{sectidIFT}.
We take $R[[X_j,Y_i\mid j\in I',i\in I]]$ to be the set of all formal
power series in the variables $X_j,Y_i$ in which for every $n\in\N$ only
finitely many of the $X_j,Y_i$ appear to a power less than $n$. In the
previous section, our power series had well defined values because we
were operating in a power series field $k((G))$. Here, we will assume
throughout that $R$ is spherically complete. But this alone does not
a priori give us well defined values of the power series on
${\cal M}_R^{I'\cup I}$. So we will assume that we have some canonical
way to determine the value of a given power series at an element of
${\cal M}_R^I$. This holds for instance if $vR$ is archimedean, i.e., is
a subsemigroup of an archimedean ordered abelian group.

To every power series $g\in R[[Y_i\mid i\in I]]$ we associate its
\bfind{0-linear part} $L_g^0$, by which we mean the sum of all of its
monomials of total degree 1 and with a coefficient in $R$ of value 0.
This is a polynomial, i.e., contains only finitely many of the variables
$Y_i$. We set $Y=(Y_i\mid i\in I)$.

\begin{theorem}                             \label{IIFTPS}
Assume that $(R,v)$ is spherically complete. Take any index sets $I$ and
$I'$ and a system $f= (f_k)_{k\in I}$ where $f_k\in R[[X_j,Y_i\mid j\in
I',i\in I]]$. Assume that $f_k$, $k\in I$, admit a common zero
$z= (x,y)$, $x\in {\cal M}_R^{I'}$, $y\in {\cal M}_R^I$, such that for
the map $L(Y)=L^0_{f(x,Y)}(Y): {\cal M}_R^I \rightarrow {\cal M}_R^I$
the following holds: for every $a'\in {\cal M}_R^I\setminus\{0\}$ there
is some $a\in {\cal M}_R^I$ such that
\[v(a'-La)\> >\>va'\quad\mbox{ and }\quad va\>=\>va'\;.\]
Take $x'=(x'_j)_{j\in I'}\in {\cal M}_R^{I'}$, set $\alpha=v(x-x')$ and
$g(Y)=f(x',Y)$ and suppose that
\begin{equation}                            \label{IIFTPSc}
v(gw-gw'-L(w-w'))\>>\> v(gw-gw')\;\mbox{ for all distinct }
w,w'\in B_\alpha(y)\>.
\end{equation}
Then there exists a unique $(y'_i)_{i\in I}\in {\cal M}_R^I$ such that
$z'= (x'_j,y'_i\mid j\in I',i\in I)$ is a common zero of
$f_k\,$, $k\in I$, and
\[\min_{i\in I} v(y_i-y'_i)\geq \alpha\;.\]
\end{theorem}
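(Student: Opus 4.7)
The plan is to recognize $L$ as a pseudo-companion of $g$ on the ball $B_\alpha(y)=y+B_\alpha(0)$, apply Proposition~\ref{rangepsd} and Theorem~\ref{MT} to obtain that $g:B_\alpha(y)\to g(y)+L(B_\alpha(0))$ is a surjective immediate map, and then check that $-g(y)$ lies in $L(B_\alpha(0))$.

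First I would verify that $L$ is an immediate group homomorphism from ${\cal M}_R^I$ to itself. Since $L=L^0_{f(x,Y)}$ is linear in $Y$ with all coefficients of value $0$, it is a homomorphism satisfying $vLa\geq va$ for every $a\in{\cal M}_R^I$. The stated hypothesis on $L$ is condition (IH1) of Proposition~\ref{ATC}, together with the supplementary datum $va=va'$; the latter plus the ultrametric force $vLa=va'=va$ for the witness $a$, so (IH2) reads $vLa=va\leq vb\leq vLb$ whenever $vb\geq va$. Hence $L$ is immediate. Restricting to the subgroup $B_\alpha(0)$, weak value preservation gives $L(B_\alpha(0))\subseteq B_\alpha(0)$, and for $a'\in B_\alpha(0)\setminus\{0\}$ the witness already lies in $B_\alpha(0)$ since $va=va'\geq\alpha$. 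Thus $L:B_\alpha(0)\to L(B_\alpha(0))$ is immediate, giving (PC1) of Proposition~\ref{rangepsd}.

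The hypothesis (\ref{IIFTPSc}) supplies (PC2): for distinct $w,w'\in B_\alpha(y)$ the strict inequality together with the ultrametric forces $v(gw-gw')=vL(w-w')$, while $w=w'$ is trivial. In passing, (\ref{IIFTPSc}) rules out $g(w)=g(w')$ for distinct $w,w'$ (else both sides would be $\infty$), so $g$ is injective on $B_\alpha(y)$, which will supply uniqueness. Proposition~\ref{rangepsd} now gives $g(B_\alpha(y))\subseteq g(y)+L(B_\alpha(0))$ with $g$ immediate onto this image. Since $vR$ is well ordered, Proposition~\ref{prodsphc} shows that $R^I$ is spherically complete, whence so is the ball $B_\alpha(y)$; Theorem~\ref{MT} then yields surjectivity of $g:B_\alpha(y)\to g(y)+L(B_\alpha(0))$.

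For existence it remains to place $-g(y)$ in $L(B_\alpha(0))$. I would first show $L(B_\alpha(0))=B_\alpha(0)$ by applying Theorem~\ref{MTadd} to the immediate homomorphism $L:B_\alpha(0)\to B_\alpha(0)$: the domain is a spherically complete subgroup of ${\cal M}_R^I$, so $L$ is surjective onto $B_\alpha(0)$. Then I would estimate $vg(y)=v(f(x',y)-f(x,y))\geq\alpha$ via a termwise expansion of each $f_k$ in the $X_j$ around $x$: each monomial difference $(x')^\gamma-x^\gamma$ is a sum of terms divisible by some $x_j-x'_j$ with remaining factor of non-negative value, and the coefficients of $f_k$ lie in $R$, so every term contributes value $\geq\alpha$. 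Consequently $g(y)\in B_\alpha(0)=L(B_\alpha(0))$, and by the surjectivity established above there exists $y'\in B_\alpha(y)$ with $g(y')=0$; combined with the injectivity of $g$ on $B_\alpha(y)$ noted above, this gives the unique common zero required. The main technical point is the first step, where the clause $va=va'$ in the hypothesis must be combined with weak value preservation of $L$ to secure (IH2); the estimate $vg(y)\geq\alpha$ also relies implicitly on the standing assumption (from the start of Section~\ref{sectpsiift}) that power series in $R[[X_j,Y_i]]$ acquire canonical values on ${\cal M}_R^{I'\cup I}$.
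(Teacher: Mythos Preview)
Your proof is correct and follows essentially the same route as the paper: verify that $L$ is a pseudo-companion of $g$ on $B_\alpha(y)$ via Proposition~\ref{ATC} (using the extra datum $va=va'$ to secure (IH2)), invoke Proposition~\ref{rangepsd} and Theorem~\ref{MT}, and then check $vg(y)\geq\alpha$ so that $0$ lies in the image. The paper's proof is terse (``follows as in earlier proofs''), and you have filled in the details it omits; one simplification is that you may take $B'=B_\alpha(0)$ from the outset (since $vLa\geq va$), which makes the separate step of proving $L(B_\alpha(0))=B_\alpha(0)$ via Theorem~\ref{MTadd} unnecessary.
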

\begin{proof}
Note that $L_{f(x',Y)}(Y)=L_{f(x,Y)}(Y)=L(Y)$. We claim that $L$ is a
pseudo-companion of $f(x',Y):{\cal M}_R^I \rightarrow {\cal M}_R^I$ on
$B_\alpha(y)$. Condition (PC2) holds by assumption. As $L$ is a group
homomorphism, our conditions together with Proposition~\ref{ATC} show
that $L:{\cal M}_R^I \rightarrow {\cal M}_R ^I$ is immediate; note that
(IH2) holds because if $va\leq vb$ then $vLa= va\leq vb\leq vLb$. Now
the assertion of our theorem follows as in earlier proofs.
%
\end{proof}

The following version of the above theorem has a similar proof:
\begin{theorem}
Assume that $(R,v)$ is spherically complete.
Take any index sets $I$ and $I'$ and a system $f= (f_k)_{k\in I}$ where
$f_k\in R[X_j\mid j\in I'][[X_j\mid i\in I]]$.
Assume that $f_k$, $k\in I$, admit a common zero $z= (x,y)$, $x\in
R^{I'}$, $y\in {\cal M}_R^I$, such that $L(Y)=L^0_{f(x,Y)}(Y)$ satisfies
the same condition as in Theorem~\ref{IIFTPS}. Take $x'=(x'_j)_{j\in I'}
\in R^{I'}$ such that $\alpha=v(x-x') >0$. Suppose that
(\ref{IIFTPSc}) holds for $g(Y)=f(x',Y)$. Then there exists a unique
$(y'_i)_{i\in I}\in {\cal M}_R^I$ such that $z'= (x'_j,y'_i\mid j\in
I',i\in I)$ is a common zero of the polynomials $f_k\,$, $k\in I$,
and $\min_{i\in I} v(y_i-y'_i)\geq \alpha$.
\end{theorem}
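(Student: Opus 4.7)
The plan is to mirror the proof of Theorem~\ref{IIFTPS} almost verbatim, adjusting only those few points where the hypothesis on $x$ has been relaxed from $x\in{\cal M}_R^{I'}$ to $x\in R^{I'}$. The key difference is that $f_k$ is now polynomial (not a power series) in the variables $X_j$, so evaluation at arbitrary elements $x,x'\in R^{I'}$ (not only at elements of value $>0$) is unambiguously defined, and Taylor expansion in the $X_j$ works formally with no convergence issues.

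First I would verify the identity $L^0_{f(x',Y)}(Y)=L^0_{f(x,Y)}(Y)=L(Y)$. Since $\alpha=v(x-x')>0$, every coefficient of $Y_i$ in $f_k(x',Y)-f_k(x,Y)$ lies in ${\cal M}_R$: each monomial $c_{k,\mathbf{j},i}X^{\mathbf{j}}Y_i$ contributes $c_{k,\mathbf{j},i}(x^{\mathbf{j}}-(x')^{\mathbf{j}})Y_i$, a quantity of value $\geq\alpha>0$. Hence only coefficients of value $0$ in $f_k(x,Y)$ and $f_k(x',Y)$ coincide, which means the 0-linear parts agree. Next I would bound $vg(y)$ where $g(Y)=f(x',Y)$: since $f(x,y)=0$, Taylor expansion of each $f_k$ in the variables $X_j$ (polynomial, hence finite) yields
\[vg(y)\>=\>v(f(x',y)-f(x,y))\>\geq\>v(x-x')\>=\>\alpha\>.\]

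Having $L(Y)$, it is now a pseudo-companion of $g$ on $B_\alpha(y)$: condition (PC2) is exactly the hypothesis (\ref{IIFTPSc}), and (PC1) follows from the assumed condition on $L$ together with Proposition~\ref{ATC} (take (IH1) from the hypothesis; (IH2) is automatic since $L$ is a group homomorphism with $vLa=va$ for $a\in{\cal M}_R^I$). Then Proposition~\ref{rangepsd} (in the version of Proposition~\ref{rangeR}, using that $(R,v)$ is spherically complete) implies that $g$ is an isomorphism of ultrametric spaces from $y+B_\alpha(0)=B_\alpha(y)$ onto $g(y)+LB_\alpha(0)$. Since $L$ induces an immediate embedding of ${\cal M}_R^I$ with value map $\mathrm{id}$, we have $LB_\alpha(0)\supseteq B_\alpha(0)$ and $g(y)\in B_\alpha(0)\subseteq LB_\alpha(0)$; hence $0$ lies in the image, producing a unique $y'=(y'_i)_{i\in I}\in B_\alpha(y)$ with $g(y')=0$, i.e., a unique common zero $z'=(x',y')$ of the $f_k$. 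The inequality $\min_{i\in I}v(y_i-y'_i)\geq\alpha$ is just the statement $y'\in B_\alpha(y)$.

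The only genuinely new ingredients relative to Theorem~\ref{IIFTPS} are the verification of the 0-linear-part identity and the Taylor bound for $vg(y)$; both hinge on the polynomial dependence on the $X_j$ together with $v(x-x')>0$. The main (minor) obstacle is making the power-series Taylor expansion in $Y$ rigorous over ${\cal M}_R^I$ when $R$ is only assumed spherically complete: this is the same technical point already handled in the proof of Theorem~\ref{IIFTPS}, and one can either invoke that proof directly or appeal to the archimedean/well-foundedness set-up of Section~\ref{sectpsiift} which guarantees well-defined values on ${\cal M}_R^I$. No further delicate structure is needed.
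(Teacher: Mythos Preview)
Your proposal is correct and follows essentially the same route as the paper, which simply remarks that the proof is ``similar'' to that of Theorem~\ref{IIFTPS}. You correctly isolate the two points that need adjustment: the equality $L^0_{f(x',Y)}=L^0_{f(x,Y)}$ (using that the $X_j$-dependence is polynomial and $v(x-x')>0$) and the Taylor bound $vg(y)\geq\alpha$.

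One small inaccuracy: you write that ``$L$ induces an immediate \emph{embedding} of ${\cal M}_R^I$ with value map $\mathrm{id}$'', but the hypothesis only gives $vLa=va$ for the particular $a$ produced for each $a'$, not for all $a$; in general $L$ need not be injective. This does not affect your argument: what you actually need (and have) is that $L:B_\alpha(0)\to B_\alpha(0)$ is an immediate group homomorphism, so Proposition~\ref{rangepsd} applies with $B'=B_\alpha(0)$, giving that $g:B_\alpha(y)\to g(y)+B_\alpha(0)$ is immediate and hence surjective by Theorem~\ref{MT}. The injectivity of $g$ on $B_\alpha(y)$ (and thus uniqueness of $y'$) is already forced by hypothesis~(\ref{IIFTPSc}), since $gw=gw'$ with $w\ne w'$ would make that inequality read $\infty>\infty$. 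So you can drop the word ``embedding'' and the detour through $LB_\alpha(0)\supseteq B_\alpha(0)$ and simply cite $B'=B_\alpha(0)$ directly.
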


\parm
Alternatively, in order to obtain maps on all of $R$, one can consider
convergent power series. We let $R\{\{X_j,Y_i\mid j\in I',i\in I\}\}$ be
the set of all formal power series in the variables $X_j,Y_i$ in which
for every $\alpha\in vR$ only finitely many monomials have coefficients
of value less than $\alpha$. Again we assume that $R$ is spherically
complete. Then every convergent power series defines a map from $R$ into
$R$. In a similar way as before, one can prove:

\begin{theorem}
Assume that $(R,v)$ is spherically complete. Take any index sets $I$ and
$I'$ and a system $f= (f_k)_{k\in I}$ where $f_k\in R\{\{X_j,Y_i\mid
j\in I',i\in I\}\}$. Assume that $f_k$, $k\in I$, admit a common zero
$z= (x,y)$, $x\in R^{I'}$, $y\in R^I$, such that $L(Y)=L^0_{f(x,Y)}(Y)$
satisfies the same condition as in Theorem~\ref{IIFTPS}. Take $x'=
(x'_j)_{j\in I'}\in R^{I'}$ such that $\alpha=v(x-x') >0$. Suppose that
(\ref{IIFTPSc}) holds for $g(Y)=f(x',Y)$. Then there exists a unique
$(y'_i)_{i\in I}\in R^I$ such that $z'= (x'_j,y'_i\mid j\in I',i\in I)$
is a common zero of the polynomials $f_k\,$, $k\in I$, and $\min_{i\in
I} v(y_i-y'_i)\geq \alpha$.
\end{theorem}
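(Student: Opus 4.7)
The argument is a direct adaptation of the proof of Theorem~\ref{IIFTPS}; the one genuinely new point is that since the $f_k$ are convergent (rather than merely formal) power series, the inputs $x,x'\in R^{I'}$ and $y\in R^I$ need no longer lie in ${\cal M}_R$, so we must operate on the ball $B_\alpha(y)$ inside $R^I$ rather than on ${\cal M}_R^I$ itself.

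First I would verify the key compatibility $L^0_{f(x,Y)}(Y)=L^0_{f(x',Y)}(Y)$: substituting $x'$ for $x$ in a convergent power series alters each coefficient of a monomial in $Y$ by a convergent sum of terms of the form $c(x^a-x'^a)$, and every such difference factors through $x-x'$. Since $R$ has $vR\geq 0$, this estimate yields value $\geq\alpha>0$ and so cannot affect the value-$0$ parts of the linear coefficients. Thus the $L$ in the hypothesis really is the intended linear approximation of $g(Y)=f(x',Y)$ at $Y=y$.

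Next I would check that $L$ is a pseudo-companion of $g$ on $B_\alpha(y)$. Condition (PC2) is precisely the assumption (\ref{IIFTPSc}). For (PC1) -- that the group homomorphism $L:{\cal M}_R^I\to{\cal M}_R^I$ is immediate -- I apply Proposition~\ref{ATC}: the hypothesis supplies $a$ with $v(a'-La)>va'$, and because $L$ has coefficients in $R$ (hence of value $\geq 0$) one has $vLb\geq vb$ for every $b$; combined with $vLa=va=va'$ (forced by the ultrametric triangle law) this delivers both (IH1) and (IH2). Since $(R,v)$ is spherically complete and $vR$ is well-ordered, Proposition~\ref{prodsphc} makes $R^I$ and hence $B_\alpha(y)$ spherically complete, and Proposition~\ref{rangepsd} together with Theorem~\ref{MT} then promotes $g$ to a surjection $B_\alpha(y)\to g(y)+LB_\alpha(0)$.

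To locate the zero I must show $-g(y)\in LB_\alpha(0)$. Since $f(x,y)=0$, we have $g(y)=f(x',y)-f(x,y)$; expanding term by term (legitimate because only finitely many monomials contribute below each value threshold in a convergent series) every summand factors through $x-x'$ and has value $\geq\alpha$, so $g(y)\in B_\alpha(0)$. Applying Theorem~\ref{MTadd} to $L$ restricted to the spherically complete subgroup $B_\alpha(0)$ (which $L$ stabilizes and on which it is immediate by the same verification as above) shows that $L$ is surjective on $B_\alpha(0)$, so some $a\in B_\alpha(0)$ satisfies $La=-g(y)$, and the pseudo-companion property then produces $g(y+a)=0$. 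Setting $y'=y+a$ gives $\min_i v(y_i-y'_i)=va\geq\alpha$, and uniqueness follows just as in the proof of Theorem~\ref{IIFTPS}. The main obstacle is the bookkeeping for convergent power series -- verifying that the term-by-term value estimates $v(x^a-x'^a)\geq\alpha$ aggregate correctly under evaluation -- which is a routine consequence of the convergence definition, but it is the one place where the argument genuinely departs from the formal case.
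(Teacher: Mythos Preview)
Your overall strategy matches the paper's: the paper simply says ``In a similar way as before, one can prove'' this theorem, and your reduction to showing that $L$ is a pseudo-companion of $g$ on $B_\alpha(y)$, together with the estimate $vg(y)\geq\alpha$, is exactly the intended route.

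There is, however, a genuine error in your final step. You write that once $a\in B_\alpha(0)$ satisfies $La=-g(y)$, ``the pseudo-companion property then produces $g(y+a)=0$'', and you set $y'=y+a$. This is false: condition (PC2) only gives
\[
v\bigl(g(y+a)-g(y)-La\bigr)>v\bigl(g(y+a)-g(y)\bigr)=vLa,
\]
which yields $vg(y+a)>vg(y)$ --- an improvement, not a zero. The pseudo-companion property is an approximation statement, not $g(y+a)=g(y)+La$.

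The fix is simpler than your detour through Theorem~\ref{MTadd}. Proposition~\ref{rangepsd} already tells you that $g$ maps $B_\alpha(y)$ immediately into $g(y)+B'$, where $B'$ is the codomain ball for which $L:B_\alpha(0)\to B'$ is immediate; since $vLb\geq vb$ and the hypothesis gives $va=va'$, one may take $B'=B_\alpha(0)$. Theorem~\ref{MT} then makes $g:B_\alpha(y)\to g(y)+B_\alpha(0)$ surjective. Your estimate $vg(y)\geq\alpha$ says precisely that $0\in g(y)+B_\alpha(0)$, so surjectivity directly furnishes $y'\in B_\alpha(y)$ with $g(y')=0$. There is no need to exhibit $y'$ explicitly as $y+a$, and no need to invoke surjectivity of $L$ separately. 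Uniqueness then follows because hypothesis~(\ref{IIFTPSc}), applied to distinct $w,w'$ with $g(w)=g(w')$, would force $v(-L(w-w'))>\infty$, which is impossible.
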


%
%
\section{Polynomials in additive operators}
In this section, we will consider polynomials $f\in {\cal O}[X_0,X_1,
\ldots,X_n]$ over valued fields $(K,v)$ and additive operators
$\sigma_i: K\rightarrow K$, $0\leq i\leq n$. We write $\sigma=(\sigma_0,
\ldots,\sigma_n)$. We will try to solve equations in one variable of
the form
\[f^\sigma X\>:=\>f(\sigma_0 X,\sigma_1 X,\ldots,\sigma_n X)\>=\>0\;.\]

%
%
\subsection{A basic result}                \label{sectbr}
For any polynomial $f$ in $n+1$ variables over a field of arbitrary
characteristic, we denote by $f^{[\,\ul{i}\,]}$ its $\ul{i}$-th formal
derivative, where $\ul{i}=(i_0,\ldots,i_n)$ is a multi-index.
These polynomials are defined such that the following analogue of
(\ref{Tayl1}) holds in arbitrary characteristic:
\begin{equation}                            \label{Tayl1m}
f(b+\varepsilon)\>=\>f(b)+\sum_{i\in I}
f^{[\,\ul{i}\,]} (b) \varepsilon^{\ul{i}} \;\;\;\mbox{ for all }
b,\varepsilon\in K^{n+1}\;,
\end{equation}
where $I=\{0,1,\ldots,\deg f\}^{n+1}\setminus\{(0,\ldots,0)\}$ and
$\varepsilon^{\ul{i}}= \varepsilon_0^{i_0}\cdot\ldots\cdot
\varepsilon_n^{i_n}$. Note that if $\ul{i}=(0,\ldots,0,1,0,\ldots,0)$
with the $1$ in the $j$-th place, then
$f^{[\,\ul{i}\,]}=\frac{\partial f}{\partial X_j}(X_0,\ldots,X_n)$.

\begin{lemma}                               \label{lvf-psi}
Take $f\in {\cal O}[X_0,\ldots,X_n]$ and $b\in
{\cal O}^{n+1}$, $s\in {\cal O}$ such that
\[vs\>=\>\min_{0\leq i\leq n} v\frac{\partial f}{\partial X_i}(b)
\><\>\infty\;.\]
Then for all distinct $y=(y_0,\ldots,y_n)$ and
$z=(z_0,\ldots,z_n)$ in $\,b+ s{\cal M}^{n+1}$,
\begin{equation}                            \label{vf-psi}
v\left(f(y)-f(z)\>-\>\sum_{i=0}^n (y_i-z_i)
\frac{\partial f}{\partial X_i}(b)\right)\> >\> vs+
\min_{0\leq i\leq n} v(y_i-z_i)
\end{equation}
and
\begin{equation}                            \label{vf-f}
v(f(y)-f(z))\>\geq\> vs+\min_{0\leq i\leq n} v(y_i-z_i)\;.
\end{equation}
In particular,
\[f(b+ s{\cal M}^{n+1})\>\subseteq\>f(b)+ s^2{\cal M}^{n+1}\;.\]
\end{lemma}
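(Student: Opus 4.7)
The plan is to work directly from the Taylor expansion (\ref{Tayl1m}) applied separately to $f(y)$ and $f(z)$, then subtract and analyze the resulting sum monomial by monomial, separating the linear part (which produces the $\sum_i (y_i-z_i)\frac{\partial f}{\partial X_i}(b)$ term) from the contribution of multi-indices $\underline{i}$ with $|\underline{i}|\geq 2$.

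Writing $y=b+\varepsilon^y$ and $z=b+\varepsilon^z$ with $\varepsilon^y,\varepsilon^z\in s{\cal M}^{n+1}$ (so each coordinate has value $>vs$), the Taylor expansion yields
\[
f(y)-f(z)\;=\;\sum_{\underline{i}}f^{[\,\underline{i}\,]}(b)\bigl((\varepsilon^y)^{\underline{i}}-(\varepsilon^z)^{\underline{i}}\bigr),
\]
where $f^{[\,\underline{i}\,]}(b)\in{\cal O}$ by the assumption $b\in{\cal O}^{n+1}$ and $f\in{\cal O}[X_0,\ldots,X_n]$. For $|\underline{i}|=1$ the contribution is precisely $\sum_{i=0}^n (y_i-z_i)\frac{\partial f}{\partial X_i}(b)$, whose valuation is at least $\min_i v(y_i-z_i)+vs$ by the definition of $vs$. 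For $|\underline{i}|=k\geq 2$, I would use the standard telescoping identity
\[
a_1\cdots a_k-c_1\cdots c_k\;=\;\sum_{l=1}^k a_1\cdots a_{l-1}(a_l-c_l)c_{l+1}\cdots c_k
\]
to express $(\varepsilon^y)^{\underline{i}}-(\varepsilon^z)^{\underline{i}}$ as a sum of products in which exactly one factor is some $y_j-z_j$ and the remaining $k-1$ factors lie in $s{\cal M}$, hence each have value strictly greater than $vs$. The valuation of each such term is therefore strictly greater than $v(y_j-z_j)+(k-1)vs\geq \min_i v(y_i-z_i)+vs$ (using $vs\geq 0$ and $k\geq 2$). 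Multiplying by the ${\cal O}$-valued coefficient $f^{[\,\underline{i}\,]}(b)$ preserves this strict inequality, proving (\ref{vf-psi}).

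To deduce (\ref{vf-f}), combine the estimate on the linear part with the strict inequality just established: the minimum governs the valuation of the sum, giving $v(f(y)-f(z))\geq vs+\min_i v(y_i-z_i)$. The final inclusion then follows by specializing $z=b$: every coordinate of $y-b$ lies in $s{\cal M}$, so $v(y_i-b_i)>vs$ for all $i$, and (\ref{vf-f}) gives $v(f(y)-f(b))>2vs$, i.e.\ $f(y)\in f(b)+s^2{\cal M}$ (understanding the stated $s^2{\cal M}^{n+1}$ as $s^2{\cal M}$, since $f$ is scalar-valued). The only non-routine step is the telescoping bookkeeping for $|\underline{i}|\geq 2$; once that identity is in hand, everything else reduces to adding valuations and using $f^{[\,\underline{i}\,]}(b)\in{\cal O}$.
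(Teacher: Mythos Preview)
Your proof is correct and follows essentially the same route as the paper: both subtract the Taylor expansions at $b$, separate the $|\underline{i}|=1$ terms, and then bound $(\varepsilon^y)^{\underline{i}}-(\varepsilon^z)^{\underline{i}}$ for $|\underline{i}|\geq 2$. The only difference is that the paper obtains this bound by a short induction on $|\underline{i}|$ (splitting off one factor and using $\delta^{\underline{i}}-\varepsilon^{\underline{i}}=(\delta_j-\varepsilon_j)\delta^{\underline{i}'}+\varepsilon_j(\delta^{\underline{i}'}-\varepsilon^{\underline{i}'})$), whereas you unwind this into the full telescoping identity in one step; the two arguments are equivalent. Your observation that the target in the final inclusion should read $s^2{\cal M}$ rather than $s^2{\cal M}^{n+1}$ is also correct.
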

\begin{proof}
Since $f\in {\cal O}[X_0,\ldots,X_n]$, we have that $f^{[\,\ul{i}\,]}
\in {\cal O} [X_0,\ldots,X_n]$. Since $b\in {\cal O}^{n+1}$, we
also have that $f^{[\,\ul{i}\,]}(b)\in {\cal O}$. Write $y=
b+\delta$ and $z=b+ \varepsilon$ with
$\delta=(\delta_0,\ldots,\delta_n), \varepsilon=
(\varepsilon_0,\ldots,\varepsilon_n)\in s{\cal M}^{n+1}$. Then by
(\ref{Tayl1m}),
\[f(y)-f(z)\>=\>\sum_{i=0}^n (\delta_i-
\varepsilon_i) \frac{\partial f}{\partial X_i}(b)\;+\;
\sum_{i\in I'} (\delta^{\ul{i}}-\varepsilon^{\ul{i}})
f^{[\,\ul{i}\,]} (b)\]
where $I'=\{\ul{i}\in I\mid |\ul{i}|\geq 2\}$ with
$|\ul{i}|:=i_0+\ldots+i_n\,$.

Choose $c\in {\cal M}$ such that $vc=\min_i v(\delta_i-\varepsilon_i)
=\min_i v(y_i-z_i)$. Pick $j\in\{0,\ldots,n\}$ and take $\ul{i}\in I'$
such that $i_j\ne 0$. Let $\ul{i}'\in I$ be the multi-index obtained
from $\ul{i}$ by subtracting $1$ in the $j$-th place.
Then
\[\delta^{\ul{i}}-\varepsilon^{\ul{i}}\>=\>
\delta_j \delta^{\ul{i}'} -\varepsilon_j\varepsilon^{\ul{i}'}
\>=\>(\delta_j-\varepsilon_j)\delta^{\ul{i}'}
+\varepsilon_j(\delta^{\ul{i}'}-\varepsilon^{\ul{i}'})\]
Suppose we have already shown by induction on $|\ul{i}|$ that
$\delta^{\ul{i}'}-\varepsilon^{\ul{i}'}\in c{\cal O}$. Since
$\delta_j-\varepsilon_j\in c{\cal O}$ and $\delta^{\ul{i}'},
\varepsilon_j\in s{\cal M}$, we then find that
\[\delta^{\ul{i}}-\varepsilon^{\ul{i}}\>\in\> s c{\cal M}\]
for every multi-index $\ul{i}$ with $|\ul{i}|\geq 2$. Since also
$f^{[\,\ul{i}\,]}
(b)\in {\cal O}$, we obtain that
\[f(y)-f(z)\>-\>\sum_{i=0}^n (\delta_i-\varepsilon_i)
\frac{\partial f}{\partial X_i}(b)\>=\>
\sum_{i\in I'} (\delta^{\ul{i}}-\varepsilon^{\ul{i}})
f^{[\,\ul{i}\,]} (b) \>\in\> s c{\cal M}\;.\]
This proves (\ref{vf-psi}). To prove (\ref{vf-f}), we observe that
\[v\, \sum_{i=0}^n (y_i-z_i)\frac{\partial f}{\partial X_i}(b)
\>\geq\>\min_{0\leq i\leq n} v(y_i-z_i)\frac{\partial f}{\partial X_i}
(b) \>\geq\>vs+ \min_{0\leq i\leq n} v(y_i-z_i)\]
and therefore,
\begin{eqnarray*}
\lefteqn{v(f(y)-f(z))\>\geq}\\
& \geq & \min\left\{v\left(f(y)-f(z)
\>-\>\sum_{i=0}^n (y_i-z_i) \frac{\partial f}{\partial X_i}(b)
\right)\,,\, v\, \sum_{i=0}^n (y_i-z_i)
\frac{\partial f}{\partial X_i}(b)\right\}\\
 & \geq & vs+\min_{0\leq i\leq n} v(y_i-z_i)\;.
\end{eqnarray*}
The last assertion is obtained by applying (\ref{vf-psi}) with $z=b$.
\end{proof}

\begin{proposition}                         \label{psi}
Take
\sn
$\bullet$ \ additive operators $\sigma_i: {\cal O}\rightarrow
{\cal O}\,$, $\>0\leq i\leq n$,
\sn
$\bullet$ \ $f\in {\cal O}[X_0,\ldots,X_n]$,
\sn
$\bullet$ \ $b\in {\cal O}$ such that at least one of the following
derivatives is not zero:
\begin{equation}                            \label{d_i}
d_i\>:=\>\frac{\partial f}{\partial X_i}(\sigma_0 b, \sigma_1
b,\ldots,\sigma_n b)\qquad (0\leq i\leq n),
\end{equation}
$\bullet$ \ $s\in {\cal O}$ such that
\begin{equation}                            \label{vs}
vs\>=\>\min_{0\leq i\leq n} vd_i\;.
\end{equation}
Suppose that
\sn
{\bf (V$\geq$)} \ $v\sigma_i a\>\geq\>va$ \ for all \ $a\in
{\cal O}\qquad (0\leq i\leq n)\,$
\sn
holds and that the additive operator
\[\phi\>:=\>\sum_{i=0}^{n}d_i\sigma_i:\;s{\cal M}\longrightarrow
s^2{\cal M}\]
has the property that for all $a'\in s^2{\cal M}$ there is some $a\in
s{\cal M}$ such that $v(a'-\phi a)>va'$ and $va=va'-vs$.
Then the maps $\phi$ and
\[b+s{\cal M}\ni x\;\mapsto\; f^\sigma x\in f^\sigma b +s^2{\cal M}\]
are immediate.
\end{proposition}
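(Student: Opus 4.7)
The plan is to apply Proposition~\ref{BCb} twice: first with $f$ and the auxiliary map both taken to be $\phi$, to conclude immediateness of $\phi$; and second with $f=f^\sigma$ and auxiliary map $\phi$, to conclude immediateness of $x\mapsto f^\sigma x$ on $b+s{\cal M}$. In both invocations, condition (BC1) is supplied essentially verbatim by the hypothesis on $\phi$.

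First I would verify that the map ranges are as claimed. From (V$\geq$) and $vd_i\geq vs$, each summand $d_i\sigma_i a$ of $\phi a$ has value $\geq vs+va$, so $\phi$ sends $s{\cal M}$ into $s^2{\cal M}$. For $f^\sigma$, set $\tilde b=(\sigma_0b,\ldots,\sigma_nb)$; by additivity of the $\sigma_i$ together with (V$\geq$), any $y\in b+s{\cal M}$ satisfies $(\sigma_0y,\ldots,\sigma_ny)\in\tilde b+s{\cal M}^{n+1}$, and the final assertion of Lemma~\ref{lvf-psi} then gives $f^\sigma y\in f^\sigma b+s^2{\cal M}$.

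Next I would check that the element $a$ supplied by the hypothesis belongs to $\Reg(\phi)$. Clearly $a\ne 0$ because $va=va'-vs<\infty$. For any $c\in s{\cal M}$ with $vc\geq va$, each $d_i\sigma_i c$ has value $\geq vs+vc\geq vs+va=va'=v\phi a$, where the equality $v\phi a=va'$ follows from (BC1) by the ultrametric triangle law; hence $v\phi c\geq v\phi a$, which is (IH2). For the first invocation, (BC2) is automatic by additivity of $\phi$, so Proposition~\ref{BCb} yields that $\phi$ is immediate. For the second invocation, take $y,z\in b+s{\cal M}$ with $v(y-z)\geq va$, set $\tilde y_i:=\sigma_iy$, $\tilde z_i:=\sigma_iz$, and note that additivity of the $\sigma_i$ gives $\sum_i(\tilde y_i-\tilde z_i)d_i=\phi(y-z)$. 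Lemma~\ref{lvf-psi} then yields
\[v\bigl(f^\sigma y-f^\sigma z-\phi(y-z)\bigr)\;>\;vs+\min_i v\sigma_i(y-z)\;\geq\;vs+va\;=\;v\phi a,\]
which is exactly (BC2).

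The delicate point — and the reason I expect to use Proposition~\ref{BCb} rather than the more streamlined pseudo-companion criterion of Proposition~\ref{rangepsd} — is that $v\phi(y-z)$ may strictly exceed $vs+\min_i v\sigma_i(y-z)$ through cancellation among the terms $d_i\sigma_i(y-z)$, so the estimate from Lemma~\ref{lvf-psi} need not upgrade to the pseudo-companion inequality $v(f^\sigma y-f^\sigma z-\phi(y-z))>v\phi(y-z)$. Comparing instead to the fixed value $v\phi a$, as (BC2) requires, bypasses this difficulty entirely.
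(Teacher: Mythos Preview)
Your proposal is correct and follows essentially the same route as the paper: verify the ranges, use (V$\geq$) together with Lemma~\ref{lvf-psi} to obtain the estimate $v(f^\sigma y-f^\sigma z-\phi(y-z))>vs+v(y-z)$, check that the element $a$ from the hypothesis lies in $\Reg(\phi)$ with $v\phi a=vs+va$, and feed all of this into Proposition~\ref{BCb}. The only cosmetic difference is that the paper concludes immediateness of $\phi$ directly from Proposition~\ref{ATC} (since $\phi$ is a group homomorphism and (IH1), (IH2) have just been verified), whereas you phrase it as a degenerate instance of Proposition~\ref{BCb} with $f=\phi$; these are the same check. Your closing remark about why Proposition~\ref{rangepsd} would not suffice is apt and reflects the actual obstruction.
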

\begin{proof}
%
For all $a\in s{\cal M}$, the definition of $s$ together with
(V$\geq$) yields
\begin{equation}                            \label{vphia}
v\phi a\>=\>v\sum_{i=0}^{n}d_i\sigma_i a\>\geq\>
\min_{0\leq i\leq n} vd_i\sigma_i a\>\geq\>
\min_{0\leq i\leq n} vd_i \,+\,va\>=\>vs+va\;.
\end{equation}
%
%
We wish to apply Proposition~\ref{BCb} to the map $f^\sigma$.
Take distinct elements $y,z\in b+s{\cal M}$. From (V$\geq$) it follows
that $b_i:=\sigma_i b\in {\cal O}$, $y_i:=\sigma_i y\in
{\cal O}$, $z_i:= \sigma_i z\in {\cal O}$ with $y_i-b_i=\sigma_i(y-b)
\in s{\cal M}$ and $z_i-b_i=\sigma_i(z-b)\in s{\cal M}$, so
$(y_0,\ldots,y_n)\,,\, (z_0,\ldots,z_n)\in (b_0,\ldots,b_n)+
s{\cal M}^{n+1}$. Thus we can apply Lemma~\ref{lvf-psi} to
obtain
\begin{eqnarray*}
\lefteqn{v(f^\sigma y-f^\sigma z\,-\,\phi (y-z))\>=\>
v\left(f^\sigma y-f^\sigma z\,-\,\sum_{i=0}^n
d_i\sigma_i (y-z)\right)}\\
& = & v\left(f(\sigma_0 y,\ldots,\sigma_n y)
-f(\sigma_0 z,\ldots,\sigma_n z)-\sum_{i=0}^n
(\sigma_i y- \sigma_i z) \frac{\partial f}{\partial X_i}
(\sigma_0 b,\ldots,\sigma_n b)\right)\\
&>& vs+\min_i v(\sigma_i y-\sigma_i z) \>=\>vs+\min_i v\sigma_i
(y-z)\>\geq\> vs+v(y-z)\;.
\end{eqnarray*}
We also obtain that $f^\sigma(b+s{\cal M})\subseteq f^\sigma b
+s^2{\cal M}$.

Now take any $a'\in s^2{\cal M}$. By assumption, there is some $a\in
s{\cal M}$ such that $v(a'-\phi a)>va'$ and $va=va'-vs=v\phi a-vs$.
Take distinct elements $y,z\in b+s{\cal M}$ such that $v(y-z)\geq va$.
By what we have shown above, $v(f^\sigma y-f^\sigma z-\phi (y-z))>
vs+v(y-z)\geq vs+va =v\phi a$.

We have to show that $a\in\Reg(\phi)$. Indeed, if $va\leq vb$, then
$v\phi a = vs+va\leq vs+vb\leq v\phi b$ by (\ref{vphia}). It now follows
from Proposition~\ref{ATC} that $\phi$ is immediate, and from
Proposition~\ref{BCb} that $f^\sigma$ is immediate.
\end{proof}

In the next section, we give a criterion which guarantees that the
hypothesis of Proposition~\ref{psi} on the operator $\phi$ is satisfied.

%
%
\subsection{The case of operators compatible with a weak coefficient
map}                                        \label{sectwcm}
Let us start with the following useful observation.

\begin{lemma}                               \label{exco}
Let $(K,v)$ be any valued field. For all $\alpha\in vK$, choose elements
\begin{equation}                            \label{ma}
m_\alpha\in K \mbox{ \ such that \ } vm_\alpha\>=\>\alpha\mbox{ \
and \ } m_0=1\;.
\end{equation}
Define $\co 0:=0$ and
\[\co a \>:=\> (m_{-va}\,a)\,v\mbox{ \ for all \ } a\in K
\setminus \{0\}\;.\]
Then $\co$ has the following properties:
\sn
{\bf (WCM0)} \ $\co a=0$ if and only if $a=0$,
\n
{\bf (WCM1)} \ if $va=0$, then $\co a=av$,
\n
{\bf (WCM2)} \ if $va_1=va_2=\ldots=va_k$
and $\sum_{i=1}^{k}\co a_i\ne 0$, then $\co
(\sum_{i=1}^{k} a_i)=\sum_{i=1}^{k}\co a_i\,$,
\n
{\bf (WCM3)} \ if $\co a=\co b$ and $va=vb$, then $v(a-b)>va$,
\n
{\bf (WCM4)} \ if $\gamma\in vK$ and $0\ne\ovl{a}\in Kv$, then
$\exists a\in K: \co a=\ovl{a}$ and $va=\gamma$.
\end{lemma}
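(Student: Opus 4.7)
The proof is a direct verification of each of the five properties from the definition $\co a = (m_{-va} a)v$ (for $a \neq 0$), together with the normalization $m_0 = 1$.

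My plan is to start with the two easiest items. For (WCM0), if $a \neq 0$ then $m_{-va} a$ has value $-va + va = 0$, so it is a unit in $\mathcal{O}$ and therefore has nonzero residue in $Kv$; conversely $\co 0 = 0$ by definition. For (WCM1), when $va = 0$ we have $m_{-va} = m_0 = 1$, so $\co a = (1 \cdot a)v = av$ straight from the definition.

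The key item is (WCM2). Since all $a_i$ share the same value $\alpha$, we have $m_{-va_i} = m_{-\alpha}$ for every $i$, so
\[
\sum_{i=1}^k \co a_i \;=\; \sum_{i=1}^k (m_{-\alpha} a_i)v \;=\; \Bigl(m_{-\alpha}\sum_{i=1}^k a_i\Bigr)v,
\]
using additivity of the residue map on $\mathcal{O}$. The hypothesis that this sum is nonzero forces $v(m_{-\alpha}\sum_i a_i) = 0$, so $v(\sum_i a_i) = \alpha$; then applying the definition of $\co$ to $\sum_i a_i$ yields $\co(\sum_i a_i) = (m_{-\alpha}\sum_i a_i)v$, which is what we just computed. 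The one subtle point here is precisely that equality $v(\sum a_i) = \alpha$ (rather than the automatic $\geq \alpha$), and this is exactly where the nonvanishing hypothesis is used.

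For (WCM3), $\co a = \co b$ with $va = vb = \alpha$ gives $(m_{-\alpha}(a-b))v = 0$, hence $v(m_{-\alpha}(a-b)) > 0$, so $v(a-b) > \alpha = va$. Finally, for (WCM4), given $\gamma \in vK$ and a nonzero residue $\bar a \in Kv$, pick a unit $u \in \mathcal{O}^\times$ with $uv = \bar a$ and set $a := m_{-\gamma}^{-1} u$; then $va = \gamma + 0 = \gamma$ and $\co a = (m_{-\gamma}\cdot m_{-\gamma}^{-1} u)v = uv = \bar a$. No step should present a genuine obstacle; the only one that requires any thought is the value computation in (WCM2), and the only one requiring a construction is (WCM4), which is settled by the explicit choice $a = m_{-\gamma}^{-1}u$.
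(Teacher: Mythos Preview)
Your proof is correct and follows essentially the same approach as the paper's own proof: a direct verification of each property from the definition, with the same key observation in (WCM2) that the nonvanishing hypothesis forces $v(\sum a_i)=\alpha$, and the same explicit construction $a=m_{-\gamma}^{-1}u$ for (WCM4).
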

\begin{proof}
Since $(m_{-va}\,a)v\ne 0$ for $a\ne 0$, (WCM0) holds.
Since $m_0=1$, also (WCM1) holds.

\pars
If $va_1=va_2=\ldots=va_k$ and $\sum_{i=1}^{k}\co a_i\ne 0$, then
$m_{-va_1}=m_{-va_2}=\ldots= m_{-va_k}$ and
\[0\>\ne\> \sum_{i=1}^{k}\co a_i\>=\>\sum_{i=1}^{k}(m_{-va_i}\,a_i)\,v
\>=\>\sum_{i=1}^{k}(m_{-va_1}\,a_i)\,v\>=\>\left(m_{-va_1}\sum_{i=1}^{k}
a_i\right) v\;,\]
whence $vm_{-va_1}\sum_{i=1}^{k}a_i=0$ and therefore,
$v\sum_{i=1}^{k}a_i=va_1\,$. Hence,
\[\sum_{i=1}^{k} \co a_i  \>=\> \left(m_{-va_1}\sum_{i=1}^{k}
a_i\right) v \>=\> \co\left(\sum_{i=1}^{k}a_i\right)\;.\]
This shows that (WCM2) holds.

\pars
If $va=vb$ and $\co a=\co b$, then
\[(m_{-va}\,a)\,v\>=\>\co a\>=\>\co b\>=\>(m_{-vb}\,b)\,v
\>=\>(m_{-va}\,b)\,v\;,\]
so $0<v(m_{-va}a-m_{-va}b)=vm_{-va}+v(a-b)=-va+v(a-b)$, that is,
$v(a-b)>va$. This shows that (WCM3) holds.

\pars
If $\gamma\in vK$ and $0\ne\ovl{a}\in Kv$, we choose $a_0\in
{\cal O}^\times$ such that $a_0v=\ovl{a}$. Then we set
$a=m_{-\gamma}^{-1}a_0$. This gives $va=-vm_{-\gamma}=\gamma$ and
$\co a=(m_{-\gamma}(m_{-\gamma}^{-1}a_0))v =a_0v=\ovl{a}$.
Hence, (WCM4) holds.
\end{proof}
\n
A map $\co$ with properties (WCM0) -- (WCM4) will be called a
\bfind{weak coefficient map}. We will assume that the operators
$\sigma_i$ satisfy (V$\geq$); hence they induce additive operators
$\ovl{\sigma}_i$ on $Kv$:
\begin{equation}                            \label{s_iav}
\mbox{for all $a\in {\cal O}$,} \quad
\ovl{\sigma}_i(av)\>=\>(\sigma_i a)v \qquad (0\leq i\leq n)\;.
\end{equation}
We will need some stronger compatibility of the $\sigma_i$ with the weak
coefficient map:

\begin{lemma}                               \label{lIO}
Assume that the operators $\sigma_i$ satisfy {\rm (V$\geq$)} and that
the elements $m_\alpha$ in (\ref{ma}) can be chosen such that
\begin{equation}                            \label{IOc}
\mbox{for all $a\in {\cal O}$,}
\quad v(\sigma_i m_{-va} a\,-\,m_{-va}\sigma_i a)\>>\>0
\qquad (0\leq i\leq n)\;.
\end{equation}
Then
\begin{equation}                            \label{IO}
\mbox{for all $a\in {\cal O}$ and all $d\in {\cal O}^\times$,}
\quad (\co d)\, \ovl{\sigma}_i \,\co a \> = \left\{
\begin{array}{cl}
\co (d\sigma_i a) & \mbox{if \ } v\sigma_i a=va\\
0 & \mbox{if \ } v\sigma_i a>va
\end{array}\right.
\end{equation}
\end{lemma}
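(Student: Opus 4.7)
The plan is to unwind the definitions of $\co$, $\ovl{\sigma}_i$, and the hypothesis (\ref{IOc}) in sequence, reducing the claim to a direct identification of residues. Throughout, we may assume $a\ne 0$, since if $a=0$ both sides of the desired identity vanish (using additivity of $\sigma_i$ in the second case). Since $v(m_{-va}a)=0$, the element $m_{-va}a$ lies in $\mathcal{O}$, so by the defining property (\ref{s_iav}) of $\ovl{\sigma}_i$ we have
\[
\ovl{\sigma}_i\,\co a \>=\> \ovl{\sigma}_i\bigl((m_{-va}a)v\bigr)\>=\>(\sigma_i m_{-va}a)v\;.
\]
By hypothesis (\ref{IOc}), $v(\sigma_i m_{-va}a - m_{-va}\sigma_i a)>0$, so passing to the residue gives the key identity
\[
\ovl{\sigma}_i\,\co a \>=\> (m_{-va}\sigma_i a)v\;.
\]

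First I would handle the case $v\sigma_i a=va$. Here $m_{-v\sigma_i a}=m_{-va}$, so the right-hand side above is exactly $\co(\sigma_i a)$. Since $d\in\mathcal{O}^\times$ we have $vd=0$, hence $\co d=dv$ and $v(d\sigma_i a)=v\sigma_i a=va$, which gives $m_{-v(d\sigma_i a)}=m_{-va}$. Combining,
\[
(\co d)\,\ovl{\sigma}_i\,\co a \>=\> (dv)\,(m_{-va}\sigma_i a)v \>=\> (d\,m_{-va}\sigma_i a)v \>=\> \co(d\sigma_i a)\;,
\]
as required.

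In the case $v\sigma_i a>va$, the element $m_{-va}\sigma_i a$ has value $-va+v\sigma_i a>0$, so $(m_{-va}\sigma_i a)v=0$. The key identity then yields $\ovl{\sigma}_i\,\co a=0$, and multiplication by $\co d$ preserves this, giving $(\co d)\,\ovl{\sigma}_i\,\co a=0$. I do not anticipate any serious obstacle; the main point is simply to keep track of which representative $m_{-v(\cdot)}$ is being used at each step and to apply (\ref{IOc}) at the one place where the non-commutation of $\sigma_i$ with multiplication by $m_{-va}$ enters.
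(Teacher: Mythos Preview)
Your proof is correct and follows essentially the same route as the paper: both compute $\ovl{\sigma}_i\,\co a$ via (\ref{s_iav}), replace $\sigma_i(m_{-va}a)$ by $m_{-va}\sigma_i a$ using (\ref{IOc}), and then split into the two cases according to whether $v\sigma_i a=va$ or $v\sigma_i a>va$. The only cosmetic difference is that the paper carries the factor $\co d=(dv)$ through the whole chain from the outset, whereas you first isolate $\ovl{\sigma}_i\,\co a$ and multiply by $\co d$ at the end.
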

\begin{proof}
Take any $d\in {\cal O}^\times$; then $vd=0$ and hence, $\co d=dv$.
We have that
\begin{eqnarray*}
(\co d)\,\ovl{\sigma}_i \co a & = & (dv)\,\ovl{\sigma}_i
((m_{-va}a)v)\>=\>(dv)\,(\sigma_i m_{-va}a)v\\
 & = & (dv)\,(m_{-va}\sigma_i a)v\>=\>(m_{-va} d \sigma_i a)v\;.
\end{eqnarray*}
Here, the second equality holds by equation (\ref{s_iav}), and the third
equality holds by (\ref{IOc}). Now we distinguish two cases. Suppose
first that $v\sigma_i a=va$. Then
\[(m_{-va} d \sigma_i a)v\>=\> (m_{-v\sigma_i a} d \sigma_i a)v \>=\>
(m_{-vd\sigma_i a}d \sigma_i a)v\>=\> \co(d \sigma_i a)\;.\]
Now suppose that $v\sigma_ia>va$. Then $vm_{-va} d \sigma_i a>0$ and hence,
$(m_{-va} d \sigma_i a)v=0$. This proves that (\ref{IO}) holds.
\end{proof}
\n
Property (\ref{IO}) can be expressed by saying that unit multiples of
the additive operators commute with the coefficient map.

\begin{proposition}                         \label{immopwcm}
Let the assumptions on $f$, $b$, $d_i$ and $s$ be as in
Proposition~\ref{psi}. Assume that the additive operators $\sigma_i$
satisfy {\rm (V$\geq$)}, that $\co$ is a weak coefficient map and that
(\ref{IO}) holds.
Suppose further that the additive operator
\[\sum_{i=0}^{n} c_i \ovl{\sigma}_i \mbox{ \ \ with \ \ } c_i=
\left\{
\begin{array}{cl}
\co s^{-1}d_i & \mbox{if \ } vd_i=vs\\
0 & \mbox{if \ } vd_i>vs
\end{array}\right.\]
on the residue field $Kv$ is surjective.
Then the map
\[b+s{\cal M}\ni x\mapsto f^\sigma(x) \in f^\sigma(b)+s^2{\cal M}\]
is immediate.
\end{proposition}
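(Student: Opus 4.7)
The plan is to invoke Proposition~\ref{psi}: it suffices to show that for every $a'\in s^2{\cal M}$ there is some $a\in s{\cal M}$ with $va=va'-vs$ and $v(a'-\phi a)>va'$, where $\phi=\sum_{i=0}^n d_i\sigma_i$. The case $a'=0$ is handled by $a=0$, so assume $a'\ne 0$ and set $\alpha:=va'$ and $\beta:=\alpha-vs$. Since $a'\in s^2{\cal M}$ we have $\beta>vs>0$.

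To produce $a$, note that $s^{-1}a'\ne 0$ so $\co(s^{-1}a')\in Kv$ is non-zero by (WCM0). By the surjectivity hypothesis there is $\ovl{a}_0\in Kv\setminus\{0\}$ with $\sum_{i=0}^n c_i\ovl{\sigma}_i\ovl{a}_0=\co(s^{-1}a')$. Applying (WCM4) with $\gamma=\beta$ yields $a\in K$ such that $va=\beta$ and $\co a=\ovl{a}_0$; in particular $a\in s{\cal M}$ and the required value condition $va=va'-vs$ holds.

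It remains to verify $v(a'-\phi a)>\alpha$, equivalently $v(s^{-1}a'-s^{-1}\phi(a))>\beta$. Decompose $s^{-1}\phi(a)=\sum_i s^{-1}d_i\sigma_i a$: by (V$\geq$) and $vs=\min_i vd_i$, each summand has value $\ge\beta$, and a summand has value exactly $\beta$ iff $vd_i=vs$ and $v\sigma_i a=va$. For such an index, $s^{-1}d_i\in {\cal O}^\times$ and property~(\ref{IO}) gives $\co(s^{-1}d_i\sigma_i a)=c_i\ovl{\sigma}_i\ovl{a}_0$. For the remaining indices, the corresponding term $c_i\ovl{\sigma}_i\ovl{a}_0$ already vanishes: if $vd_i>vs$ then $c_i=0$ by definition, and if $vd_i=vs$ but $v\sigma_i a>va$ then the second clause of (\ref{IO}) yields $c_i\ovl{\sigma}_i\ovl{a}_0=0$. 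Applying (WCM2) to the value-$\beta$ summands, which after rescaling by $m_{-\beta}$ all have value $0$, gives $\co(s^{-1}\phi(a))=\sum_i c_i\ovl{\sigma}_i\ovl{a}_0=\co(s^{-1}a')$, and also $v(s^{-1}\phi(a))=\beta=v(s^{-1}a')$. Now (WCM3) forces $v(s^{-1}\phi(a)-s^{-1}a')>\beta$; multiplying by $s$ delivers the desired inequality. The bookkeeping heart of the argument is the matching of two zero/non-zero regimes — one governed by the values $vd_i$ and $v\sigma_i a$, the other by the definition of the $c_i$ together with (\ref{IO}); once aligned, Proposition~\ref{psi} immediately yields that $\phi$ and the map $b+s{\cal M}\ni x\mapsto f^\sigma(x)\in f^\sigma(b)+s^2{\cal M}$ are immediate.
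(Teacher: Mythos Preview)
Your proof is correct and follows essentially the same route as the paper's: verify the hypothesis of Proposition~\ref{psi} by lifting a residue-field solution via (WCM4) and then using (\ref{IO}) and (WCM2)/(WCM3) to control $v(a'-\phi a)$. One small imprecision: (WCM2) applied to the value-$\beta$ summands yields $\co$ of their \emph{partial} sum, not $\co(s^{-1}\phi(a))$; the paper makes this explicit by splitting the index set into $I=\{i:vd_i=vs\text{ and }\ovl{\sigma}_i\co a\ne 0\}$ and its complement, applying (WCM3) to $\sum_{i\in I}s^{-1}d_i\sigma_i a$, and then separately bounding $v\sum_{i\notin I}d_i\sigma_i a>va'$ --- you should route the argument the same way rather than claiming $\co(s^{-1}\phi(a))=\co(s^{-1}a')$ directly.
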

\begin{proof}
We define $\phi$ as in Proposition~\ref{psi}. Now we just have to show
that $\phi$ satisfies the assumptions of that proposition. So take any
$a'\in s^2{\cal M}$, $a'\ne 0$. Since $\sum_{i=0}^{n}c_i\ovl{\sigma}_i$
is surjective on $Kv$ by assumption, there is some $\ovl{a}
\in Kv$ such that $\sum_{i=0}^{n} c_i \ovl{\sigma}_i\, \ovl{a}=\co
s^{-1}a'$. Property (WCM4) of the coefficient map allows us to choose
$a\in K$ such that $\co a=\ovl{a}$ and $va=va'-vs$. Thus, $0\ne
a\in s{\cal M}$. Set $I=\{i\mid 0\leq i\leq n\mbox{ with } vd_i=vs
\mbox{ and } \ovl{\sigma}_i\,\co a\ne 0\}$. Then by the definition of
the $c_i\,$,
\begin{eqnarray*}
\co s^{-1}a' & = & \sum_{i=1}^{n} c_i \ovl{\sigma}_i\,\ovl{a}\>=\>
\sum_{i\in I} \co (s^{-1}d_i)\,\ovl{\sigma}_i\,\co a\\
 & = &
\sum_{i\in I} \co (s^{-1}d_i \sigma_i a)\>=\>
\co (\sum_{i\in I} s^{-1}d_i \sigma_i a)\;,
\end{eqnarray*}
where the third equality holds by (\ref{IO}). The last equality follows
from (WCM2) since the left hand side is non-zero, being equal to $\co
s^{-1} a'$, and because for each $i\in I$, $\ovl{\sigma}_i\,\co a\ne 0$
implies $v\sigma_ia=va$ by (\ref{IO}), and $vd_i=vs$ then yields
$vs^{-1}d_i \sigma_i a=va$ so that all values are equal. By (WCM3), it
follows that
\[v\left(s^{-1}a'\,-\,\sum_{i\in I} s^{-1}d_i \sigma_i a\right)
\>>\> vs^{-1}a'\;.\]
Consequently,
\[v\left(a'\,-\,\sum_{i\in I} d_i \sigma_i a\right)
\>=\>v\left(s^{-1}a'\,-\,\sum_{i\in I} s^{-1}d_i \sigma_i a\right)+vs
\>>\> vs^{-1}a'+vs\>=\>va'\;.\]
On the other hand, take $i\in I':=\{0,\ldots,n\}\setminus I$. In the
case of $vd_i>vs$, since $v\sigma_i a\geq va=va'-vs$, we find that
$vd_i\sigma_i a \geq vd_i+va'-vs>va'$. Observe that $a\ne 0$ implies
$d\sigma_i a\ne 0$, and this implies $\co d\sigma_i a\ne 0$. Hence in
the case of $\ovl{\sigma}_i\,\co a=0$, (\ref{IO}) shows that $v\sigma_i
a>va$ and we obtain that $vd_i\sigma_i a > vd_i+va = vd_i+va'-vs\geq
va'$. Therefore,
\[v\,\sum_{i\in I'} d_i \sigma_i a \>\geq\>
\min_{i\in I'} vd_i \sigma_i a \>>\> va'\;.\]
This gives us
\[v(a'-\phi a) \>=\> v\left(a'\,-\,\sum_{i=0}^n d_i \sigma_i a\right)
\>\geq\> \min\left\{v\left(a'\,-\,\sum_{i\in I} d_i \sigma_i a\right),
v\,\sum_{i\in I'} d_i \sigma_i a\right\}\>>\>va'\;.\]
So the conditions of Proposition~\ref{psi} are satisfied and we are
done.
\end{proof}

In the same way as for the original Hensel's Lemma (except for the
uniqueness assertion), Proposition~\ref{immopwcm} yields the following
generalized Hensel's Lemma in the present setting:
\begin{theorem}                             \label{genHLsao}
In addition to the assumptions of Proposition~\ref{immopwcm}, suppose
that $(K,v)$ is spherically complete and that
\[vf^\sigma(b)>2vs\;.\]
Then there is an element $a\in K$ such that $f^\sigma(a)=0$ and
$v(a-b)>vs$.
\end{theorem}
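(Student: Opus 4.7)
The plan is to mirror the proof of Theorem~\ref{HL}, with the immediacy input coming from Proposition~\ref{immopwcm} instead of Theorem~\ref{MT2}. The key point is that once we know $f^\sigma$ induces an immediate map from a spherically complete ball into a translate of $s^2{\cal M}$, the hypothesis $vf^\sigma(b)>2vs$ simply says that $0$ lies in that translate, so surjectivity of the immediate map produces the desired zero.

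First I would invoke Proposition~\ref{immopwcm} (whose hypotheses are assumed to hold) to conclude that the map
\[ b+s{\cal M}\;\ni\;x\;\longmapsto\; f^\sigma(x)\;\in\; f^\sigma(b)+s^2{\cal M} \]
is immediate. Next, since $(K,v)$ is spherically complete, the underlying ultrametric space is spherically complete, and hence so is its ball $b+s{\cal M}$ (a standard fact recalled in Section~\ref{sectuv}). Theorem~\ref{MT} then applies and tells us that the map above is surjective onto $f^\sigma(b)+s^2{\cal M}$.

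Now the arithmetic hypothesis kicks in. From $vf^\sigma(b)>2vs=vs^2$ we conclude $f^\sigma(b)\in s^2{\cal M}$, hence $f^\sigma(b)+s^2{\cal M}=s^2{\cal M}$, which in particular contains $0$. By surjectivity there exists $a\in b+s{\cal M}$ with $f^\sigma(a)=0$, and $a\in b+s{\cal M}$ is exactly the statement $v(a-b)>vs$.

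There is no real obstacle: all the work has been done in Proposition~\ref{immopwcm}, and the final step is just the same translation trick used in Theorem~\ref{HL}. The only things to be mildly careful about are that we apply Theorem~\ref{MT} (not merely Theorem~\ref{MTattr}) so as to invoke surjectivity onto the entire target ball, and that we correctly read off $v(a-b)>vs$ from membership in $b+s{\cal M}$. Note also that, unlike in Theorem~\ref{HL}, we do not claim uniqueness of $a$, which is consistent with the statement (the pseudo-companion $\phi$ need not be injective, so neither need $f^\sigma$ on $b+s{\cal M}$).
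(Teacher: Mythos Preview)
Your proof is correct and follows exactly the route the paper intends: it explicitly states that Theorem~\ref{genHLsao} is obtained ``in the same way as for the original Hensel's Lemma (except for the uniqueness assertion),'' i.e., by combining the immediacy from Proposition~\ref{immopwcm} with Theorem~\ref{MT} and the observation that $vf^\sigma(b)>2vs$ forces $0\in f^\sigma(b)+s^2{\cal M}$. Your remark about the absence of a uniqueness claim is also on point.
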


%
%
\subsection{The case of a dominant operator}        \label{sectdom}
In this section, we consider the case where one of the additive
operators, say $\sigma_n$ (without loss of generality), is dominant on
some ball $B$ around $0$, that is,
\begin{equation}                            \label{domin}
\forall \,a\in B:\;\;\; v\sigma_n a \,< \min_{0\leq j\leq n-1}
v\sigma_j a \mbox{ \ \ or \ \ } \sigma_0 a=\sigma_1 a=\ldots=\sigma_n
a=0 \;.
\end{equation}
We will not assume that (V$\geq$) holds, so we cannot apply
Proposition~\ref{psi}. Instead, we prove:

\begin{proposition}                         \label{immopdom}
Let $\sigma_i:{\cal O}\rightarrow {\cal O}$, $0\leq i\leq n$, be
additive operators satisfying condition (\ref{domin}). With $f$, $b$ and
$d_i$ as in Proposition~\ref{psi}, assume that
\begin{equation}                            \label{dnmindi}
vd_n\>=\>\min_{0\leq i\leq n} vd_i\;.
\end{equation}
Suppose further that for some balls $B,B'\subseteq d_n {\cal M}$ around
$0$, the map $\sigma_n: B\rightarrow B'$ is immediate. Then the map
\begin{equation}                            \label{embdom}
b+B\ni x\mapsto f^\sigma x \in f^\sigma b+d_n B'
\end{equation}
is immediate. If $\sigma_n$ is injective on $B$, then
(\ref{embdom}) is injective, too.
\end{proposition}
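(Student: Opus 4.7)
The plan is to apply Proposition~\ref{BCb} to the map $f^\sigma : b+B \to f^\sigma b + d_nB'$, using as pseudo-companion candidate $\phi : B \to d_nB'$ defined by $\phi(x) = d_n\sigma_n(x)$; the immediacy of $\sigma_n$ on $B$ will drive the entire argument, with dominance playing the role that (V$\geq$) played in Proposition~\ref{psi}.

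First I would exploit dominance to control the tuple $(\sigma_0 a, \ldots, \sigma_n a)$ for $a\in B$. Since $\sigma_n(B)\subseteq B'\subseteq d_n{\cal M}$, we have $v\sigma_n a > vd_n$, so by (\ref{domin}) either $v\sigma_j a > v\sigma_n a > vd_n$ for every $j<n$ or all the $\sigma_i a$ vanish. In either case, for $y\in b+B$ the tuple $(\sigma_0 y,\ldots,\sigma_n y)$ lies in $(\sigma_0 b,\ldots,\sigma_n b)+d_n{\cal M}^{n+1}$, so Lemma~\ref{lvf-psi} applies with $s=d_n$. Taking distinct $y,z\in b+B$ (note $y-z\in B$ since $B$ is a subgroup), it yields
\[ v\left(f^\sigma y - f^\sigma z - \sum_{i=0}^{n} d_i\sigma_i(y-z)\right) \>>\> vd_n + v\sigma_n(y-z), \]
using that $\min_i v\sigma_i(y-z) = v\sigma_n(y-z)$ by dominance applied to $y-z$. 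Since $vd_j\geq vd_n$ and $v\sigma_j(y-z)>v\sigma_n(y-z)$ for $j<n$, we also have $vd_j\sigma_j(y-z) > v\phi(y-z)$. Combining these estimates gives
\[ v(f^\sigma y - f^\sigma z - \phi(y-z)) \>>\> v\phi(y-z) \>=\> v(f^\sigma y - f^\sigma z), \]
and in particular $f^\sigma(b+B)\subseteq f^\sigma b + d_nB'$.

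Next I would verify the hypotheses of Proposition~\ref{BCb}. Given $a'\in d_nB'\setminus\{0\}$, immediacy of $\sigma_n: B\to B'$ via Proposition~\ref{ATC} yields some $a\in B$ satisfying $v(a'/d_n - \sigma_n a) > v(a'/d_n)$ together with the regularity condition (IH2) for $\sigma_n$. Multiplying by $d_n$ gives $v(a' - \phi a) > va' = v\phi a$, which is (\ref{(BC1)}); and (IH2) for $\sigma_n$ shifts by $vd_n$ to give $a\in\Reg(\phi)$. For (\ref{(BC2)}), whenever $v(y-z)\geq va$ we have $v\sigma_n(y-z)\geq v\sigma_n a$ by (IH2), so the estimate above yields $v(f^\sigma y - f^\sigma z - \phi(y-z)) > vd_n + v\sigma_n(y-z) \geq v\phi a$, as required. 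Proposition~\ref{BCb} then concludes that $f^\sigma$ is immediate on $b+B$.

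For the injectivity clause, if $\sigma_n|_B$ is injective and $y\neq z$ in $b+B$, then $y-z\in B\setminus\{0\}$, so $\sigma_n(y-z)\neq 0$ and the main estimate gives $v(f^\sigma y-f^\sigma z)=v\phi(y-z)<\infty$, whence $f^\sigma y\neq f^\sigma z$. The principal obstacle is the absence of (V$\geq$): it is recovered on $B$ through the interaction of dominance with the inclusion $B'\subseteq d_n{\cal M}$, which both legitimizes the application of Lemma~\ref{lvf-psi} to the tuples $(\sigma_i y)_i$ and $(\sigma_i z)_i$ and ensures that the terms $d_j\sigma_j(y-z)$ for $j<n$ contribute only higher-order corrections to $d_n\sigma_n(y-z)$.
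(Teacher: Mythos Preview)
Your proof is correct and follows essentially the same route as the paper: set $\phi=d_n\sigma_n$, use dominance together with $B'\subseteq d_n{\cal M}$ to place the tuples $(\sigma_i y)_i$ and $(\sigma_i z)_i$ in $(\sigma_i b)_i+d_n{\cal M}^{n+1}$ so that Lemma~\ref{lvf-psi} applies, strip off the lower-order terms $d_j\sigma_j(y-z)$ via dominance and $vd_j\geq vd_n$, and then feed the resulting estimate into Proposition~\ref{BCb} using Proposition~\ref{ATC} for the immediacy of $\sigma_n$. The injectivity argument is also the same.
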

\begin{proof}
We set $s=d_n\,$. Take distinct elements $y,z\in b+B\subseteq b+s{\cal M}$
and set $b_i:= \sigma_i b\in {\cal O}$, $y_i:=\sigma_i y\in {\cal O}$,
$z_i:=\sigma_i z\in {\cal O}$. It follows from (\ref{domin}) that
$v(y_i-b_i)= v\sigma_i (y-b)\geq v\sigma_n(y-b)$, and our assumption on
$\sigma_n$ yields that $y_i-b_i\in B'$ for $0\leq i\leq n$. We obtain
$y_i\in b_i+B'\subseteq b_i +s{\cal M}$ and similarly, $z_i\in b_i
+s{\cal M}$. Thus we can apply Lemma~\ref{lvf-psi} to obtain
that
\[f^\sigma (b+B) \>\subseteq\> f^\sigma b +d_n B'\;.\]
We shall apply Proposition~\ref{BCb} in order to show that $g: b+B
\rightarrow f^\sigma b +d_n B'$ is immediate. We set
$\phi:= d_n\sigma_n\,$. Pick any $a'\in d_n B'$, $a'\ne 0$. Since
$\sigma_n: B\rightarrow B'$ is immediate, Proposition~\ref{ATC} shows
that there is some $a\in B$ such that $a\ne 0$ and
\begin{equation}                            \label{a1}
v\left(\frac{a'}{d_n}-\sigma_n a\right)\> >\>v\,\frac{a'}{d_n}
\end{equation}
and
\begin{equation}                            \label{a2}
va\leq vb\;\Longrightarrow\; v\sigma_n a\leq v\sigma_n b\;.
\end{equation}
We obtain that $v(a'-\phi a)>va'$ and $va\leq vb\rightarrow v\phi a\leq
v\phi b$, which shows that (\ref{(BC1)}) of Proposition~\ref{BCb} is
satisfied. Now take distinct $y,z\in b+B$. As in the proof of
Proposition~\ref{psi}, we can apply Proposition~\ref{lvf-psi}
to obtain that
\[v\left(f^\sigma y-f^\sigma z\,-\,\sum_{i=0}^n d_i\sigma_i (y-z)\right)
\>>\>vs+\min_i v(\sigma_i y-\sigma_i z)\>=\>vs+\min_i v\sigma_i(y-z)\;.\]
By (\ref{domin}),
\[vs+\min_i v\sigma_i (y-z) \>=\>vs+v\sigma_n (y-z)\>=\>
vd_n\sigma_n (y-z)\;.\]
Again by (\ref{domin}),
\[v\sum_{i=0}^{n-1} d_i\sigma_i (y-z)\>>\>vd_n\sigma_n (y-z)\;,\]
and we conclude that
\begin{equation}                            \label{gy-gz}
v(f^\sigma y-f^\sigma z\,-\,d_n\sigma_n(y-z))\>\geq\>
\min\{v(f^\sigma y-f^\sigma z-\sum_{i=0}^n
d_i\sigma_i (y-z))\,,\,\sum_{i=0}^{n-1} d_i\sigma_i (y-z)\}
\>>\>vd_n\sigma_n (y-z)\;.
\end{equation}
If $v(y-z)\geq va$, then by (\ref{a2}), $vd_n\sigma_n (y-z)\>\geq\>
vd_n\sigma_n a$ and thus, (\ref{gy-gz}) yields
\[v(f^\sigma y-f^\sigma z\,-\,\phi(y-z))\>=\>v(f^\sigma y-f^\sigma z
\,-\,d_n\sigma_n(y-z))\>>\>
vd_n\sigma_n a\>=\>v\phi a\;.\]
Since $\phi 0=0$ as $\phi$ is additive, this shows that (\ref{(BC2)}) is
satisfied for $f^\sigma $ in the place of $f$. Now Proposition~\ref{BCb}
proves that $f^\sigma $ is immediate.

If $\sigma_n$ is injective on $B$, then $y\ne z$ implies $vd_n\sigma_n
(y-z) <\infty$, whence $f^\sigma y\ne f^\sigma z$ by (\ref{gy-gz}).
Hence in this case, (\ref{embdom}) is injective.
\end{proof}

Proposition~\ref{immopdom} yields the following Hensel's
Lemma for the case of a dominant operator:

\begin{theorem}                             \label{genHLdom}
In addition to the assumptions of Proposition~\ref{immopdom}, suppose
that $(K,v)$ is spherically complete and that for some $e\in B$,
\begin{equation}                            \label{genHLdomc}
vf^\sigma b\geq vd_n+v\sigma_n e\;.
\end{equation}
Then there is an element $a\in b+B$ such that $f^\sigma a=0$ and
$v\sigma_n(a-b)\geq v\sigma_n e$. If $\sigma_n$ is injective on $B$,
then $a$ is unique.
\end{theorem}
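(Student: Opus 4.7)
The strategy is to obtain $a$ from the surjectivity conclusion of Theorem~\ref{MT} applied to the immediate map supplied by Proposition~\ref{immopdom}, and then to read off the value estimate from the pseudo-linear behaviour of $f^\sigma$ on $b+B$ that Lemma~\ref{lvf-psi} provides.

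First, by Proposition~\ref{immopdom}, the map $f^\sigma:\; b+B\rightarrow f^\sigma b+d_n B'$ is immediate. Since $B$ is a ball in the spherically complete field $(K,v)$, the subspace $b+B$ is itself spherically complete, and Theorem~\ref{MT} then yields surjectivity of $f^\sigma$ onto $f^\sigma b+d_n B'$. The element $a$ will be a preimage of $0$, provided we can place $0$ inside this target ball.

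Next I would check that $0\in f^\sigma b+d_n B'$. Note that $d_n\ne 0$ because $vd_n=vs<\infty$ under the standing hypothesis. The desired containment is equivalent to $-f^\sigma b/d_n\in B'$, and from the assumption (\ref{genHLdomc}),
\[
v(-f^\sigma b/d_n)\>=\>vf^\sigma b-vd_n\>\geq\>v\sigma_n e\;.
\]
Since $e\in B$ and $\sigma_n$ sends $B$ into $B'$, we have $\sigma_n e\in B'$; as $B'$ is a ball around $0$, every element whose value is at least $v\sigma_n e$ lies in $B'$. Hence $-f^\sigma b/d_n\in B'$, so $0\in f^\sigma b+d_n B'$, and surjectivity produces $a\in b+B$ with $f^\sigma a=0$.

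For the value inequality (assuming $a\ne b$, as otherwise $v\sigma_n(a-b)=\infty$ is trivially fine) and uniqueness, I would apply Lemma~\ref{lvf-psi} to the tuples $(\sigma_0 a,\ldots,\sigma_n a)$ and $(\sigma_0 b,\ldots,\sigma_n b)$, which both lie in $(\sigma_0 b,\ldots,\sigma_n b)+d_n{\cal M}^{n+1}$ thanks to (\ref{domin}) together with $\sigma_n(a-b)\in B'\subseteq d_n{\cal M}$. Using (\ref{domin}) and (\ref{dnmindi}) to isolate the dominant summand $d_n\sigma_n(a-b)$ among the first-order terms, one obtains
\[
v\bigl(f^\sigma a-f^\sigma b-d_n\sigma_n(a-b)\bigr)\>>\>vd_n\sigma_n(a-b)\;,
\]
and hence, by the ultrametric triangle law, $v(f^\sigma a-f^\sigma b)=vd_n+v\sigma_n(a-b)$. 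Substituting $f^\sigma a=0$ gives $vf^\sigma b=vd_n+v\sigma_n(a-b)$, whence $v\sigma_n(a-b)=vf^\sigma b-vd_n\geq v\sigma_n e$ by (\ref{genHLdomc}). Finally, uniqueness under injectivity of $\sigma_n$ on $B$ is already furnished by the last sentence of Proposition~\ref{immopdom}, which states that (\ref{embdom}) is then injective, so $a$ is the unique preimage of $0$.

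The main obstacle I anticipate is the verification that $0\in f^\sigma b+d_n B'$: one must convert the hypothesis (\ref{genHLdomc}) into actual membership in $B'$, using both the non-obvious witness $\sigma_n e\in B'$ and the upward-closure-in-value property of balls around $0$. The rest is a direct chaining of results already established in the paper.
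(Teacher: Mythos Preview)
Your proposal is correct and follows essentially the same route as the paper. The paper's proof is simply terser: it leaves the verification that $0\in f^\sigma b+d_n B'$ implicit and, for the value estimate, cites inequality (\ref{gy-gz}) already established inside the proof of Proposition~\ref{immopdom} rather than re-deriving it via Lemma~\ref{lvf-psi} as you do.
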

\begin{proof}
It just remains to show that $v\sigma_n(a-b)\geq v\sigma_n e$. By
(\ref{genHLdomc}),
\[vd_n+v\sigma_n e \>\leq\> vf^\sigma b\>=\>v(f^\sigma b\,-\,f^\sigma a)
\>=\> vd_n+v\sigma_n(b-a)\;,\]
where the last equality follows from (\ref{gy-gz}) by the ultrametric
triangle law. Hence, $v\sigma_n(a-b)=v\sigma_n(b-a)\geq v\sigma_n e$.
\end{proof}

In Section~\ref{sectde} we will deduce from this theorem a Hensel's
Lemma for Rosenlicht valued differential fields. But this Hensel's Lemma
is not strong enough. To improve it, we consider also the values of the
higher derivatives of~$f$. So we need to modify our approach, which we
will do in the next section.

%
%
\subsection{Rosenlicht systems of operators}     \label{sectRso}
We will call $\sigma_0,\sigma_1,\ldots,\sigma_n$ a \bfind{Rosenlicht
system of operators} if each $\sigma_i:{\cal O}\rightarrow {\cal O}$ is
additive and there exist elements $e_i\in {\cal O}$ such that
\begin{equation}                            \label{vei}
e_n\>=\>1 \mbox{ \ \ and \ \ } ve_0\>\geq\>ve_1\>\geq\ldots\geq\>
ve_n\>=\>0\;,
\end{equation}
and for all $i<n$,
\begin{equation}                            \label{Rsys}
ve_i+v\sigma_i a\>>\>v\sigma_n a\;\;\;\mbox{ for all } a\in {\cal M},
\> a\ne 0\;.
\end{equation}
The latter implicitly includes the condition that $\sigma_n$ is
injective on ${\cal M}$.

\parm
The following is an adaptation of Lemma~\ref{lvf-psi}.
\begin{lemma}                               \label{Rlvf-psi}
Take $f\in {\cal O}[X_0,X_1,\ldots,X_n]$ and $b\in
{\cal O}^{n+1}$ such that
\[d_n\>=\>\frac{\partial f}{\partial X_n}(b)\>\ne\>0\]
and for all $\>\ul{i}\in I=\{0,\ldots,\deg f\}^{n+1}\setminus
\{(0,\ldots,0)\}$,
\begin{equation}                            \label{vfi}
vf^{[\,\ul{i}\,]} (b)\>\geq\>vd_n+ve_k\;\;\;\mbox{ if \ }
k= \min\{j\mid i_j \ne 0\}
\end{equation}
where the elements $e_i\in K$ satisfy (\ref{vei}).
Take $y=(y_0,\ldots,y_n)$ and $z=(z_0,\ldots,z_n)$ in
$b+{\cal M}^{n+1}$ such that
\begin{equation}
ve_i+v(y_i-z_i)\>>\>v(y_n-z_n)\;\;\;\mbox{ for \ } 0\leq i<n\;.
\end{equation}
Then the following holds:
\begin{equation}                            \label{Rvf-psi}
v(f(y)-f(z)\>-\>d_n(y_n-z_n))\> >\> vd_n(y_n-z_n)\>=\>
v(f(y)-f(z))\;.
\end{equation}
\end{lemma}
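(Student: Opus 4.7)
The approach is to apply the multi-variable Taylor expansion (\ref{Tayl1m}) to $f(y)-f(z)$, isolate the ``main'' term $d_n(y_n-z_n)$ (coming from $\ul{i}=(0,\ldots,0,1)$), and show that every other term in the expansion has value strictly greater than $vd_n+v(y_n-z_n)$. The equality $v(f(y)-f(z))=vd_n(y_n-z_n)$ then follows from the ultrametric triangle law applied to the first strict inequality.

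Write $\delta=y-b$, $\varepsilon=z-b$, both in ${\cal M}^{n+1}$. By (\ref{Tayl1m}),
\[f(y)-f(z)\>=\>d_n(y_n-z_n)\>+\>\sum_{\ul{i}\in I,\,\ul{i}\ne(0,\ldots,0,1)} f^{[\,\ul{i}\,]}(b)\,(\delta^{\ul{i}}-\varepsilon^{\ul{i}})\;,\]
so it suffices to bound each remaining summand. Let $k=\min\{j\mid i_j\ne 0\}$. By (\ref{vfi}), $vf^{[\,\ul{i}\,]}(b)\geq vd_n+ve_k$, so the task reduces to showing
\[ve_k+v(\delta^{\ul{i}}-\varepsilon^{\ul{i}})\> >\> v(y_n-z_n).\]

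I would split this into two cases. Case A ($|\ul{i}|=1$, so $i_k=1$ with $k<n$): here $\delta^{\ul{i}}-\varepsilon^{\ul{i}}=y_k-z_k$, and the needed inequality is precisely the hypothesis $ve_k+v(y_k-z_k)>v(y_n-z_n)$. Case B ($|\ul{i}|\geq 2$): expand telescopically
\[\delta^{\ul{i}}-\varepsilon^{\ul{i}}\>=\>\sum_{j:\,i_j\ne 0}\prod_{l<j}\delta_l^{i_l}\cdot(\delta_j^{i_j}-\varepsilon_j^{i_j})\cdot\prod_{l>j}\varepsilon_l^{i_l}\;.\]
For each nonzero $j$, since $|\ul{i}|\geq 2$ there is always an ``extra'' factor of positive value in the term (either a $\delta_l$ or $\varepsilon_l$ with $l\ne j$, or the cofactor of $\delta_j-\varepsilon_j$ when $i_j\geq 2$). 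So the $j$-th term has value strictly greater than $v(y_j-z_j)$.

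The main obstacle is the bookkeeping in Case B to get a bound involving $ve_k$. I would use (\ref{vei}): since $k\leq j\leq n$ and the $ve_l$ are decreasing, $ve_j\leq ve_k$. For $j<n$ with $i_j\ne 0$, the hypothesis gives $v(y_j-z_j)>v(y_n-z_n)-ve_j\geq v(y_n-z_n)-ve_k$, so the $j$-th term has value strictly greater than $v(y_n-z_n)-ve_k$. For $j=n$ with $i_n\ne 0$, since $|\ul{i}|\geq 2$ forces a strictly positive extra contribution, the $j$-th term's value is strictly greater than $v(y_n-z_n)=v(y_n-z_n)-ve_n\geq v(y_n-z_n)-ve_k$. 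Taking the minimum over $j$ yields $v(\delta^{\ul{i}}-\varepsilon^{\ul{i}})>v(y_n-z_n)-ve_k$, which is exactly what is needed. Putting Cases A and B together proves $v(f(y)-f(z)-d_n(y_n-z_n))>vd_n(y_n-z_n)$, and the ultrametric triangle law delivers the equality $v(f(y)-f(z))=vd_n(y_n-z_n)$.
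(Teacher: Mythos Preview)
Your proof is correct and follows essentially the same route as the paper: Taylor expand, isolate the term $d_n(y_n-z_n)$, and show every remaining term $f^{[\,\ul{i}\,]}(b)(\delta^{\ul{i}}-\varepsilon^{\ul{i}})$ has value $>vd_n+v(y_n-z_n)$ by combining (\ref{vfi}) with a bound $ve_k+v(\delta^{\ul{i}}-\varepsilon^{\ul{i}})>v(y_n-z_n)$, then invoke the ultrametric triangle law for the equality. The only difference is organizational: the paper bounds $\delta^{\ul{i}}-\varepsilon^{\ul{i}}$ by induction on $|\ul{i}|$, peeling off one factor at the minimal index $k$ via $\delta^{\ul{i}}-\varepsilon^{\ul{i}}=(\delta_k-\varepsilon_k)\delta^{\ul{i}'}+\varepsilon_k(\delta^{\ul{i}'}-\varepsilon^{\ul{i}'})$, whereas you use a one-shot telescoping sum over all $j$ with $i_j\ne 0$ and bound each summand directly. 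Both use the monotonicity (\ref{vei}) and the ``extra $\mathcal{M}$-factor'' in exactly the same way; neither approach is materially shorter or more general than the other.
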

\begin{proof}
Write $y=b+\delta\in b+{\cal M}^{n+1}$ and $z=
b+ \varepsilon\in b+{\cal M}^{n+1}$, where $\delta
=(\delta_0,\ldots,\delta_n)$ and $\varepsilon=(\varepsilon_0,
\ldots,\varepsilon_n)$ satisfy
\begin{equation}                            \label{vede>}
ve_i+v(\delta_i-\varepsilon_i)\>=\>ve_i+v(y_i-z_i)\>>\>v(y_n-z_n)
\;\;\;\mbox{ for \ } 0\leq i<n\;.
\end{equation}
We note that $ve_n+v(\delta_n-\varepsilon_n)=v(y_n-z_n)$; so we have
\begin{equation}                            \label{vede}
ve_i+v(\delta_i-\varepsilon_i)\>\geq\>v(y_n-z_n)
\;\;\;\mbox{ for \ } 0\leq i\leq n\;.
\end{equation}
Take $\ul{i}\in I$, $|\ul{i}|\geq 2$, and let $\ul{i}'$
be the multi-index obtained from $\ul{i}$ by subtracting $1$ in the
$k$-th place, where $k= \min\{j\mid i_j \ne 0\}$.
Then
\[\delta^{\ul{i}}\,-\,\varepsilon^{\ul{i}}
\>=\>(\delta_k-\varepsilon_k)\delta^{\ul{i}'}
+\varepsilon_k(\delta^{\ul{i}'}-\varepsilon^{\ul{i}'})\;.\]
%
Suppose that we have already shown by induction on $|\ul{i}'|$ that
\[ve_{\ell}+v(\delta^{\ul{i}'}-\varepsilon^{\ul{i}'})\>\geq
\> v(y_n-z_n)\;\;\; \mbox{ for }\ell=\min\{j\mid i'_j \ne 0\}\;,\]
with the induction start for $|\ul{i}'|=1$ being covered by
(\ref{vede}). We have that $\ell\geq k$, hence $ve_k\geq ve_{\ell}$ by
(\ref{vei}); therefore, also $ve_k+v(\delta^{\ul{i}'}-
\varepsilon^{\ul{i}'}) \geq v(y_n-z_n)$. Since
$ve_k+v(\delta_k-\varepsilon_k)\geq v(y_n-z_n)$ by (\ref{vede}),
and since $\delta^{\ul{i}'}\,,\,\varepsilon_k\in {\cal M}$,
we then find
\begin{eqnarray}
ve_k+v(\delta^{\ul{i}}-\varepsilon^{\ul{i}}) &\geq&
\min\{ve_k+v(\delta_k-\varepsilon_k)+v\delta^{\ul{i}'}\,,\,
ve_k+v\varepsilon_k+v(\delta^{\ul{i}'}-\varepsilon^{\ul{i}'})
\}\nonumber\\
&>& v(y_n-z_n)\;.                       \label{yn-zn}
\end{eqnarray}
%
Take $\ul{i}\in I':= I\setminus \{(0,\ldots,0,1)\}$. Then because of
(\ref{vede>}), inequality (\ref{yn-zn}) also holds in the case of
$|\ul{i}|=1$. Hence by hypothesis (\ref{vfi}),
\[v(\delta^{\ul{i}}-\varepsilon^{\ul{i}})f^{[\,\ul{i}\,]}
(b)\>\geq\>vd_n+ve_k+v(\delta^{\ul{i}}\,-\,
\varepsilon^{\ul{i}}) \>>\>vd_n+v(y_n-z_n)\;.\]
Since
\[f(y)-f(z)\>=\>d_n (\delta_n-\varepsilon_n) \;+\;
\sum_{i\in I'} (\delta^{\ul{i}}\,-\,\varepsilon^{\ul{i}})
f^{[\,\ul{i}\,]} (b)\]
by (\ref{Tayl1m}), this yields
\begin{eqnarray*}
v(f(y)-f(z)\>-\>d_n(y_n-z_n))
&=& v(f(y)-f(z)\>-\>d_n(\delta_n-\varepsilon_n))\\
&=& v\sum_{i\in I'}(\delta^{\ul{i}}\,-\,\varepsilon^{\ul{i}})
f^{[\,\ul{i}\,]} (b) \>>\> vd_n+v(y_n-z_n)\;,
\end{eqnarray*}
which gives the inequality in (\ref{Rvf-psi}). The equality in
(\ref{Rvf-psi}) follows from the inequality by the ultrametric triangle
law.
\end{proof}

\begin{proposition}                         \label{immopRos}
Let $\sigma_0,\ldots,\sigma_n$ be a Rosenlicht system of operators
satisfying (\ref{vei}) and (\ref{Rsys}). Take $f$, $b$ and $d_n$ as in
Proposition~\ref{Rlvf-psi} such that (\ref{vfi}) holds.
Suppose further that for some balls $B,B'\subseteq {\cal M}$ around $0$,
the map $\sigma_n: B\rightarrow B'$ is immediate. Then
\begin{equation}                            \label{embRos}
b+B\ni x\mapsto f^\sigma x \in f^\sigma b +d_n B'
\end{equation}
is immediate and injective.
\end{proposition}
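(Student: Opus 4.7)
The plan is to follow the blueprint of Proposition~\ref{immopdom}, with Lemma~\ref{Rlvf-psi} playing the role of Lemma~\ref{lvf-psi}. I would take $\phi:=d_n\sigma_n$ as pseudo-companion of $f^\sigma$ on $b+B$, establish a key Taylor-type estimate $(\star)$ using the Rosenlicht lemma, and then invoke Proposition~\ref{BCb} to conclude immediacy. Injectivity will fall out of $(\star)$ because (\ref{Rsys}) forces $\sigma_n$ to be injective on $\cal M$.

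First I would take distinct $y,z\in b+B$, set $y_i:=\sigma_i y$, $z_i:=\sigma_i z$, $b_i:=\sigma_i b$, and verify the two hypotheses of Lemma~\ref{Rlvf-psi}. Applying (\ref{Rsys}) to $y-z\in{\cal M}$ yields exactly the strict inequality $ve_i+v(y_i-z_i)>v(y_n-z_n)$ required for $i<n$. For the containment $y,z\in b+{\cal M}^{n+1}$, one has $\sigma_n(y-b)\in B'\subseteq{\cal M}$ directly, and for $i<n$, (\ref{Rsys}) applied to $y-b$ gives $v\sigma_i(y-b)>v\sigma_n(y-b)-ve_i$; since $B$ can be taken inside the ball on which $v\sigma_n a>\max_{i<n}ve_i$ for every nonzero $a\in B$, a legitimate restriction because $\sigma_n:B\to B'\subseteq{\cal M}$ is immediate, each $\sigma_i(y-b)$ then lies in $\cal M$. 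With both hypotheses met, Lemma~\ref{Rlvf-psi} yields
\[v(f^\sigma y-f^\sigma z-d_n\sigma_n(y-z))\>>\>vd_n\sigma_n(y-z)\>=\>v(f^\sigma y-f^\sigma z)\,,\qquad (\star)\]
and setting $z=b$ gives $f^\sigma(b+B)\subseteq f^\sigma b+d_nB'$, confirming that (\ref{embRos}) is well defined.

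Next I would apply Proposition~\ref{BCb} to $f^\sigma:b+B\to f^\sigma b+d_nB'$ with $\phi=d_n\sigma_n$. Immediacy of $\sigma_n:B\to B'$ transfers to $\phi:B\to d_nB'$, so Proposition~\ref{ATC} provides, for each $a'\in d_nB'\setminus\{0\}$, an element $a\in\Reg(\phi)\cap B$ with $v(a'-\phi a)>va'=v\phi a$, giving (\ref{(BC1)}). For (\ref{(BC2)}), given $y,z\in b+B$ with $v(y-z)\geq va$, regularity of $a$ yields $v\sigma_n(y-z)\geq v\sigma_n a$, and combining this with $(\star)$ produces $v(f^\sigma y-f^\sigma z-\phi(y-z))>vd_n\sigma_n(y-z)\geq v\phi a$; since $\phi 0=0$, the diagonal case $y=z$ is automatic. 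Proposition~\ref{BCb} then delivers immediacy of $f^\sigma$ on $b+B$.

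Injectivity is immediate from $(\star)$: (\ref{Rsys}) applied to any $a\ne 0$ forces $\sigma_n$ to be injective on $\cal M$, so $v\sigma_n(y-z)<\infty$ for distinct $y,z\in b+B$, whence the equality in $(\star)$ shows $f^\sigma y\ne f^\sigma z$. The step I expect to require the most care is the hypothesis-check for Lemma~\ref{Rlvf-psi}, namely the verification that each $\sigma_i(y-b)$ lies in $\cal M$: the Rosenlicht inequality only controls $v\sigma_i(y-b)$ up to the non-negative shifts $ve_i$, so one must exploit the immediacy of $\sigma_n$ into $B'\subseteq{\cal M}$ to guarantee $v\sigma_n(y-b)>\max_{i<n}ve_i$, possibly by restricting to a small enough $B$. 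Once that is in place, the remainder is a faithful transcription of the Proposition~\ref{immopdom} argument.
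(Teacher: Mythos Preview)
Your plan mirrors the paper's proof almost exactly: set $\phi=d_n\sigma_n$, use Lemma~\ref{Rlvf-psi} to obtain the key estimate $(\star)$, then invoke Proposition~\ref{BCb} for immediacy, and read off injectivity from $(\star)$ together with the injectivity of $\sigma_n$ on $\cal M$ forced by~(\ref{Rsys}). The paper does precisely this, writing ``We modify the proof of Proposition~\ref{immopdom} as follows,'' verifying the inequality $ve_i+v(y_i-z_i)>v(y_n-z_n)$ from~(\ref{Rsys}), and then applying Lemma~\ref{Rlvf-psi} directly.

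The one place you depart from the paper is exactly the step you flag as delicate: the containment $(\sigma_0 y,\ldots,\sigma_n y)\in(\sigma_0 b,\ldots,\sigma_n b)+{\cal M}^{n+1}$ needed to invoke Lemma~\ref{Rlvf-psi}. The paper does not address this at all; it simply says ``Therefore, we can apply Lemma~\ref{Rlvf-psi}'' after the inequality check. Your instinct that something must be said is right, but your proposed remedy is not: you cannot ``take $B$ inside'' a smaller ball on which $v\sigma_n a>\max_{i<n}ve_i$, because $B$ and $B'$ are fixed in both the hypothesis and the conclusion, and proving immediacy only on a sub-ball does not deliver the stated conclusion on all of $b+B$. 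Immediacy of $\sigma_n$ plays no role in legitimising such a restriction; it only tells you the restricted map is still immediate, which is a different matter.

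Note that (\ref{Rsys}) alone gives only $v\sigma_i(y-b)>v\sigma_n(y-b)-ve_i$, which need not be positive when $ve_i>0$, so the containment is not a formal consequence of the stated hypotheses. In the paper's sole application (Section~\ref{sectde}) one has $\sigma_i=D^i$ with $D({\cal M})\subseteq{\cal M}$ by~(\ref{DM}), so $\sigma_i({\cal M})\subseteq{\cal M}$ and the containment is automatic. The clean way to close the gap is therefore to record $\sigma_i(B)\subseteq{\cal M}$ (or $\sigma_i({\cal M})\subseteq{\cal M}$) as a standing hypothesis on the Rosenlicht system, not to shrink $B$.
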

\begin{proof}
We modify the proof of Proposition~\ref{immopdom} as follows.
In order to apply Lemma~\ref{Rlvf-psi}, we set $y_i=\sigma_i y$
and $z_i=\sigma_i z$. From (\ref{Rsys}) it follows that
\begin{eqnarray*}
ve_i+v(y_i-z_i) & = & ve_i+v(\sigma_i y -\sigma_i z)\>=\>ve_i+v\sigma_i
(y-z)\\
 & > & v\sigma_n (y-z)\>=\>v(\sigma_n y -\sigma_n z)\>=\>v(y_n-z_n)
\end{eqnarray*}
for all $y,z\in b+B$ and $0\leq i< n$. Therefore, we can apply
Lemma~\ref{Rlvf-psi}, and (\ref{Rvf-psi}) shows that
\begin{equation}                            \label{vmsig}
v(f(y)-f(z))\>=\> vd_n\sigma_n(y-z)\>=\>
vd_n+v\sigma_n(y-z)
\end{equation}
for all $y,z\in b+B$. It follows that
\[f^\sigma (b+B)\>\subseteq\> f^\sigma b+d_n B'\;.\]
As in the proof of Proposition~\ref{immopdom} we use
Proposition~\ref{BCb} to show that $f^\sigma: b+B\rightarrow f^\sigma
b+d_n B'$ is immediate. The proof that (\ref{(BC1)}) and (\ref{(BC2)})
hold can be taken over literally, except that instead of deducing
(\ref{gy-gz}) we just apply inequality (\ref{Rvf-psi}) of
Lemma~\ref{Rlvf-psi} to obtain that
\[v(f^\sigma y-f^\sigma z\,-\,d_n\sigma_n (y-z))\>>\>
vd_n\sigma_n (y-z)\;.\]

Since $\sigma_n$ is injective on ${\cal M}$ (as a consequence of
condition (\ref{Rsys})), it follows as in the proof of
Proposition~\ref{immopdom} that $g$ is injective.
%
\end{proof}

Proposition~\ref{immopRos} yields the following generalized Hensel's
Lemma for the case of a Rosenlicht system of operators:

\begin{theorem}                             \label{genHLRos}
The assertion of Theorem~\ref{genHLdom} also holds under the
assumptions of Proposition~\ref{immopRos}.
\end{theorem}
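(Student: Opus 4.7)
The plan is to mirror the proof of Theorem~\ref{genHLdom} almost verbatim, simply substituting Proposition~\ref{immopRos} for Proposition~\ref{immopdom} and equation~(\ref{vmsig}) for equation~(\ref{gy-gz}). First I would invoke Proposition~\ref{immopRos} to obtain that the map
\[
b+B\;\ni\; x\;\mapsto\; f^\sigma x\;\in\; f^\sigma b +d_n B'
\]
is immediate and injective (the latter coming from the Rosenlicht condition~(\ref{Rsys}) forcing $\sigma_n$ to be injective on ${\cal M}$). Since $(K,v)$ is spherically complete, so is any ball in it, in particular $b+B$. Then Theorem~\ref{MT} applies and gives surjectivity of the map onto $f^\sigma b +d_n B'$.

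Next, I would show that $0\in f^\sigma b +d_n B'$. By hypothesis $vf^\sigma b\geq vd_n+v\sigma_n e$ with $e\in B$, and since $\sigma_n e\in B'$, the element $-f^\sigma b/d_n$ has valuation $\geq v\sigma_n e$; because $B'$ is a ball around $0$ (hence a subgroup closed upward in valuation), this forces $-f^\sigma b/d_n\in B'$. Thus $0$ lies in the target, and surjectivity yields $a\in b+B$ with $f^\sigma a=0$; injectivity of the map gives uniqueness.

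For the remaining bound on $v\sigma_n(a-b)$, I would apply equation~(\ref{vmsig}) from the proof of Proposition~\ref{immopRos} with $y=b$ and $z=a$, which delivers
\[
vf^\sigma b\;=\;v(f^\sigma b-f^\sigma a)\;=\;vd_n+v\sigma_n(b-a)\;.
\]
Combined with $vf^\sigma b\geq vd_n+v\sigma_n e$, this gives $v\sigma_n(a-b)\geq v\sigma_n e$, exactly as in the proof of Theorem~\ref{genHLdom}. There is really no obstacle to overcome here: all the structural work has been absorbed into Proposition~\ref{immopRos}, and equation~(\ref{vmsig}) plays the role in the Rosenlicht setting that equation~(\ref{gy-gz}) played in the dominant-operator setting, so the proof template transfers without modification.
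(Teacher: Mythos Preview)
Your proposal is correct and follows exactly the approach the paper intends: the paper gives no explicit proof of Theorem~\ref{genHLRos}, so its implicit argument is precisely to rerun the proof of Theorem~\ref{genHLdom} with Proposition~\ref{immopRos} in place of Proposition~\ref{immopdom} and equation~(\ref{vmsig}) in place of (\ref{gy-gz}), which is what you do. Your filling in of the detail that $0\in f^\sigma b+d_nB'$ (via $v(f^\sigma b/d_n)\geq v\sigma_n e$ and the upward-closedness of the ball $B'$) is the standard step the paper leaves tacit already in the proof of Theorem~\ref{genHLdom}.
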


%
%
\section{Immediate differentiation}
From now on our $\sigma_i$ will be the $i$-th iterates $D^i$ of an
additive operator $D$, with $D^0$ being the identity. For a polynomials
$f$ in $n+1$ variables, we set $f^D(X)=f(X,DX,D^2X,\ldots,D^nX)$.

%
%
\subsection{VD-fields}                       \label{sectdf}
We will call a valued field $(K,v)$ with an additive map $D: K
\rightarrow K$ a \bfind{VD-field} if the following
conditions are satisfied:
\sn
{\bf (VDF1)} \ $vDa\geq va$ for all $a\in K$,
\n
{\bf (VDF2)} \ $vK=\{va\mid a\in K\mbox{ with } vDa> va\}$,
\n
{\bf (VDF3)} \ there is $e\in {\cal O}$ such that
$D(ab)=aDb+bDa+e(Da)(Db)$ for all $a,b\in K$.
\sn
Together with (VDF1), the additivity of $D$ implies:
\n
{\bf (VDF4)} \ $D$ induces an additive map on $Kv$, again
denoted by $D$, such that $(Da)v=D(av)$,

\pars
\begin{proposition}                         \label{Dimmsurj}
Let $(K,D,v)$ be a VD-field. Then $D$ is immediate if
and only if $D$ is surjective on $Kv$.
\end{proposition}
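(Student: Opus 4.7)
The plan is to apply Proposition~\ref{ATC}, which translates immediateness of an additive group homomorphism into the conjunction of (IH1) and (IH2), and then to verify these conditions from the VD-field axioms in each direction.

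For the direction $(\Leftarrow)$, assume $\bar D$ is surjective on $Kv$. Given $a'\in K\setminus\{0\}$ with $\alpha:=va'$, use (VDF2) to choose $m\in K$ with $vm=\alpha$ and $vDm>\alpha$. Then $a'/m\in\mathcal{O}^\times$, so surjectivity of $\bar D$ furnishes $d\in\mathcal{O}^\times$ with $v(Dd-a'/m)>0$, equivalently $v(a'-mDd)>\alpha$. Set $a:=md$. Applying the twisted Leibniz rule (VDF3),
\[
Da\>=\>mDd+dDm+e(Dd)(Dm),
\]
where $vdDm=vDm>\alpha$ and $ve(Dd)(Dm)\geq vDm>\alpha$ (using $ve\geq 0$ and $vDd\geq vd=0$ from (VDF1)). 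Combining with $v(a'-mDd)>\alpha$ gives $v(a'-Da)>\alpha=va'$, so (IH1) holds; the ultrametric triangle law then yields $vDa=\alpha=va$. Condition (IH2) is immediate from (VDF1): if $va\leq vb$ then $vDa=va\leq vb\leq vDb$.

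For the direction $(\Rightarrow)$, assume $D$ is immediate. Given $\bar c\in Kv\setminus\{0\}$, lift to $c\in\mathcal{O}^\times$ and use (IH1) to get $a\in K$ with $v(c-Da)>0$; the ultrametric triangle law forces $vDa=0$ and $(Da)v=\bar c$. By (VDF4), $\bar D(av)=\bar c$ as soon as $a\in\mathcal{O}$, so it suffices to produce such an $a$ with $va\geq 0$. By (VDF1), $va\leq 0$; if $va=0$ we are done. Otherwise set $\alpha:=va<0$ and pick $m$ via (VDF2) with $vm=\alpha$, $vDm>\alpha$. Since $vm=va$, condition (IH2) applied to $a$ yields $vDa\leq vDm$, hence $vDm\geq 0$. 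Now write $a=\lambda m+r$, where $\lambda\in\mathcal{O}^\times$ is a lift of $(a/m)v\in Kv^\times$ and $vr>\alpha$. Expanding $D(\lambda m)$ via (VDF3) and noting $vD\lambda\geq 0$ (from (VDF1) with $v\lambda=0$) together with $vDm\geq 0$, one shows $vD(\lambda m)\geq \alpha$; however, $v(a-\lambda m)=vr>\alpha$ combined with (VDF1) forces $vDr>\alpha$ and in fact $vD(\lambda m)>\alpha$. This gives $v(c-Dr)>0$ and an approximant with strictly larger value $vr>va$. One iterates this replacement (appealing to Zorn's lemma over the poset of candidates ordered by value, or by a transfinite induction) to arrive at an approximant in $\mathcal{O}$, whose residue is the desired preimage of $\bar c$ under $\bar D$.

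The main obstacle is the $(\Rightarrow)$ direction: controlling that the replacement $a\rightsquigarrow r$ genuinely improves the value $va$ while still satisfying the approximation $v(c-D(\cdot))>0$. The delicate point is the cross-term $mD\lambda$ in (VDF3), whose value is $\alpha+vD\lambda$, potentially as low as $\alpha$; one uses (VDF1) applied to $r$ (so $vDr\geq vr>\alpha$) to force enough cancellation. The iteration is then wrapped up either by a direct ordinal argument on $\{va:a\text{ satisfies (IH1) for }c\}$ or by a Zorn-style choice of a maximal chain of successive truncations.
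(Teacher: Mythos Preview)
Your $(\Leftarrow)$ direction is correct and matches the paper's argument essentially verbatim.

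For $(\Rightarrow)$, the paper gives a one-line proof: (IH1) yields $a\in K$ with $v(a'-Da)>va'\geq 0$, whence $a'v=(Da)v=\bar D(av)$. You correctly notice that the last equality tacitly presumes $a\in\mathcal{O}$, which the paper does not justify; however, your proposed repair has its own gap.

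The step ``This gives $v(c-Dr)>0$'' does not follow. Since $c-Dr=(c-Da)+D(\lambda m)$ and $v(c-Da)>0$, you would need $vD(\lambda m)>0$. Your route via $D(\lambda m)=Da-Dr$ together with $vDr\geq vr>\alpha$ from (VDF1) gives only $vD(\lambda m)>\alpha$, and $\alpha<0$. Even if you invoke (IH2) for the original $a$ (noting $v(\lambda m)=va$, hence $vD(\lambda m)\geq vDa=0$), you obtain only a weak inequality; when $vD(\lambda m)=0$ one has $(Dr)v=\bar c-(D(\lambda m))v\neq\bar c$, so $r$ no longer satisfies (IH1) for $c$ and is not a valid next approximant. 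Moreover, (IH2) was a property of the particular $a$ delivered by Proposition~\ref{ATC}, not of arbitrary elements of that value; once you pass to $r$ you have lost it, so at the next stage the argument yielding $vDm\geq 0$ is unavailable. The transfinite/Zorn wrap-up is therefore hand-waving over a real obstruction rather than a routine limit argument: you have shown neither that the replacement strictly improves while preserving (IH1), nor that the process can be continued.
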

\begin{proof}
``$\Rightarrow$'': \ Take any $a'\in {\cal O}$; we have to show that
$D(av)=a'v$ for some $a\in {\cal O}$. Condition (IH1) implies that
there is $a\in K$ such that $v(a'-Da)>va'\geq 0$, whence
$a'v=(Da)v=D(av)$.

\pars
``$\Leftarrow$'': \ Take any $a'\in K\setminus\{0\}$. By (VDF2), we
choose $c\in K$ such that $vc=va'$ with $vDc>vc$, and set $a'_0=a'/c$.
Then $va'_0=0$, and since $D$ is surjective on $Kv$, there is some
$a_0\in {\cal O}$ such that $a'_0v=D(a_0v)=(Da_0)v$. Hence,
$v(a'_0-Da_0)>0$. We set $a=ca_0\,$. We have that $va_0Dc=va_0+vDc\geq
vDc>vc$ and $ve(Dc)(Da_0)=ve+vDc+vDa_0 \geq vDc>vc$. Hence,
\begin{eqnarray*}
v(a'-Da) & = & v(ca'_0-Dca_0)\>=\>v(ca'_0-cDa_0-a_0Dc-e(Dc)(Da_0))\\
 & \geq & \min\{vc+v(a'_0-Da_0)\,,\,va_0Dc\,,\,ve(Dc)(Da_0)\}
\> >\>vc\>=\>va'\;.
\end{eqnarray*}
This shows that (IH1) holds. Since $D(a_0v)=a'_0v\ne 0$, we know that
$a_0v\ne 0$, that is, $va_0=0$. Therefore, $vDa=va'=vc=vca_0=va$.
So we obtain from (VDF1) that $va\leq vb$ implies $vDa=va\leq vb\leq
vDb$, for all $b\in K$. Hence, also (IH2) is satisfied.
\end{proof}

The next theorem is an immediate consequence of this proposition
and Theorem~\ref{MT}.

\begin{theorem}                             \label{DF1}
Let $(K,D,v)$ be a spherically complete VD-field. Assume that $D$
is surjective on $Kv$. Then $D$ is surjective on $K$.
\end{theorem}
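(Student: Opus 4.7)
The plan is to combine the two main ingredients that are now in hand: Proposition~\ref{Dimmsurj}, which translates surjectivity of $D$ on the residue field into the ultrametric notion of an \emph{immediate} map, and Theorem~\ref{MT} (or, equivalently in this additive setting, Theorem~\ref{MTadd}), which converts immediacy plus spherical completeness of the domain into surjectivity. Since the theorem is announced as an immediate consequence of these two results, the argument should fit in a few lines.

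First I would invoke Proposition~\ref{Dimmsurj}: the hypothesis that $D$ is surjective on $Kv$ directly gives that the additive map $D:K\rightarrow K$ is immediate in the ultrametric sense. Next, regarding $(K,v)$ as an ultrametric space via $u(a,b):=v(a-b)$, the assumption that $(K,D,v)$ is spherically complete as a valued field is exactly the hypothesis that $(K,u)$ is spherically complete. Theorem~\ref{MT} therefore applies and yields $D(K)=K$, which is the claim.

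Alternatively, since $D$ is a group homomorphism on the valued abelian group $(K,+,v)$, one can appeal to Theorem~\ref{MTadd} directly: Proposition~\ref{ATC} guarantees that an immediate homomorphism satisfies (IH1) and (IH2), and then Theorem~\ref{MTadd} gives both the surjectivity of $D$ and the (incidentally useful) fact that $(K,v)$ inherits spherical completeness through $D$.

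There is no real obstacle: the work has already been done in Proposition~\ref{Dimmsurj} (where axioms (VDF1)--(VDF4) were used to reconcile the ``multiplicative'' and ``additive'' accounts of the relation between $vDa$ and $va$) and in the ultrametric Main Theorem. The only thing to be careful about is to check that the spherical completeness assumption on the valued field $(K,v)$ is the same as spherical completeness of the underlying ultrametric space, which is built into the definitions given in Section~\ref{sectvf}. With that verified, the proof is a one-line concatenation of the two cited results.
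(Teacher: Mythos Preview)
Your proposal is correct and matches the paper's own reasoning exactly: the paper states the theorem as an immediate consequence of Proposition~\ref{Dimmsurj} and Theorem~\ref{MT}, which is precisely the two-step argument you give.
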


\parm
As a preparation for our ``$D$-Hensel's Lemma'', we need the
following facts:

\begin{lemma}                               \label{D1=0}
In every VD-field, $D1=0$.
\end{lemma}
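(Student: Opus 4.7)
The plan is to apply the product rule (VDF3) to $1\cdot 1$ and then bootstrap via (VDF1) and (VDF2). First I would set $a = b = 1$ in (VDF3) to get
\[D1 \;=\; D1 + D1 + e(D1)^2, \quad\text{hence}\quad D1\bigl(1 + e\cdot D1\bigr) \;=\; 0.\]
So either $D1 = 0$ (and we are done) or $1 + e\cdot D1 = 0$. I would then show the second case leads to a contradiction.

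Assume $1 + e\cdot D1 = 0$. Then $e \neq 0$ and $D1 = -e^{-1}$. I would next apply (VDF3) with $a = 1$ and $b$ arbitrary in $K$: the identity $Db = Db + b\cdot D1 + e(D1)(Db)$ simplifies, after substituting $e\cdot D1 = -1$, to $0 = b\cdot D1 - Db$, so
\[Db \;=\; b\cdot D1 \;=\; -\frac{b}{e} \quad\text{for every } b\in K.\]

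Now (VDF1) applied to an arbitrary $b\in K^\times$ gives $vb - ve = vDb \geq vb$, so $ve \leq 0$; combined with $e \in \mathcal{O}$ this forces $ve = 0$. But then $vDb = vb$ for every $b\in K^\times$, so the set $\{va \mid a\in K,\ vDa > va\}$ is empty, contradicting (VDF2) since $vK \ni v1 = 0$ is nonempty. Hence $D1 = 0$.

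The proof is almost mechanical once the right substitutions are made; the only mildly nontrivial point is to notice that the $a = 1$ specialization of (VDF3) forces $D$ to act as multiplication by $-e^{-1}$ on all of $K$, which is then easy to kill with (VDF1) and (VDF2). There is no real obstacle; the step that requires a moment of thought is recognizing that one must use the full product rule together with both valuation-theoretic axioms rather than just manipulate $D(1\cdot 1) = D1$ in isolation (which by itself leaves the case $1 + e\cdot D1 = 0$ open).
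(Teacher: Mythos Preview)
Your proof is correct and follows essentially the same route as the paper's: use (VDF3) with $1$ to force $Db=b\cdot D1$ (equivalently $eDa=-a$) for all $b$, then combine (VDF1) with $e\in\mathcal{O}$ to get $ve=0$, and finally contradict (VDF2) because $vDb=vb$ for all nonzero $b$. The only cosmetic difference is that the paper assumes $D1\ne 0$ and plugs $b=1$ (with $a$ arbitrary) directly into (VDF3), obtaining $eDa=-a$ in one step, whereas you first treat $a=b=1$ to isolate the case $1+eD1=0$ and then redo the substitution with $b$ arbitrary; your initial step is subsumed by the second, but this does no harm.
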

\begin{proof}
Suppose that $D1\ne 0$. From (VDF3) with $b=1$ we then obtain $eDa=-a$
for all $a\in K$. With $a=1$ this yields $e=-(D1)^{-1}$, so $ve\leq 0$
since $vD1\geq v1=0$ by (VDF1). But by (VDF2), $e\in {\cal O}$, so we
get $ve=0$. But then $eDa=-a$ shows that $vDa=va$ for all $a\in K$,
in contradiction to (VDF2).
\end{proof}

Recall that by $D^i$ we denote the $i$-th iterate of $D$, with $D^0$
being the identity map.

\begin{lemma}                               \label{lvDm}
Let $(K,v)$ be a VD-field and $m\in K$ such that $vDm>vm$. Then
\begin{equation}                            \label{vDm>vm}
v\left(D^i(ma)-mD^ia\right)\>>\>vma
\end{equation}
for all $a\in K^\times$, and
\begin{equation}                            \label{vD1/m}
vDm^{-1}\>>\>vm^{-1}\;.
\end{equation}
\end{lemma}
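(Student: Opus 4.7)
The plan is to prove the two claims separately, with the second following from the first applied once together with Lemma~\ref{D1=0}.

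For inequality (\ref{vDm>vm}), I would proceed by induction on $i$. The case $i=0$ is trivial since $D^0(ma)-mD^0a=0$ and $v0=\infty>vma$. For the inductive step, assume $w:=D^{i-1}(ma)-mD^{i-1}a$ satisfies $vw>vma$, so that $D^{i-1}(ma)=mD^{i-1}a+w$. Applying $D$ and using (VDF3) on the product $m\cdot D^{i-1}a$ gives
\[
D^i(ma)-mD^ia\>=\>(D^{i-1}a)Dm\,+\,e(Dm)(D^ia)\,+\,Dw\;.
\]
I would then bound each summand: $v((D^{i-1}a)Dm)\geq va+vDm>va+vm=vma$ by iterating (VDF1) and using the hypothesis $vDm>vm$; $v(e(Dm)(D^ia))\geq ve+vDm+va>vma$ using $ve\geq 0$ and (VDF1); and $v(Dw)\geq vw>vma$ by (VDF1) applied to $w$. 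The ultrametric triangle law then yields $v(D^i(ma)-mD^ia)>vma$, completing the induction.

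For inequality (\ref{vD1/m}), I would apply $D$ to the identity $1=m\cdot m^{-1}$. By Lemma~\ref{D1=0}, $D1=0$, so (VDF3) gives
\[
0\>=\>mD(m^{-1})+m^{-1}Dm+e(Dm)D(m^{-1})\>=\>(m+eDm)\,D(m^{-1})+m^{-1}Dm\;.
\]
Since $v(eDm)\geq vDm>vm$, we have $v(m+eDm)=vm$, so $m+eDm$ is nonzero and
\[
D(m^{-1})\>=\>-\frac{m^{-1}Dm}{m+eDm}\;,\qquad vD(m^{-1})\>=\>-vm+vDm-vm\>=\>vDm-2vm\;.
\]
The assumption $vDm>vm$ then gives $vDm-2vm>-vm=vm^{-1}$, as required.

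The main bookkeeping obstacle is ensuring the induction in the first part is clean: the critical point is that the ``error'' $w$ from the inductive hypothesis reappears as $Dw$ after applying $D$, and one needs (VDF1) to guarantee this does not destroy the gain in value. The second part is almost formal once one observes that $D1=0$ forces an explicit rational expression for $Dm^{-1}$; the only subtlety is verifying that the denominator $m+eDm$ has value exactly $vm$, which is where the hypothesis $vDm>vm$ is again used.
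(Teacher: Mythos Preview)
Your proof is correct and follows essentially the same approach as the paper's. For (\ref{vDm>vm}) both arguments induct on $i$ and split $D^i(ma)-mD^ia$ into the image under $D$ of the inductive error plus the Leibniz-type terms coming from (VDF3); you simply unpack the latter explicitly where the paper phrases it as the base case applied to $D^{i-1}a$. For (\ref{vD1/m}) both proofs apply $D$ to $m\cdot m^{-1}=1$, invoke $D1=0$, and read off the value of $Dm^{-1}$ from the resulting identity, using $v(eDm)>vm$ to control the denominator.
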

\begin{proof}
By assumption, $vaDm=va+vDm>va+vm=vma$ and $ve(Dm)(Da)=ve+vDm+vDa\geq
vDm+va>vm+va=vma$. Hence by (VDF3),
\[v\left(D(ma)-mDa\right)\geq\min\{vaDm\,,\,ve(Dm)(Da)\}\>>\>vma\;.\]
Now we proceed by induction on $i$. Suppose that $j>1$ and that we have
already shown (\ref{vDm>vm}) for all $i<j$ and all $a\in K$. Then
\begin{eqnarray*}
\lefteqn{v\left(D^j(ma)-mD^ja\right)\>=}\\
& = & v\left(DD^{j-1}(ma)-D(mD^{j-1}a)+
D(mD^{j-1}a)-mDD^{j-1}a\right)\\
& \geq & \min\{vD\left(D^{j-1}(ma)
-mD^{j-1}a\right),v\left(D(mD^{j-1}a)-mDD^{j-1}a\right)\}\\
 & > & \min\{vma\,,\,vmD^{j-1}a\}\>=\>vma
\end{eqnarray*}
since $vD^{j-1}a\geq va$. This proves (\ref{vDm>vm}).

\parm
By Lemma~\ref{D1=0} and (VDF3),
\[0\>=\>D1\>=\>D(mm^{-1})\>=\>mDm^{-1}+m^{-1}Dm+e(Dm)(Dm^{-1})\;.\]
From this together with $veDm\geq vDm>vm$, we infer
\[vDm^{-1}\>=\>vm^{-1}Dm-v(m+eDm)\>=\>vm^{-1}+vDm-vm\>>\>vm^{-1}\;,\]
which proves (\ref{vD1/m}).
\end{proof}

In every VD-field, condition (V$\geq$) holds for the additive
operators $\sigma_i=D^i$. This follows by induction on $i$ (and we
have used it already in the last proof). Again by induction on $i$,
(VDF4) implies that
\begin{equation}                            \label{D^iav}
(D^ia)v\>=\>D^i(av)\;\;\;\mbox{ for every }i\geq 1\;,
\end{equation}
that is, the map induced by $D^i$ on $Kv$ is the $i$-th iterate of the
map induced by $D$ on $Kv$. Indeed, having already shown that
$(D^{i-1}a)v=D^{i-1}(av)$, we obtain
$(D^ia)v=(D(D^ia))v=D((D^ia)v)=D(D^i(av))=D^i(av)$.


Now we can prove the following theorem:
\begin{theorem}                             \label{DHLp}
Let $(K,D,v)$ be a spherically complete VD-field. Take a
polynomial $f\in {\cal O}[X_0,X_1,\ldots,X_n]$ and assume that
\sn
1) there is $b\in {\cal O}$ and $s\in K$ with $vDs>vs$ such that
\[vs=\min_{0\leq i\leq n} v\frac{\partial f}{\partial X_i}(b,
Db,\ldots, D^nb)<\infty \mbox{ \ \ and \ \ } vf^D b >2vs\;,\]
2) the additive operator
\begin{equation}                            \label{opciDi}
\sum_{i=0}^{n} c_i D^i \mbox{ \ \ with \ \ } c_i=
\left(s^{-1}\frac{\partial f}{\partial X_i}(b,Db\ldots,D^nb)\right) v
\end{equation}
on the residue field $Kv$ is surjective.
\sn
Then there is an element $a\in K$ such that $f^D a=0$ and $v(a-b)>vs$.
\end{theorem}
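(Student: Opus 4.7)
The plan is to deduce Theorem~\ref{DHLp} from Theorem~\ref{genHLsao} by taking $\sigma_i := D^i$ and setting up the framework of Proposition~\ref{immopwcm}. First, I verify the basic hypotheses: condition (V$\geq$) for each $D^i$ follows by iterating (VDF1); the induced maps $\ovl{\sigma}_i$ on $Kv$ are the iterates $D^i$ by (\ref{D^iav}); and the derivatives $d_i := \frac{\partial f}{\partial X_i}(b,Db,\ldots,D^nb)$ lie in ${\cal O}$ because $f\in{\cal O}[X_0,\ldots,X_n]$ and $(b,Db,\ldots,D^nb)\in{\cal O}^{n+1}$ by (V$\geq$), so $s\in{\cal O}$ as required by Proposition~\ref{psi}.

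The main step is to construct a weak coefficient map that commutes with the $D^i$ in the sense of (\ref{IOc}). I choose, for each $\alpha\in vK$, an element $m_\alpha\in K$ satisfying $vm_\alpha=\alpha$, $m_0=1$, and, crucially, $vDm_\alpha>\alpha$. For $\alpha=0$ this is immediate since $D1=0$ by Lemma~\ref{D1=0}; for nonzero $\alpha$ the existence of such an $m_\alpha$ is exactly the content of axiom (VDF2). Lemma~\ref{exco} then produces a weak coefficient map $\co$. For every $a\in{\cal O}\setminus\{0\}$, applying (\ref{vDm>vm}) of Lemma~\ref{lvDm} with $m=m_{-va}$ (which satisfies $vDm>vm$) yields
\[
v\bigl(D^i(m_{-va}a)-m_{-va}D^ia\bigr) \;>\; v(m_{-va}a) \;=\; 0\;,
\]
which is precisely (\ref{IOc}); the case $a=0$ is trivial. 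Lemma~\ref{lIO} now gives the commutation identity (\ref{IO}).

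It remains to reconcile the coefficients: in Proposition~\ref{immopwcm}, $c_i=\co(s^{-1}d_i)$ when $vd_i=vs$ and $c_i=0$ when $vd_i>vs$, whereas the theorem takes $c_i=(s^{-1}d_i)v$. By (WCM1) these agree when $vd_i=vs$, and when $vd_i>vs$ one has $s^{-1}d_i\in{\cal M}$, forcing $(s^{-1}d_i)v=0$. Thus assumption~2 of the theorem is exactly the surjectivity hypothesis of Proposition~\ref{immopwcm}. With $(K,v)$ spherically complete and $vf^Db>2vs$, Theorem~\ref{genHLsao} delivers the desired $a\in K$ with $f^Da=0$ and $v(a-b)>vs$. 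The main obstacle is the second paragraph: securing compatibility (\ref{IOc}) between $\co$ and the iterates $D^i$ forces the choice $vDm_\alpha>vm_\alpha$ and makes essential use of axiom (VDF2) together with Lemma~\ref{lvDm}, itself depending on the Leibniz-like rule (VDF3) and on $D1=0$.
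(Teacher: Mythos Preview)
Your proof is correct and follows essentially the same route as the paper: choose $m_\alpha$ with $vDm_\alpha>vm_\alpha$ via (VDF2), build the weak coefficient map, verify (\ref{IOc}) from Lemma~\ref{lvDm}, and invoke Theorem~\ref{genHLsao}. Your reconciliation of the coefficients via (WCM1) is in fact slightly cleaner than the paper's, which goes through the extra step of choosing $m_{vs}=s$ (using $vDs^{-1}>vs^{-1}$ from (\ref{vD1/m})) to obtain $\co a=(s^{-1}a)v$ for $va=vs$; since $v(s^{-1}d_i)=0$ when $vd_i=vs$, (WCM1) gives the same identity directly and the special choice of $m_{vs}$ is not actually needed.
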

\begin{proof}
By (VDF2), we can choose elements $m_\alpha$ with $vm_\alpha=\alpha$ and
$vDm_\alpha>vm_\alpha$ for $\alpha\in vK$; we set $m_0=1$. By
Lemma~\ref{exco}, this gives rise to a weak coefficient map $\co$.
Inequality (\ref{vDm>vm}) of Lemma~\ref{lvDm} shows that condition
(\ref{IOc}) of Lemma~\ref{lIO} holds for the elements $m_\alpha$ and the
additive operators $\sigma_i= D^i$. Therefore, $\co$ satisfies
(\ref{IO}) for these operators. Since $vDs>vs$, inequality (\ref{vD1/m})
of Lemma~\ref{lvDm} shows that $vDs^{-1}>vs^{-1}$. Thus, we can choose
$m_{vs}=s$ and obtain that $\co a=(s^{-1}a)v$ whenever $va=vs$. With
$d_i$ defined as in Proposition~\ref{psi}, we thus obtain that the
elements $c_i$ defined above coincide with the elements $c_i$ defined in
Proposition~\ref{immopwcm} and that the operator $\sum_{i=0}^{n} c_i
D^i$
coincides with the operator $\sum_{i=0}^{n} c_i \ovl{\sigma}_i$ of
Proposition~\ref{immopwcm}. The former being surjective on $Kv$, our
theorem now follows from Theorem~\ref{genHLsao}.
\end{proof}

This theorem yields Theorem~\ref{DHL}. Indeed, if the assumptions of
that theorem are satisfied, then by use of (VDF2) we pick $s\in K$ with
$vDs>vs$ such that $vs=\gamma$. Since $Kv$ is assumed to be linearly
$D$-closed, the operator (\ref{opciDi}) on $Kv$ is surjective, and we
can apply Theorem~\ref{DHLp}.

%
%
\subsection{Integration on Rosenlicht valued differential
fields}                                       \label{sectrvdf}
Let $(K,D)$ be a differential field with field of constants
$C=\{a\in K\mid Da=0\}$. Following M.~Rosenlicht [R1], a valuation $v$
of $K$ is called a \bfind{differential valuation} if $C$ is a field of
representatives for the residue field of $(K,v)$ (that is, $v$ is
trivial on $C$ and for every $y\in K$ with $vy=0$ there is a unique
$c\in C$ s.t.\ $v(y-c)>0$), and $v$ satisfies
\begin{equation}                            \label{diffvdef}
\forall a,b\in K:\; va\geq 0\,\wedge\,vb>0\,\wedge\,b\ne 0\;
\Rightarrow\;v\left(\frac{bDa}{Db}\right)\,>\,0\;.
\end{equation}
Because of our assumption on $C$, this condition is equivalent to
\begin{equation}                            \label{diffv}
\forall a,b\in K\setminus\{0\}, va\ne 0, vb\ne 0:\;
va\leq vb\>\Leftrightarrow\>vDa\leq vDb\;.
\end{equation}


\begin{lemma}                               \label{fofc}
Assume that $v$ is a differential valuation with respect to $D$. Then
for every $\tilde{a}\in K$ there is some $a\in K$ such that $va\ne 0$
and $Da=D\tilde{a}$. Moreover,
\begin{equation}                            \label{diffv1}
\forall a,b\in K:\;
(0\ne va\wedge va\leq vb)\>\Rightarrow\>vDa\leq vDb\;.
\end{equation}
This shows that $\{a\in K\mid va\ne 0\}\subseteq\Reg(D)$.
\end{lemma}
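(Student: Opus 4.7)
The plan has three components corresponding to the three assertions.

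First, for the existence of $a$ with $va \ne 0$ and $Da = D\tilde{a}$: if $v\tilde{a} \ne 0$, take $a = \tilde{a}$. Otherwise $v\tilde{a} = 0$, so by the assumption that $C$ is a field of representatives for the residue field, there is a (unique) $c \in C$ with $v(\tilde{a} - c) > 0$. Setting $a := \tilde{a} - c$ gives $va > 0 \ne 0$, and $Da = D\tilde{a} - Dc = D\tilde{a}$ since $Dc = 0$ for $c \in C$.

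Next, to prove (\ref{diffv1}), I would split according to whether $vb = 0$. If $vb \ne 0$, then (\ref{diffv}) applies directly to $a,b$. If $vb = 0$, then the hypotheses $va \leq vb$ and $va \ne 0$ force $va < 0$. Again using that $C$ is a field of representatives, pick $c \in C$ with $v(b - c) > 0$ and set $b' := b - c$. Then $Db' = Db$ and $vb' > 0$, so $vb' \ne 0$, while $va < 0 < vb'$, in particular $va \leq vb'$ with both values nonzero. Now (\ref{diffv}) yields $vDa \leq vDb' = vDb$, as required.

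Finally, the inclusion $\{a \in K \mid va \ne 0\} \subseteq \Reg(D)$ is immediate: any such $a$ is nonzero (so $a$ qualifies for regularity), and (\ref{diffv1}) is exactly condition (IH2) for the map $f = D$ applied at $a$.

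The only nontrivial step is the second one, and its only subtlety is being careful that the case $vb = 0$ forces $va < 0$ so that the shift $b \mapsto b - c$ by a constant preserves the ordering $va \leq vb'$; everything else is a direct application of the hypotheses, and no computation beyond the properties of differential valuations listed above is needed.
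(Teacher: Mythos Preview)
Your proof is correct and follows essentially the same approach as the paper's: both arguments handle the first assertion by subtracting a constant representative when $v\tilde{a}=0$, and both prove (\ref{diffv1}) by reducing the case $vb=0$ to (\ref{diffv}) via the substitution $b\mapsto b-c$ with $c\in C$. Your version is slightly more explicit in noting that $va<0<v(b-c)$, which justifies applying (\ref{diffv}) after the shift, but the structure is identical.
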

\begin{proof}
If $v\tilde{a}=0$ then by our assumption that the field of constants is
a field of representatives for the residue field, there is some constant
$c$ such that $v(\tilde{a}-c)>0$; hence for $a:= \tilde{a}-c$ we have
that $va\ne 0$ and $Da=D\tilde{a}-Dc=D\tilde{a}$.

To prove (\ref{diffv1}), assume that $0\ne va$ and $va\leq vb$. If
$vb=0$, then we choose a constant $c$ such that $v(b-c)> 0$. So we can
infer from (\ref{diffv}) that $vDa\leq vD(b-c)=vDb$.
\end{proof}

\begin{proposition}                         \label{dv=tc}
Let $v$ be a differential valuation on $(K,D)$. Then $D:\>
(K,v) \rightarrow (K,v)$ is immediate if and only if
$(K,D,v)$ admits asymptotic integration.
\end{proposition}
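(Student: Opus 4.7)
The plan is to reduce to Proposition~\ref{ATC}, which characterizes immediateness of a group homomorphism $f:G\to G'$ by the existence, for each $a'\ne 0$, of an $a$ satisfying (IH1) $v(a'-fa)>va'$ and (IH2) $va\le vb\Rightarrow vfa\le vfb$. Since $D$ is additive, it is a group homomorphism on $(K,+)$, so this characterization applies with $f=D$.

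The direction $(\Rightarrow)$ is immediate: (IH1) with $f=D$ is precisely the defining condition of asymptotic integration, so if $D$ is immediate then $(K,D,v)$ admits asymptotic integration.

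For the direction $(\Leftarrow)$, I will verify (IH1) and (IH2) for every $a'\in K\setminus\{0\}$. By hypothesis of asymptotic integration there is some $\tilde a\in K$ with $v(a'-D\tilde a)>va'$; the task is to arrange that the chosen element also satisfies the monotonicity condition (IH2). Here Lemma~\ref{fofc} is the key tool: it produces $a\in K$ with $va\ne 0$ and $Da=D\tilde a$ (if $v\tilde a=0$, subtract the residue constant; otherwise take $a=\tilde a$). Since $Da=D\tilde a$, (IH1) is preserved: $v(a'-Da)=v(a'-D\tilde a)>va'$.

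It then remains to check (IH2). For any $b\in K$ with $va\le vb$, since $va\ne 0$, the implication (\ref{diffv1}) of Lemma~\ref{fofc} gives $vDa\le vDb$, which is (IH2). Thus Proposition~\ref{ATC} applies and yields that $D$ is immediate. The only mild subtlety is the need to pass from an arbitrary witness $\tilde a$ of asymptotic integration to one with non-zero value, but this is precisely what Lemma~\ref{fofc} is set up to supply, and it does not disturb the value $Da$; no further obstacle is expected.
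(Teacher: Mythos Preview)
Your proof is correct and follows essentially the same approach as the paper's own proof: both directions go through Proposition~\ref{ATC}, with the forward direction reading off (IH1) as asymptotic integration, and the backward direction using Lemma~\ref{fofc} to replace the witness of asymptotic integration by one with nonzero value so that (\ref{diffv1}) yields (IH2). Your version simply spells out the details a bit more explicitly than the paper does.
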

\begin{proof}
``$\Rightarrow$'': \ Condition (IH1) implies that
$(K,D,v)$ admits asymptotic integration.
\sn
``$\Leftarrow$'': \ Take any $a'\in K\setminus\{0\}$. Since $(K,D,v)$
admits asymptotic integration, there is some $a\in K$ such that
$v(a'-Da)>va'$, that is, (IH1) holds. By Lemma~\ref{fofc}, $a$
can be chosen such that $va\ne 0$ and (IH2) holds.
\end{proof}

The next theorem is an immediate consequence of this proposition
and Theorem~\ref{MT}.

\begin{theorem}                             \label{D1}
Let $(K,D)$ be a differential field, endowed with a
spherically complete differential valuation $v$. Assume further that
$(K,D)$ admits asymptotic integration. Then $(K,D)$ admits integration.
\end{theorem}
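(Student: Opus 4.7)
The plan is to combine Proposition~\ref{dv=tc} with the Ultrametric Main Theorem (Theorem~\ref{MT}) in a direct two-step argument; the proof should essentially be a one-liner, as the text itself announces (``an immediate consequence of this proposition and Theorem~\ref{MT}'').

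First, I would apply Proposition~\ref{dv=tc}: since $v$ is a differential valuation on $(K,D)$ and $(K,D,v)$ admits asymptotic integration by hypothesis, the proposition gives that the additive map $D:(K,v)\to (K,v)$ is immediate (viewing $K$ as an ultrametric space via $u(a,b):=v(a-b)$).

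Second, I would invoke Theorem~\ref{MT}: the ultrametric space underlying $(K,v)$ is spherically complete by assumption, and $D$ is immediate by the previous step, so $D$ is surjective on $K$. By the definition in Section~\ref{sectrvdf}, surjectivity of $D$ is exactly the assertion that $(K,D)$ admits integration, completing the proof.

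There is essentially no obstacle here: all the real work has already been done in establishing Proposition~\ref{dv=tc} (which translates the analytic condition of asymptotic integration into the ultrametric condition of immediacy) and in Theorem~\ref{MT} (which turns spherical completeness plus immediacy into surjectivity). The theorem is simply the conjunction of these two results, so the proof need only cite them in the correct order.
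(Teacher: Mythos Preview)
Your proposal is correct and matches the paper's approach exactly: the paper states that the theorem ``is an immediate consequence of this proposition and Theorem~\ref{MT},'' and your two-step argument (Proposition~\ref{dv=tc} to get that $D$ is immediate, then Theorem~\ref{MT} to get surjectivity from spherical completeness) is precisely that consequence spelled out.
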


\pars
For certain applications, one has to work with a field $K$ which is a
union of an increasing sequence of power series fields $K_i\,$,
$i\in\N$. If this sequence does not become stationary, then $K$ itself
will not be spherically complete. However, we still can prove the
following:
\begin{theorem}                             \label{LE}
Let $(K,v)$ be the union of an increasing chain $(K_i,v)$ of spherically
complete valued fields, $i\in\N$. Let $D$ be a derivation on $K$ such
that $v$ is a differential valuation with respect to $D$. Assume further
that for each $i$ there are elements $a_{i,j}\in K_{i+1}\,$, $j\in
I_i\,$, such that
\sn
1) \ $Da_{i,j}\in K_i\,$ for all $j\in I_i\,$,
\n
2) \ the valued $K_i$-subvector space $V_i:=K_i+\sum_{j\in I_i}
K_ia_{i,j}$ of $K_{i+1}$ is spherically complete,
\n
3) \ for every $b\in K_i$ there is some $a\in V_i$ such that
$v(b-Da)>vb$.
\sn
Then $(K,D)$ admits integration.
\end{theorem}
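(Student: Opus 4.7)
Given $b\in K$, pick $i$ with $b\in K_i$. I will apply the Attractor Theorem (Theorem~\ref{MTattr}) to the additive map $D\colon V_i\rightarrow K$, viewed as a map of ultrametric spaces, taking $b$ as the proposed attractor. By condition~2), the domain $V_i$ is spherically complete, so once $b$ is verified to be an attractor, the theorem yields $b\in D(V_i)\subseteq D(K)$, and since $b\in K$ was arbitrary, $(K,D)$ admits integration.

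\textbf{Verifying the attractor property.} For each $y\in V_i$ with $Dy\ne b$, I must produce $z\in V_i$ satisfying (AT1) and (AT2). Writing $y=c_0+\sum_j c_j a_{i,j}$ with $c_0,c_j\in K_i$ (finitely many $c_j$ nonzero) and using condition~1) ($Da_{i,j}\in K_i$), the element $b-Dy$ decomposes as
\[
b-Dy\;=\;\beta\;-\;Dc_0\;-\;\sum_j Dc_j\cdot a_{i,j},\qquad \beta:=b-\sum_j c_j Da_{i,j}\;\in\; K_i.
\]
Once the ``dominant part'' claim $v\beta=v(b-Dy)$ is established, condition~3) applied to $\beta\in K_i$ provides $\tilde a\in V_i$ with $v(\beta-D\tilde a)>v\beta$, so setting $z:=y+\tilde a\in V_i$ gives (AT1). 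For (AT2), after adjusting $\tilde a$ by a suitable constant (using Lemma~\ref{fofc}) so that $v\tilde a\ne 0$, the differential valuation property~(\ref{diffv1}) supplies the monotonicity $vx\ge v\tilde a\Rightarrow vDx\ge vD\tilde a$ for $x\in V_i$; (AT2) then follows exactly as in the deduction of (IH2) in the proof of Proposition~\ref{dv=tc}.

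\textbf{Main obstacle.} The principal difficulty is precisely the dominant-part claim: controlling the correction $Dc_0+\sum_j Dc_j\cdot a_{i,j}$ so that its valuation is strictly larger than $v\beta$. This does not drop out directly from the hypotheses; it will likely require a preliminary reduction---replacing $y$ by a better representative, perhaps via a separate iteration of condition~3) inside $V_i$ to absorb the correction into the $K_i$-part---and must exploit the differential valuation property~(\ref{diffv}) applied to the pairs $(c_0,a_{i,j})$ and $(c_j,a_{i,j})$ to bound $vDc_0$ and $vDc_j$ from below. A secondary nuisance is that Lemma~\ref{fofc} asks for the modifying constant to lie in $V_i$, which is transparent when $C\subseteq K_i$ but requires additional justification otherwise.
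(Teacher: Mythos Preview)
Your proposal has the gap you yourself flagged as the ``main obstacle'': the dominant-part claim $v\beta=v(b-Dy)$ does not follow from the stated hypotheses, and the remedies you sketch---iterating condition~3) inside $V_i$, or bounding $vDc_0,vDc_j$ via~(\ref{diffv})---have no clear route to success. In particular, (\ref{diffv}) only compares $vDa$ with $vDb$ when both $va,vb\ne 0$, and says nothing about $vDc_j$ relative to $v\beta$; there is no mechanism in the hypotheses forcing the correction $Dc_0+\sum_j (Dc_j)a_{i,j}$ to have value strictly larger than $v\beta$.

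The paper's proof sidesteps the whole issue by a single observation you missed: from condition~1) it asserts $D(V_i)\subseteq K_i$, and then regards $D$ as a map $V_i\to K_i$ rather than $V_i\to K$. With that in hand, for every $y\in V_i$ the element $b-Dy$ already lies in $K_i$, so condition~3) applies \emph{directly to $b-Dy$} and yields $\tilde a\in V_i$ with $v\bigl((b-Dy)-D\tilde a\bigr)>v(b-Dy)$; this is (IH1) with no decomposition whatsoever. The remainder is verbatim the argument of Proposition~\ref{dv=tc}: use Lemma~\ref{fofc} to adjust $\tilde a$ so that $v\tilde a\ne 0$, obtain (IH2) from~(\ref{diffv1}), and invoke Theorem~\ref{MT} together with the spherical completeness of $V_i$ (condition~2)) to conclude $D(V_i)=K_i$. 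Your ``$\beta$ plus correction'' splitting is never needed; the point is simply to work with the correct codomain $K_i$.
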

\begin{proof}
It suffices to show that for each $i$, $D$ is a surjective map from
$V_i$ onto $K_i\,$. Since $K=\bigcup_{i\in\N}^{}K_i$ it then follows
that $D$ is surjective on $K$.

Because of 1), we have that $DV_i\subseteq K_i\,$. We set $Y=V_i$ and
$Y'=K_i\,$. As in the proof of Proposition~\ref{dv=tc} one uses 3) to
show that $D:Y \rightarrow Y'$ is immediate. From 2) together with
Theorem~\ref{MT}, one obtains that $DV_i=K_i\,$.
\end{proof}

This theorem implies that the derivation on the
logarithmic-exponential power series field $\R((t))^{LE}$ (cf.\ [DMM3])
is surjective. The argument is as follows. It can be shown that
$\R((t))^{LE}$ is the union over an increasing sequence of differential
power series fields $K_i\,$ such that for every $i$ there is just one
$a_i\in K_{i+1}$ such that $Da_i\in K_i$ and condition 3) holds. In
fact, $a_i=\log_i x$ for a certain element $x$, where $\log_i$ denotes
the $i$-th iterate of
$\log$. Further, $va_i$ is rationally independent over $vK$. It follows
that $v(c+c'a_i)=\min\{vc, vc'a_i\}$ for all $c,c'\in K_i\,$, that is,
the ultrametric space underlying $V_i$ is just the direct product of the
one underlying $K_i$ and the one underlying $K_i a_i\,$. As the latter
is isomorphic to the one underlying $K_i\,$, both are spherically
complete. By Proposition~\ref{prodsphc}, their direct product is
spherically complete. The foregoing theorem now proves the surjectivity
of $D$.

%
%
%
\subsection{Differential equations on Rosenlicht valued
differential fields}                            \label{sectde}
Now let us assume in addition that
\begin{equation}                            \label{DM}
D({\cal M})\>\subseteq\> {\cal M}\;.
\end{equation}
If $K$ contains an element $x$ such that $vDx=0$ and $vx<0$ (as it is
the case in $\R((t))^{LE}$, see below), then (\ref{DM}) is a consequence
of (\ref{diffv}). In fact, (\ref{DM}) also holds in every Hardy field.
If (\ref{DM}) does not hold for a derivation $D$, then we may replace
$D$ by the derivation $aD$, with $0\ne a\in K$; it follows from
(\ref{diffv}) that (\ref{DM}) will hold for $aD$ in the place
of $D$ for every $a$ of sufficiently high value $va$.

Assumption (\ref{DM}) implies that $D^i({\cal M})\subseteq {\cal M}$ for
each $i\in\N$. We leave it to the reader to use this fact together with
(\ref{diffv}) to prove the following easy lemma by induction on $i$:
\begin{lemma}                               \label{lDk}
If $(K,D,v)$ admits asymptotic integration,
then for each $i\in\N$, the map
\begin{equation}                            \label{Dk}
D^i: {\cal M}\>\longrightarrow\>{\cal M}_{D^i}\>:=\>
\bigcup_{e\in {\cal M}} (D^i e){\cal O}\>\subseteq\>{\cal M}
\end{equation}
is an immediate embedding of ultrametric spaces with value map
$va\mapsto vD^i a$.
\end{lemma}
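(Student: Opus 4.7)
I would argue by induction on $i$, with the base case $i=0$ trivial because $D^0$ is the identity on ${\cal M}={\cal M}_{D^0}$. For the inductive step from $i-1$ to $i$, I first use (\ref{DM}) iteratively to see $D^i{\cal M}\subseteq{\cal M}$ and observe that $D^ia\in(D^ia){\cal O}\subseteq{\cal M}_{D^i}$ for $a\in{\cal M}\setminus\{0\}$. Since $D^i$ is additive, Proposition~\ref{ATC} reduces both the embedding property and immediateness to two assertions: the map $va\mapsto vD^ia$ is well-defined and strictly $<$-preserving on $v({\cal M}\setminus\{0\})$, and for every $a'\in {\cal M}_{D^i}\setminus\{0\}$ there is $a\in{\cal M}$ with $v(a'-D^ia)>va'$. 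The value-map assertion follows from the inductive hypothesis combined with (\ref{diffv}) applied to $D^{i-1}a,D^{i-1}b\in {\cal M}\setminus\{0\}$, both having non-zero values.

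The heart of the argument is producing the required $a$. Since $D^{i-1}e\in{\cal M}$ by the inductive hypothesis, we have ${\cal M}_{D^i}\subseteq {\cal M}_D$. Asymptotic integration furnishes some $\tilde a\in K$ with $v(a'-D\tilde a)>va'$, so $vD\tilde a=va'>0$, and by Lemma~\ref{fofc} we may assume $v\tilde a\ne 0$. I next show $\tilde a\in {\cal M}_{D^{i-1}}$ in two steps. First, $v\tilde a>0$: if instead $v\tilde a<0$, then for any $e\in{\cal M}\setminus\{0\}$ the inequality $v\tilde a<ve$ combined with (\ref{diffv}) gives $vD\tilde a<vDe$, contradicting the inequality $va'\geq vDe_0$ obtained from $a'\in {\cal M}_D$. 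Second, writing $a'\in(D^ie){\cal O}$ for some $e\in{\cal M}\setminus\{0\}$, the inequality $vD\tilde a=va'\geq vD^ie=vDD^{i-1}e$ translates, via (\ref{diffv}) applied to $\tilde a,D^{i-1}e\in{\cal M}\setminus\{0\}$, into $v\tilde a\geq vD^{i-1}e$, so $\tilde a\in(D^{i-1}e){\cal O}\subseteq {\cal M}_{D^{i-1}}$. Applying the inductive hypothesis to $\tilde a\in {\cal M}_{D^{i-1}}$ yields $a\in{\cal M}$ with $v(\tilde a-D^{i-1}a)>v\tilde a$ (the degenerate case $\tilde a=D^{i-1}a$ is handled directly by taking that same $a$). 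One final use of (\ref{diffv}) on $\tilde a-D^{i-1}a\in{\cal M}\setminus\{0\}$ boosts this to $vD(\tilde a-D^{i-1}a)>vD\tilde a=va'$, and the ultrametric triangle law then gives $v(a'-D^ia)>va'$, as required.

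The main obstacle is the upgrade from a $\tilde a\in K$ (supplied only by asymptotic integration) to a $\tilde a$ lying in the smaller subgroup ${\cal M}_{D^{i-1}}$; this is precisely what allows the induction to close. Both parts of that upgrade hinge on the bidirectional character of (\ref{diffv}) — the first to rule out negative $v\tilde a$ using the containment $a'\in {\cal M}_D$, the second to transport the value inequality about $D^ie$ back to one about $D^{i-1}e$. Once $\tilde a$ has been located in ${\cal M}_{D^{i-1}}$, the remainder of the step is a routine combination of the inductive hypothesis with the strict monotonicity of the value map under one further action of $D$.
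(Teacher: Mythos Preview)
Your proof is correct and follows precisely the approach the paper indicates: induction on $i$ using (\ref{DM}) and the equivalence (\ref{diffv}). The paper leaves the details to the reader, and you have supplied them faithfully, including the key step of pulling the asymptotic antiderivative $\tilde a$ back into ${\cal M}_{D^{i-1}}$ so the inductive hypothesis applies.
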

Hence by Theorem~\ref{MT}, we have:
\begin{lemma}
If $(K,D,v)$ is spherically complete and  admits asymptotic integration,
then the map (\ref{Dk}) is an isomorphism of ultrametric spaces.
\end{lemma}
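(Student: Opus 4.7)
The plan is to deduce this lemma directly from Theorem~\ref{MT} applied to the setup established by the previous lemma. First, I would note that by the previous lemma, the map $D^i:\mathcal{M}\to\mathcal{M}_{D^i}$ is an immediate embedding of ultrametric spaces with value map $va\mapsto vD^ia$. So the only remaining task is to show surjectivity; the embedding property together with surjectivity yields an isomorphism.

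Next, I would verify the hypothesis of Theorem~\ref{MT} that the domain is spherically complete as an ultrametric space. Since $(K,v)$ is spherically complete by assumption and $\mathcal{M}$ is a ball in $K$ (the closed ball of radius any strictly positive element, or rather the open ball $\{a\mid va>0\}$ written as a union of closed balls around $0$), the remark in Section~\ref{sectvg}/Section~\ref{sectvf} that balls inside a spherically complete ultrametric space are themselves spherically complete applies to $\mathcal{M}$. Thus $(\mathcal{M},u)$ is spherically complete.

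Now I would apply Theorem~\ref{MT} to $f=D^i$ with $Y=\mathcal{M}$ and $Y'=\mathcal{M}_{D^i}$: immediacy plus spherical completeness of $Y$ give that $D^i(\mathcal{M})=\mathcal{M}_{D^i}$, i.e. $D^i$ is onto. Since by the previous lemma $D^i$ is already an embedding with value map $va\mapsto vD^ia$, surjectivity upgrades this embedding to an isomorphism of ultrametric spaces, as required. (As a bonus one also obtains that $\mathcal{M}_{D^i}$ is spherically complete, though this is not needed for the statement.)

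There is really no main obstacle: all the structural work, in particular the verification of conditions (IH1) and (IH2) needed for immediacy and the preservation of the value map under $D^i$, has already been carried out in the preceding lemma, which reduces the present claim to a one-line invocation of Theorem~\ref{MT} once spherical completeness is transferred from $K$ to its ball $\mathcal{M}$.
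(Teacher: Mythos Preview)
Your proposal is correct and matches the paper's approach exactly: the paper simply states the lemma as a direct consequence of Theorem~\ref{MT} applied to the immediate embedding provided by Lemma~\ref{lDk}, and your verification that $\mathcal{M}$ is spherically complete (as a ball in a spherically complete space) fills in the only detail the paper leaves implicit.
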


When we try to prove a differential Hensel's Lemma for Rosenlicht's
differential valuations, we have to deal with the problem that the
connection between $vD^ia$ and $vD^ja$ for $i\ne j$ is not as nice as in
the case of $D$-fields. The natural hypothesis on the partial
derivatives as used in Theorem~\ref{DHL} may not suffice. We need to set
up a relation between the values $vy,vDy,\ldots,vD^ny$. The key is
definition (\ref{diffvdef}) of a differential valuation. By induction,
it implies that for arbitrary $e\in {\cal M}$,
\begin{equation}                            \label{vDky}
vD^i y+(n-i)vDe\>>\>vD^ny\qquad \mbox{for } 0\leq i<n\;.
\end{equation}
Because of this relation, we will have to assume that the partial
derivative of least value appears at the variable $X_n$ which is
associated with the highest power $D^n$ of~$D$. The following is a
special case of Theorem~\ref{genHLdom} in Section~\ref{sectdom}:

\begin{theorem}                             \label{DVHL}
Let $(K,D)$ be a differential field, endowed with a spherically complete
differential valuation $v$. Assume that $(K,D,v)$ admits asymptotic
integration. Take a polynomial $g\in {\cal O}[X_0,X_1,\ldots,X_n]$ and
assume that there are $b\in {\cal O}$ and $e\in {\cal M}$ such that,
with $d:=De$,
\[g(d^{-n}X_0,d^{1-n}X_1,\ldots,d^{-1}X_{n-1},X_n)
\in {\cal O}[X_0,X_1,\ldots,X_n]\]
and
\begin{equation}                            \label{DVHLc1}
v\frac{\partial g}{\partial X_n}(b,Db,\ldots, D^n b)
\>=\> \min_{0\leq i\leq n}vd^{i-n}\frac{\partial g}{\partial X_i}
(b,Db,\ldots,D^n b)\>=\>0
\end{equation}
and
\begin{equation}                            \label{DVHLc2}
vg^D b\>\geq\> vD^n e\;.
\end{equation}
Then there is a unique element $a\in {\cal O}$ such that
$g^D a=0$. It satisfies $v(a-b)\geq ve$.
\end{theorem}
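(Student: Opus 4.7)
The plan is to reduce to Theorem~\ref{genHLdom} via a change of variables that turns $D^n$ into the leading operator of a dominant system. Setting $d:=De$, introduce the additive operators $\sigma_i:=d^{n-i}D^i:{\cal O}\to{\cal O}$ for $0\le i\le n$ and the rescaled polynomial
\[
\tilde g(X_0,\ldots,X_n)\;:=\;g(d^{-n}X_0,\,d^{1-n}X_1,\ldots,d^{-1}X_{n-1},X_n),
\]
which by the first hypothesis lies in ${\cal O}[X_0,\ldots,X_n]$. A direct substitution gives $\tilde g^\sigma(X)=g^D(X)$, so the two root-finding problems coincide.

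I then verify, one by one, the hypotheses of Proposition~\ref{immopdom} (and thus of Theorem~\ref{genHLdom}) for $\tilde g$, $b$, and $\sigma$, taking $B:=e{\cal O}\subseteq{\cal M}$. The dominance condition (\ref{domin}) holds because for $a\in e{\cal O}\setminus\{0\}$ and $i<n$,
\[
v\sigma_i a\;=\;(n-i)vDe+vD^ia\;>\;vD^na\;=\;v\sigma_na
\]
by (\ref{vDky}). By the chain rule, $\partial\tilde g/\partial X_i(\sigma_0b,\ldots,\sigma_nb)=d^{i-n}(\partial g/\partial X_i)(b,Db,\ldots,D^nb)$ for $i<n$, while at $i=n$ it equals $(\partial g/\partial X_n)(b,Db,\ldots,D^nb)$, so hypothesis (\ref{DVHLc1}) translates exactly into $v\tilde d_n=\min_i v\tilde d_i=0$; this gives (\ref{dnmindi}) and makes $\tilde d_n{\cal M}={\cal M}$.

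For the immediacy of $\sigma_n=D^n$ on $B$, I will combine Lemma~\ref{lDk} with Theorem~\ref{MT}: since $K$ is spherically complete and $(K,D,v)$ admits asymptotic integration, $D^n:{\cal M}\to{\cal M}_{D^n}$ is a bijective immediate embedding whose value map $va\mapsto vD^na$ is strictly order-preserving, so it maps $e{\cal O}$ bijectively onto $B':=\{y'\in{\cal M}_{D^n}:vy'\ge vD^ne\}\subseteq{\cal M}$, and a direct verification of (AT1) and (AT2) for this bijection shows $D^n:B\to B'$ is itself immediate (and injective). Finally, condition~(\ref{genHLdomc}) of Theorem~\ref{genHLdom} reads $vg^Db\ge v\tilde d_n+v\sigma_ne=vD^ne$, which is exactly~(\ref{DVHLc2}).

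Theorem~\ref{genHLdom} then produces a unique $a\in b+e{\cal O}$ with $\tilde g^\sigma(a)=0$; since $\tilde g^\sigma=g^D$ this is the required root, and $v(a-b)\ge ve$ is immediate from $a\in b+e{\cal O}$. The main obstacle is pinning down the rescaling: the exponents of $d$ appearing in the definition of $\tilde g$ must simultaneously keep $\tilde g$ inside ${\cal O}[X_0,\ldots,X_n]$, normalize the smallest partial derivative at the new last variable $X_n$ to value~$0$, and produce the dominance inequality from (\ref{vDky}). The precise rescaling written in the theorem statement is engineered so that all three of these conditions fit together, after which the rest is routine verification.
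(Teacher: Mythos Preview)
Your approach is essentially the paper's own: introduce the rescaled polynomial $f=\tilde g$, set $\sigma_i:=d^{\,n-i}D^i$, verify dominance from (\ref{vDky}) and the derivative condition (\ref{dnmindi}) from (\ref{DVHLc1}), check that $\sigma_n=D^n$ is immediate via Lemma~\ref{lDk}, and invoke Theorem~\ref{genHLdom}.

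The one substantive difference is your choice of ball. You take $B=e{\cal O}$, while the paper takes $B={\cal M}$ and $B'={\cal M}_{D^n}$. Your choice buys you the conclusion $v(a-b)\ge ve$ for free, but it only yields uniqueness of the root in $b+e{\cal O}$; the paper's larger ball gives uniqueness in all of $b+{\cal M}$ (which is what the theorem asserts, reading ``unique $a\in{\cal O}$'' as ``unique $a\in b+{\cal M}$''), and then recovers $v(a-b)\ge ve$ at the end by translating $vD^n(a-b)\ge vD^ne$ back through (\ref{diffv}). So your argument is correct but proves a slightly weaker uniqueness statement than intended; to match the paper, simply work on $B={\cal M}$ and add the one-line deduction of $v(a-b)\ge ve$ from $v\sigma_n(a-b)\ge v\sigma_n e$ via (\ref{diffv}).
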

\begin{proof}
Set $f(X_0,\ldots,X_n)=g(d^{-n}X_0,d^{1-n}X_1,\ldots,d^{-1}X_{n-1},
X_n)\in {\cal O}[X_0,X_1,\ldots,X_n]$. With $d_i$ defined as in
(\ref{d_i}) of Proposition~\ref{psi}, it follows from (\ref{DVHLc1})
that $0=vd_n=\min_i vd_i$, which shows that (\ref{dnmindi}) of
Proposition~\ref{immopdom} is satisfied. Further, we set $\sigma_i:=
d^{n-i}D^i$, $B:={\cal M}$ and $B':= {\cal M}_{D^n} \subseteq {\cal M}=
d_n{\cal M}$. Then by (\ref{vDky}), $v\sigma_n a< v\sigma_i a$ for all
$i<n$ and $a\in {\cal M}$, showing that (\ref{domin}) holds. Since
$\sigma_n ({\cal M}) =D^n({\cal M})\subseteq B'\subseteq {\cal M}$, it
follows that also $\sigma_i({\cal M})\subseteq {\cal M}$ for $0\leq
i\leq n$. Condition (\ref{DVHLc2}) tells us that condition
(\ref{genHLdomc}) of Theorem~\ref{genHLdom} is satisfied. Finally,
Lemma~\ref{lDk} tells us that $D^n: {\cal M} \rightarrow B'$ is
immediate and injective. We have proved that all conditions of
Theorem~\ref{genHLdom} are satisfied. Hence, there is a unique
element $a\in b+{\cal M}$ such that $g^D a=f(\sigma_0 a,
\sigma_1 a,\ldots,\sigma_n a)=0$, and it satisfies $v\sigma_n(a-b)\geq
v\sigma_n e$. The latter means that $vD^n(a-b)\geq vD^ne$, which by
(\ref{diffv}) implies $v(a-b)\geq ve$ since $a-b\,,\,e\in {\cal M}$.
\end{proof}

This theorem can be improved if one also considers the values of the
higher derivatives of $f$. The formal higher derivatives
$f^{[\,\ul{i}\,]}$ have already been introduced and used in
Section~\ref{sectbr}. We will work with the Rosenlicht system
\[\sigma_i\>:=\>D^i\,,\;\;\; e_i\>:=\>(De)^{n-i}\]
for fixed $n\in\N$ and some $e\in {\cal M}$. Then condition (\ref{vei})
in Section~\ref{sectRso} is trivially satisfied, and condition
(\ref{Rsys}) is satisfied because of (\ref{vDky}). We will apply
Theorem~\ref{genHLRos} to prove:

\begin{proposition}
Take $f\in {\cal O}[X_0,X_1,\ldots,X_n]$ and $b\in {\cal O}$ such that
\[d_n\>=\>\frac{\partial f}{\partial X_n}(b,Db,\ldots,D^n b)\>\ne\>0\]
and for all $\ul{i}\in I=\{0,\ldots,\deg f\}^{n+1}\setminus
\{(0,\ldots,0)\}$,
\begin{equation}                            \label{vfiD}
vf^{[\,\ul{i}\,]} (b,Db,\ldots,D^n b)\>\geq\>vd_n+ve_k\;\;\;\mbox{ if \ }
k= \min\{j\mid i_j \ne 0\}\>.
\end{equation}
Suppose further that for some balls $B,B'\subseteq {\cal M}$ around $0$,
$D^n: B\rightarrow B'$ is immediate. Then
\begin{equation}                            \label{embDi}
b+B\ni x\mapsto f^D x\in f^D b +d_n B'
\end{equation}
is an immediate embedding of ultrametric spaces with value map
$va\mapsto vd_n+vD^n a$. If $(K,v)$ is spherically complete, it is an
isomorphism of ultrametric spaces.
\end{proposition}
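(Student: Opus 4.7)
The plan is to reduce the assertion to Proposition~\ref{immopRos} by exhibiting $\sigma_i:=D^i$ together with $e_i:=(De)^{n-i}$ as a Rosenlicht system. First I verify the two Rosenlicht conditions. Since $e\in{\cal M}$, assumption (\ref{DM}) forces $De\in{\cal M}$, so $vDe>0$. Then $e_n=(De)^0=1$ and $ve_i=(n-i)vDe$ is decreasing in $i$ down to $ve_n=0$, giving (\ref{vei}). Condition (\ref{Rsys}) reads $(n-i)vDe+vD^ia>vD^na$ for $a\in{\cal M}\setminus\{0\}$, which is exactly inequality (\ref{vDky}) obtained by induction from the differential-valuation axiom (\ref{diffvdef}).

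Next, condition (\ref{vfiD}) of our proposition is literally condition (\ref{vfi}) of Lemma~\ref{Rlvf-psi} when we evaluate at the tuple $(b,Db,\ldots,D^nb)\in{\cal O}^{n+1}$ and substitute $e_k=(De)^{n-k}$. Since $D^n=\sigma_n:B\to B'$ is immediate by hypothesis, all the assumptions of Proposition~\ref{immopRos} are in place. Applying it yields that the map
\[b+B\ni x\mapsto f^\sigma x=f(x,Dx,\ldots,D^nx)=f^D x\,\in\,f^D b+d_n B'\]
is immediate and injective; in particular $f^D(b+B)\subseteq f^Db+d_nB'$.

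To upgrade from \emph{immediate and injective} to \emph{immediate embedding with value map $va\mapsto vd_n+vD^na$}, I invoke the explicit computation (\ref{vmsig}) in the proof of Proposition~\ref{immopRos}, which with our substitutions reads $v(f^Dy-f^Dz)=vd_n+vD^n(y-z)$ for all distinct $y,z\in b+B$. The right-hand side is a well-defined function of $v(y-z)$: since $D^n\colon B\to B'$ is immediate, Proposition~\ref{indreg} makes $va\mapsto vD^na$ a well-defined $\leq$-preserving map on values (and under the standing assumption that $(K,D,v)$ admits asymptotic integration, Lemma~\ref{lDk} asserts outright that $D^n$ is an embedding of ultrametric spaces with this value map). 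Composing with the shift by $vd_n$ gives the claimed value map, and preservation of strict inequalities is inherited from that of $D^n$.

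Finally, if $(K,v)$ is spherically complete, then so is the ball $b+B$; Theorem~\ref{MT} applied to the immediate map $f^D\colon b+B\to f^Db+d_nB'$ yields surjectivity, and combined with the injectivity already established this produces the desired isomorphism of ultrametric spaces. The only mildly delicate point is the well-definedness of the value map on the target side, but this is handled cleanly by (\ref{vmsig}) together with the immediate-embedding character of $D^n$ on $B$; everything else is bookkeeping to match the hypotheses of Proposition~\ref{immopRos}.
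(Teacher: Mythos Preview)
Your proof is correct and follows essentially the same route as the paper: set up $\sigma_i=D^i$, $e_i=(De)^{n-i}$ as a Rosenlicht system (the paper does this in the text immediately preceding the proposition), apply Proposition~\ref{immopRos} for immediacy and injectivity, read off the value map from (\ref{vmsig}), and invoke Theorem~\ref{MT} for the spherically complete case. One minor point: for the strict-inequality preservation of $va\mapsto vD^na$, the paper appeals directly to the differential-valuation axiom (\ref{diffv}) on ${\cal M}$ rather than going through Proposition~\ref{indreg} or Lemma~\ref{lDk}; your detour via Lemma~\ref{lDk} imports an unneeded asymptotic-integration hypothesis, whereas (\ref{diffv}) alone already gives $va<vb\Leftrightarrow vD^na<vD^nb$ for $a,b\in{\cal M}\setminus\{0\}$.
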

\begin{proof}
All this follows from Proposition~\ref{immopRos} and Theorem~\ref{MT}.
It just remains to prove that (\ref{embDi}) is an embedding of
ultrametric spaces with value map $va\mapsto vd_n+vD^n a$. But this
follows from equation (\ref{vmsig}) of Proposition~\ref{immopRos} and
the fact that $va\mapsto vD^na$ for $a\in {\cal M}$ preserves ``$<$''.
\end{proof}

\begin{theorem}                             \label{DVHLR}
Let $(K,D)$ be a differential field, endowed with a spherically complete
differential valuation $v$. Assume that $(K,D,v)$ admits asymptotic
integration. Take a polynomial $f\in {\cal O}[X_0,X_1,\ldots,X_n]$ and
assume that there are $b\in {\cal O}$ and $e\in {\cal M}$ such
that
\begin{equation}                            \label{vderiv}
\forall \ul{i}:\>
vf^{[\,\ul{i}\,]} (b,Db,\ldots,D^n b)\>\geq\>v\frac{\partial f}
{\partial X_n}(b,Db,\ldots, D^n b)+(n-k)vDe\;\;\mbox{ if \ }
k= \min\{j\mid i_j \ne 0\}
\end{equation}
and
\[vf^D b\>\geq\> vD^n e\;.\]
Then there is a unique element $a\in {\cal M}$ such that
$f(a,Da,\ldots,D^na)=0$. It satisfies $v(a-b)\geq ve$.
\end{theorem}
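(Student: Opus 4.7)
The plan is to recognize Theorem~\ref{DVHLR} as a direct instance of Theorem~\ref{genHLRos} for the Rosenlicht system of operators
\[\sigma_i := D^i, \qquad e_i := (De)^{n-i}, \qquad 0 \leq i \leq n.\]
First I check that this is a Rosenlicht system in the sense of Section~\ref{sectRso}. The equality $e_n = 1$ is immediate, and the standing assumption (\ref{DM}) of Section~\ref{sectde} gives $De \in {\cal M}$ since $e \in {\cal M}$, so $vDe > 0$ and hence $ve_0 \geq ve_1 \geq \cdots \geq ve_n = 0$, which is (\ref{vei}). The Rosenlicht dominance (\ref{Rsys}), which here reads $(n-i)vDe + vD^i a > vD^n a$ for $a \in {\cal M} \setminus \{0\}$ and $i < n$, is exactly the inequality (\ref{vDky}) already derived from the differential-valuation axiom (\ref{diffvdef}). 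By Lemma~\ref{lDk}, the asymptotic-integration hypothesis then guarantees that $D^n : {\cal M} \to {\cal M}_{D^n}$ is an immediate injective embedding.

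Next I match the derivative-size hypothesis (\ref{vfiD}) of Proposition~\ref{immopRos}: substituting $ve_k = (n-k)vDe$ turns (\ref{vfiD}) into
\[vf^{[\,\ul{i}\,]}(b, Db, \ldots, D^n b) \>\geq\> vd_n + (n-k)vDe, \qquad k = \min\{j : i_j \neq 0\},\]
which is exactly the theorem's hypothesis (\ref{vderiv}). Taking $B := {\cal M}$ and $B' := {\cal M}_{D^n}$, Proposition~\ref{immopRos} therefore supplies an immediate injective embedding $b + {\cal M} \to f^D b + d_n {\cal M}_{D^n}$, $x \mapsto f^D x$, with value map $va \mapsto vd_n + vD^n a$. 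Spherical completeness of $(K, v)$ together with Theorem~\ref{MT} upgrades this embedding to a bijective isomorphism of ultrametric spaces onto the target ball.

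The hypothesis $vf^D b \geq vD^n e$ (together with $d_n \in {\cal O}$) places $f^D b$ inside the ball $d_n {\cal M}_{D^n}$; therefore $0$ lies in this ball and possesses a unique preimage $a \in b + {\cal M}$ under the isomorphism, which is the desired zero of $f^D$. The value-map identity yields $vD^n(a - b) = vf^D b - vd_n$, and combining this with $a - b,\, e \in {\cal M}$ via the reverse direction of the differential-valuation axiom (\ref{diffv}) gives $v(a - b) \geq ve$.

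The main delicate point is the last step: I need to track carefully how the shift by $vd_n$ in both the value map and in the definition of $d_n {\cal M}_{D^n}$ interacts with the hypothesis $vf^D b \geq vD^n e$, so that $f^D b$ genuinely lies in $d_n {\cal M}_{D^n}$ and so that the resulting bound on $vD^n(a - b)$ is strong enough to yield $v(a - b) \geq ve$. Once this alignment is correctly set, the rest of the proof is a direct translation between the differential-field data and the abstract Rosenlicht-system framework of Section~\ref{sectRso}, and a single appeal to Theorem~\ref{genHLRos}.
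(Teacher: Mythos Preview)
Your proposal is correct and follows exactly the paper's approach: set up the Rosenlicht system $\sigma_i=D^i$, $e_i=(De)^{n-i}$, take $B={\cal M}$ and $B'={\cal M}_{D^n}$, invoke Lemma~\ref{lDk} for the immediacy of $D^n$, and apply Theorem~\ref{genHLRos}. You in fact supply more detail than the paper's two-line proof, including the verification of (\ref{vei}) and (\ref{Rsys}) and the explicit identification of the ``delicate point'' concerning the shift by $vd_n$, which the paper simply absorbs into the phrase ``now we apply Theorem~\ref{genHLRos}.''
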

\begin{proof}
As in the proof of Theorem~\ref{DVHL}, we set $B:={\cal M}$ and $B':=
{\cal M}_{D^n}$; then $D^n: B\rightarrow B'$ is immediate by
Lemma~\ref{lDk}. Now we apply Theorem~\ref{genHLRos} instead of
Theorem~\ref{genHLdom}.
\end{proof}

If $K$ is of characteristic $0$, then the usual higher derivative
\[f^{(\,\ul{i}\,)}(X)\>:=\>\frac{\partial^{i_0+\ldots+i_n} f}
{\partial^{i_0}X_0\cdots\partial^{i_n}X_n}(X)\]
can be substituted for $f^{[\,\ul{i}\,]} (X)$ in the above theorem.
Indeed,
\[f^{(\,\ul{i}\,)} (X)\>=\>i_0!\cdot\ldots\cdot i_n!\cdot
f^{[\,\ul{i}\,]} (X)\]
and therefore,
\[vf^{(\,\ul{i}\,)} (b,Db,\ldots,D^n b)\>=\>
vf^{[\,\ul{i}\,]} (b,Db,\ldots,D^n b)\;.\]

\parb
In $\R((t))^{LE}$, the element $x=t^{-1}$ satisfies $vx<0$ and $Dx=1$.
Suppose that $1<r\in\R$. Then $e=\frac{1}{1-r}x^{1-r}\in
\R((t))^{LE}$ satisfies $ve>0$ and $De=x^{-r}$. With $K_i$ as in the
discussion at the end of Section~\ref{sectrvdf}, take
${\cal M}_i$ to be the valuation ideal of $K_i\,$. Then $\frac{1}{x}
\notin (De){\cal M}_i$ and it can be shown that for every $a'\in (De)
{\cal M}_i$ there is some $a\in e{\cal M}_i$ such that $v(a'-Da)>va'$.
As for the proof of Lemma~\ref{lDk}, it can thus be deduced that for
every $k\geq 1$, $D^k: e{\cal M}_i\rightarrow (D^k e){\cal M}_i$ is an
immediate embedding of ultrametric spaces. Hence on every ball of the
form $e{\cal M}_i$ in $K_i\,$, differential equations of the above
form can be solved without any modification of our approach.

The union of an ascending chain of henselian fields is again henselian.
With the same idea of proof, working in $K_i$ for all $i$ large enough
to contain all coefficients of $h$ and then passing to the union of the
$K_i\,$, one obtains, applying Theorem~\ref{DVHLR} with $e$ as given
above to the polynomial $f(X_0,\ldots,X_n)=g(X_0,\ldots,X_n)+c-X_n$ and
$b=0\,$:
\begin{theorem}                             \label{chform}
Let ${\cal O}$ denote the valuation ring of $\R((t))^{LE}$. Suppose
that
\begin{equation}                            \label{hform}
g(X_0,\ldots,X_n)\>\in\> \sum_{i=0}^{n-1}x^{-(n-i)r}X_i\,
{\cal O}[X_i\,,\,\ldots\,,\,X_n]\>+\>X_n^2{\cal O}\>+\>X_n{\cal M}
\end{equation}
and
\[c\in x^{-r-n+1}\,{\cal O}\;.\]
Then the differential equation
\begin{equation}
D^ny\>=\>g(y,Dy,\ldots,D^ny)\,+\,c
\end{equation}
has a unique infinitesimal solution in $\R((t))^{LE}$; this solution has
value $\geq vx^{1-r}$.
\end{theorem}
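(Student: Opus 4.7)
The plan is to apply Theorem~\ref{DVHLR} to the polynomial
\[f(X_0,\ldots,X_n)\>:=\>g(X_0,\ldots,X_n)+c-X_n\]
with $b=0$ and the given $e=\frac{1}{1-r}x^{1-r}$, working inside a spherically complete $K_i$ from the ascending chain large enough to contain $e$ together with all coefficients of $g$ and $c$. Since $vx<0$ and $r>1$, the prefactors $x^{-(n-i)r}$ and $x^{-r-n+1}$ all have positive value, so every coefficient of $g$ and of $c$ lies in ${\cal O}$; hence $f\in {\cal O}[X_0,\ldots,X_n]$. From $Dx=1$ one computes $De=x^{-r}$ and inductively $D^ke=(-r)(-r-1)\cdots(-r-k+2)\,x^{-r-k+1}$ for $k\geq 1$; in particular $vD^ne=v(x^{-r-n+1})$, and $(n-k)vDe=v(x^{-(n-k)r})$ for every $k$.

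Because every monomial of $g$ carries some $X_i$ as a factor by~(\ref{hform}), $g(0,\ldots,0)=0$, whence
\[vf^Db\>=\>vc\>\geq\>v(x^{-r-n+1})\>=\>vD^ne,\]
which is the hypothesis on $vf^Db$ in Theorem~\ref{DVHLR}. The $-X_n$ summand in $f$ forces $\frac{\partial f}{\partial X_n}(0,\ldots,0)\in -1+{\cal M}$, so this partial derivative has value $0$; hence the right-hand side of~(\ref{vderiv}) collapses to $(n-k)vDe=v(x^{-(n-k)r})$, where $k=\min\{j\mid i_j\neq 0\}$.

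The bulk of the work, which I expect to be the main obstacle, is the verification of~(\ref{vderiv}) itself. By the Taylor expansion~(\ref{Tayl1m}), $f^{[\,\ul{i}\,]}(0,\ldots,0)$ equals the coefficient of $X^{\ul{i}}$ in $f$. If $k<n$, then $X^{\ul{i}}$ involves $X_k$ but no $X_j$ with $j<k$; among the summands of~(\ref{hform}) only $x^{-(n-k)r}X_k\,{\cal O}[X_k,\ldots,X_n]$ can produce such a monomial (lower-index summands carry a factor $X_j$ with $j<k$, higher-index summands carry no $X_k$, and the $X_n^2{\cal O}$, $X_n{\cal M}$ pieces involve only $X_n$), so the coefficient has value $\geq v(x^{-(n-k)r})$ as required. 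If $k=n$ and $i_n\geq 2$ the coefficient sits in $X_n^2{\cal O}$, hence has value $\geq 0$. The case $\ul{i}=(0,\ldots,0,1)$ is the trivial baseline on which~(\ref{vderiv}) is an equality.

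With all hypotheses checked, Theorem~\ref{DVHLR}, applied inside $K_i$ (where the immediacy of $D^n$ on $e{\cal M}_i$ is precisely the fact recorded in the preceding paragraph of the paper), yields a unique $a\in {\cal M}_{K_i}$ with $f^Da=0$, equivalently $D^na=g^Da+c$, satisfying $v(a)\geq ve=v(x^{1-r})$. For uniqueness inside the full field $\R((t))^{LE}$, any infinitesimal solution lies in some $K_j\supseteq K_i$ from the chain, and $K_j$ is also spherically complete, so reapplying the theorem there forces that solution to coincide with $a$.
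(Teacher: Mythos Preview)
Your proof is correct and follows exactly the approach the paper sketches: apply Theorem~\ref{DVHLR} to $f=g+c-X_n$ with $b=0$ and $e=\frac{1}{1-r}x^{1-r}$ inside a sufficiently large spherically complete $K_i$, then pass to the union for uniqueness. You supply the explicit verification of~(\ref{vderiv}) via the monomial-by-monomial analysis of~(\ref{hform}), which the paper omits entirely.
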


This theorem implies the following result, which was
proved by Lou van den Dries in~[D]:
\begin{corollary}                           \label{VDD}
Suppose that $p$ is a polynomial in one variable with coefficients
in $\R((t))^{LE}$, all of value $\geq vt^r$ for some $r\in\R$, $r>1$.
Then the differential equation
\[Dy\>=\>p(y)\]
has a unique infinitesimal solution in $\R((t))^{LE}$.
\end{corollary}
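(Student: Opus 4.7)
The plan is to reduce Corollary~\ref{VDD} to Theorem~\ref{chform} with $n=1$ by splitting off the constant term of $p$. Write $p(Y) = \sum_{i=0}^{d} a_i Y^i$ with $va_i \geq vt^r$ for all $i$. Set $c := p(0) = a_0$ and
\[g(X_0,X_1) \;:=\; p(X_0) - p(0) \;=\; X_0\sum_{i=1}^{d} a_i X_0^{i-1}\,.\]
Then the equation $Dy = p(y)$ is exactly $Dy = g(y,Dy) + c$, which is the equation considered in Theorem~\ref{chform} for $n=1$.

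I next verify the hypotheses of Theorem~\ref{chform}. Recall that $x = t^{-1}$ in $\R((t))^{LE}$, so $vx = -vt$, and hence $vt^r = -r\,vx = vx^{-r}$. By assumption, $va_i \geq vx^{-r}$ for all $i$, that is, $a_i \in x^{-r}{\cal O}$. In particular
\[g(X_0,X_1) \;\in\; x^{-r} X_0\, {\cal O}[X_0],\]
which is contained in the first summand $x^{-r} X_0\,{\cal O}[X_0,X_1]$ of the right-hand side of~(\ref{hform}) for $n=1$. The other two summands $X_1^2{\cal O}$ and $X_1{\cal M}$ are not needed here since $g$ does not depend on $X_1$. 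For the constant, the hypothesis of Theorem~\ref{chform} with $n=1$ requires $c \in x^{-r-n+1}{\cal O} = x^{-r}{\cal O}$, and indeed $c = a_0 \in x^{-r}{\cal O}$ by the assumption on the coefficients.

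Applying Theorem~\ref{chform} therefore yields a unique infinitesimal solution $y \in \R((t))^{LE}$ of $Dy = p(y)$, with $vy \geq vx^{1-r}$. Since $r > 1$ and $vx < 0$, this value is positive, consistent with $y$ being infinitesimal.

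The ``obstacle'' here is really only notational bookkeeping: converting between $vt$ and $vx$ in the coefficient bound, and splitting off the constant $p(0)$ so that the residual polynomial lies in $X_0\,{\cal O}[X_0,X_1]$ as~(\ref{hform}) requires. Once this is done, the conclusion of Corollary~\ref{VDD} is an immediate specialization of Theorem~\ref{chform}.
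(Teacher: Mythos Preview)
Your proof is correct and follows exactly the route the paper indicates: the paper simply states that Theorem~\ref{chform} implies Corollary~\ref{VDD} without spelling out the reduction, and you have correctly filled in the details by taking $n=1$, setting $c=p(0)$ and $g(X_0,X_1)=p(X_0)-p(0)$, and verifying via $vt^r=vx^{-r}$ that both the shape condition~(\ref{hform}) and the condition $c\in x^{-r}{\cal O}$ hold.
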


%
%
\section{Sums of spherically complete valued abelian groups}
                                            \label{sectgp}
Let $({\cal A},v)$ be a valued abelian group and $A_1,\ldots,A_n$ be
subgroups of ${\cal A}$. The restrictions of $v$ to every $A_i$ will
again be denoted by $v$. We call the sum $A_1+\ldots+A_n\subseteq
{\cal A}\,$ \bfind{pseudo-direct} if for every $a'\in A_1+\ldots+A_n\,$,
$a'\ne 0$, there are $a_i\in A_i$ such that
\begin{equation}                            \label{psdir}
v\sum_{i=1}^{n}a_i\> =\>\min_{1\leq i\leq n} va_i\;
\mbox{\ \ and\ \ }\; v\left(a'-\sum_{i=1}^{n}a_i\right)\> >\>va'\;.
\end{equation}

\begin{proposition}                         \label{pdstc}
The sum $A_1+\ldots+A_n\subseteq {\cal A}$ is pseudo-direct if and only
if the group homomorphism $f:\>A_1\times\ldots\times A_n\rightarrow
A_1+\ldots+A_n$ defined by $f(a_1,\ldots, a_n):= a_1+\ldots+a_n$ is
immediate.
\end{proposition}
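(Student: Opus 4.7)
The plan is to apply Proposition~\ref{ATC} to translate the statement about immediacy of the homomorphism $f$ into the conditions (IH1) and (IH2), and then compare these directly with the definition of pseudo-directness. Note first that the product $A_1\times\ldots\times A_n$ is a valued abelian group under the minimum valuation $v(a_1,\ldots,a_n):=\min_i va_i$, and that $f$ is a group homomorphism with $f(0,\ldots,0)=0$, so Proposition~\ref{ATC} applies in both directions.

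For the direction ($\Rightarrow$), assume the sum is pseudo-direct and take any nonzero $a'\in A_1+\ldots+A_n$. Choose $a_1,\ldots,a_n$ satisfying (\ref{psdir}) and set $a:=(a_1,\ldots,a_n)$. Then (IH1) is literally the second half of (\ref{psdir}). For (IH2), observe that by the first half of (\ref{psdir}), $v'fa = v\sum_i a_i = \min_i va_i = va$; and for any $b=(b_1,\ldots,b_n)\in A_1\times\ldots\times A_n$ with $va\leq vb$, one has $v'fb = v\sum_i b_i \geq \min_i vb_i = vb \geq va = v'fa$, as required.

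For the direction ($\Leftarrow$), assume $f$ is immediate and take any nonzero $a'\in A_1+\ldots+A_n$. By Proposition~\ref{ATC} there exists $a=(a_1,\ldots,a_n)$ satisfying (IH1) and (IH2); (IH1) already gives the second inequality of (\ref{psdir}). To obtain the first equality $v\sum_i a_i = \min_i va_i$, i.e.\ $v'fa = va$, note that the inequality $v'fa\geq va$ is automatic from (V2). For the reverse, the ultrametric triangle law applied to (IH1) forces $v'fa = va'\ne\infty$, so $fa\ne 0$ and hence $a\ne 0$; pick $k$ with $va_k=\min_i va_i = va$, and define $b\in A_1\times\ldots\times A_n$ to have $k$-th coordinate $a_k$ and all other coordinates $0$. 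Then $vb=va_k=va$, so (IH2) gives $v'fa\leq v'fb = va_k = va$.

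The argument is essentially bookkeeping once Proposition~\ref{ATC} is in hand; the only mildly subtle step is realizing that condition (IH2), which at first sight only provides a \emph{lower} bound on $v'fb$ from a lower bound on $vb$, can be inverted to bound $v'fa$ from above by testing against the concentrated tuple $b=(0,\ldots,a_k,\ldots,0)$ that realizes the minimum in $va=\min_i va_i$. Everything else is a direct comparison of the definitions.
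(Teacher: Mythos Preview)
Your proof is correct and follows essentially the same approach as the paper's own proof: both directions rely on Proposition~\ref{ATC} to translate immediacy into conditions (IH1) and (IH2), and in the $\Leftarrow$ direction the key step of testing (IH2) against the concentrated tuple $b=(0,\ldots,a_k,\ldots,0)$ to force $vfa\leq\min_i va_i$ is exactly what the paper does. You are slightly more explicit in noting that $a\ne 0$ (via $v'fa=va'\ne\infty$), but otherwise the arguments coincide.
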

\begin{proof}
$\Rightarrow:$ \ Assume that the sum $A_1+\ldots+A_n$
is pseudo-direct. Take any $a'\in \sum_{i} A_i$
and choose $a_i\in A_i$ such that (\ref{psdir}) holds. Then $a:=
(a_1,\ldots,a_n)\in A_1\times\ldots\times A_n$ satisfies (IH1).
If $b=(b_1,\ldots,b_n)\in A_1\times\ldots\times A_n$ such that
$vb\geq va$, then
\[vfb\>=\>v\sum_{i} b_i\>\geq\>\min_i vb_i\>=\>vb\>\geq\>va\>=\>
\min_i va_i \>=\>v\sum_{i} a_i\>=\>vfa\;.\]
This shows that $a$ also satisfies (IH2).

\mn
$\Leftarrow:$ \ Assume that $f$ is immediate. Take any $a'\in \sum_{i}
A_i\,$, $a'\ne 0$. Choose $a:= (a_1,\ldots,a_n)\in A_1\times\ldots\times
A_n$ such that (IH1) and (IH2) hold. Then $v\left(a'-\sum_i
a_i\right)=v(a'-fa)>va'$. Now choose some $j$ such that $va_j=\min_i
va_i\,$. Then set $b_j= a_j\in A_j$ and $b_i=0\in A_i$ for $i\ne j$. For
$b=(b_1,\ldots,b_n)$, we thus have that $va=\min_i va_i=va_j=vb_j=\min_i
vb_i=vb$. Hence by (IH2), $v\sum_i a_i= vfa\leq vfb=vb_j=\min_i
va_i\,$. We have proved that the elements $a_i$ satisfy (\ref{psdir}).
\end{proof}

If the groups $(A_i,v)$ are spherically complete, then by
Proposition~\ref{prodsphc}, the same is true for their direct product
$A:=A_1\times\ldots\times A_n\,$, endowed with the minimum
valuation as defined in (\ref{minval}). Hence, the foregoing
proposition, Theorem~\ref{MT} and Corollary~\ref{scoap} show:

\begin{theorem}                             \label{addgr}
Assume that the subgroups $(A_i,v)$ of $({\cal A},v)$, $1\leq i\leq
n$, are spherically complete. If the sum $A_1+\ldots+A_n$
is pseudo-direct, then it is also spherically complete and has the
optimal approximation property.
\end{theorem}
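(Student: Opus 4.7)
The plan is to assemble Theorem~\ref{addgr} directly from three pieces already established in the paper, treating the addition map as the bridge between the direct product and the sum.

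First I would form the direct product $A := A_1\times\ldots\times A_n$ and equip it with the minimum valuation from Section~\ref{sectpd}. Since each $(A_i,v)$ is spherically complete by hypothesis, Proposition~\ref{prodsphc} (applied to the finite index set $\{1,\ldots,n\}$) immediately gives that $(A,v)$ is spherically complete. Next, I would introduce the group homomorphism
\[
f:\;A_1\times\ldots\times A_n\>\longrightarrow\>A_1+\ldots+A_n\,,\quad
(a_1,\ldots,a_n)\mapsto a_1+\ldots+a_n\,,
\]
which is the natural candidate: its image is precisely the sum, and its behavior with respect to the valuation is controlled by pseudo-directness.

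The central step is to invoke Proposition~\ref{pdstc}: the hypothesis that $A_1+\ldots+A_n$ is pseudo-direct translates exactly into the statement that $f$ is an immediate map of ultrametric spaces. With $(A,v)$ spherically complete and $f$ immediate, Theorem~\ref{MT} applies and yields that $f$ is surjective (which we already knew) and, crucially, that the target $(A_1+\ldots+A_n,v)$ is itself spherically complete.

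For the last assertion, I would view $A_1+\ldots+A_n$ as an ultrametric subspace of the ambient valued abelian group $(\mathcal{A},v)$. Being spherically complete as just shown, Corollary~\ref{scoap} immediately delivers the optimal approximation property in $\mathcal{A}$. I do not expect any real obstacle here; the only point that deserves a line of care is the translation between the group-theoretic notion of pseudo-directness and the ultrametric notion of immediacy, which is precisely the content of Proposition~\ref{pdstc} and has already been dispatched.
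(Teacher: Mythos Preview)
Your proposal is correct and follows exactly the paper's own proof: form the product, use Proposition~\ref{prodsphc} for its spherical completeness, apply Proposition~\ref{pdstc} to convert pseudo-directness into immediacy of the addition map, then invoke Theorem~\ref{MT} and Corollary~\ref{scoap}. There is nothing to add.
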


\bn\bn\bn
{\bf References}
\newenvironment{reference}%
{\begin{list}{}{\setlength{\labelwidth}{4em}\setlength{\labelsep}{0em}%
\setlength{\leftmargin}{3em}\setlength{\itemsep}{-1pt}%
\setlength{\baselineskip}{3pt}}}%
{\end{list}}
\newcommand{\lit}[1]{\item[{#1}\hfill]}
\begin{reference}
\lit{[D]} {van den Dries, L.$\,$: {\it Solutions of ODE's in
$\R((t))^{LE}$ as fixed points}, manuscript (November 1997)}
\lit{[DMM1]} {van den Dries, L.\ -- Macintyre, A.\ --
Marker, D.$\,$: {\it The elementary theory of restricted analytic
functions with exponentiation}, Annals Math.\ {\bf 140} (1994),
183--205}
\lit{[DMM2]} {van den Dries, L.\ -- Macintyre, A.\ --
Marker, D.$\,$: {\it Logarithmic-Exponen\-tial Power Series},
J.\ London Math.\ Soc.\ {\bf 56} (1997), 417--434}
\lit{[DMM3]} {van den Dries, L.\ -- Macintyre, A.\ -- Marker,
D.$\,$: {\it Logarithmic-Exponential Series}, Ann.\ Pure
Appl.\ Logic {\bf 111} (2001), 61--113}
\lit{[DS]} {van den Dries, L.\ -- Speissegger, P.$\,$: {\it The real
field with convergent generalized power series is model complete and
o-minimal}, Trans.\ Amer.\ Math.\ Soc.\ {\bf 350} (1998), 4377--4421}
\lit{[G]} {Gravett, K.~A.~H.$\,$: {\it Note on a result of Krull},
Cambridge Philos.\ Soc.\ Proc.\ {\bf 52} (1956), 379}
\lit{[KA]} {Kaplansky, I.$\,$: {\it Maximal fields with valuations I},
Duke Math.\ J.\ {\bf 9} (1942), 303--321}
\lit{[KR]} {Krull, W.$\,$: {\it Allgemeine Bewertungstheorie},
J.\ reine angew.\ Math.\ {\bf 167} (1931), 160--196}
\lit{[KU1]} {Kuhlmann, F.-V.: {\it A theorem on spherically complete
valued abelian groups}, The Fields Institute Preprint Series, Toronto
(June 1997)}
\lit{[KU2]} {Kuhlmann, F.-V.: {\it Elementary properties of power series
fields over finite fields}, J.\ Symb.\ Logic {\bf 66} (2001), 771-791}
\lit{[KU3]} {Kuhlmann, F.-V.$\,$: {\it Additive Polynomials and Their
Role in the Model Theory of Valued Fields}, in: Proceedings of
the Workshop and Conference on Logic, Algebra, and Arithmetic, Tehran,
Iran 2003, Lecture Notes in Logic, Vol.\ {\bf 13}, Association for
Symbolic Logic, AK Peters}
\lit{[KU4]} {Kuhlmann, F.-V.$\,$: Book in preparation. Preliminary
versions of several chapters available at:\n
{\tt http://math.usask.ca/$\,\tilde{ }\,$fvk/Fvkbook.htm}}
\lit{[L]} {Lang, S.$\,$: {\it Algebra}, Addison-Wesley, New York (1965)}
\lit{[P1]} {Prie{\ss}-Crampe, S.$\,$: {\it Angeordnete Strukturen.
Gruppen, K\"orper, projektive Ebenen}, Ergebnisse der Mathematik und
ihrer Grenzgebiete {\bf 98}, Springer (1983)}
\lit{[P2]} {Prie{\ss}-Crampe, S.$\,$: {\it Der Banachsche Fixpunktsatz
f\"ur ultrametrische R\"aume}, Results in Mathematics {\bf 18} (1990),
178--186}
\lit{[PR1]} {Prie{\ss}-Crampe, S.\ -- Ribenboim, P.$\,$: {\it Fixed
Points, Combs and Generalized Power Series},
Abh.\ Math.\ Sem.\ Hamburg {\bf 63} (1993), 227-244}
\lit{[PR2]} {Prie{\ss}-Crampe, S.\ -- Ribenboim, P.$\,$: {\it Fixed
Point and Attractor Theorems for Ultrametric Spaces}, Formum Math.\ {\bf
12} (2000), 53--64}
\lit{[PR3]} {Prie{\ss}-Crampe, S.\ -- Ribenboim, P.$\,$: {\it
Differential equations over valued fields (and more)},  J.\ Reine
Angew.\ Math.\ {\bf 576}  (2004), 123--147}
\lit{[RB]} {Robba, P.$\,$: {\it Lemmes de Hensel pour les op\'erateur
diff\'erentiels. Application a la r\'eduction formelle des \'equations
diff\'erentielles.}, Enseign.\ Math.\ (2) {\bf 26} (1980), 279--311}
\lit{[R1]} {Rosenlicht, M.$\,$: {\it Differential valuations},
Pacific J.\ Math.\ {\bf 86} (1980), 301--319}
\lit{[R2]} {Rosenlicht, M.$\,$: {\it On the value group of a
differential valuation}, Amer.\ J.\ Math.\ {\bf 191} (1979), 258--266}
\lit{[S1]} {Scanlon, T.$\,$: {\it Model theory of valued $D$-fields},
Ph.D.\ thesis, Harvard University (1997)}
\lit{[S2]} {Scanlon, T.$\,$: {\it A model complete theory of valued
D-fields}, J.\ Symb.\ Logic {\bf 65} (2000), 1758--1784}
\lit{[SCH]} {Sch\"orner, E.$\,$: {\it Ultrametric Fixed Point Theorems
and Applications}, in: Valuation Theory and its Applications, Volume II,
Proceedings of the Valuation Theory Conference Saskatoon 1999, eds.\
F.-V.\ Kuhlmann, S.\ Kuhlmann and M.\ Marshall, The Fields Institute
Communications Series {\bf 33}, Publications of the Amer.\ Math.\ Soc.\
(2003), 353--359}
\lit{[T]} {Teissier, B.$\,$: {\it Valuations, Deformations, and Toric
Geometry}, in: Valuation Theory and its Applications, Volume II,
Proceedings of the Valuation Theory Conference Saskatoon 1999, eds.\
F.-V.~Kuhlmann, S.~Kuhlmann and M.~Marshall, The Fields Institute
Communications Series {\bf 33}, Publications of the Amer.\ Math.\ Soc.\
(2003), 361--459}
\lit{[VC]} {Veluscek, D.\ -- Cimpric, J.$\,$: {\it Higher product
Pythagoras numbers of skew fields}, preprint. Available at:\n
{\tt http://www.fmf.uni-lj.si/srag/preprints/pitag$\underline{\ }$final1$\underline{\ }$popravki.ps}}
\end{reference}
\toradresse

\end{document}